\definecolor{mydarkblue}{RGB}{0,0,155}
\author{
  \textbf{Matthias Schötz}\thanks{Instytut Matematyczny PAN; ul. Śniadeckich 8, 00-656 Warsaw, Poland; \href{mailto:schotz@impan.pl}{schotz@impan.pl}}
}
\renewcommand{\mathbb}[1]{\mathbbm{#1}}
\newcommand{\refitem}[1] {\textit{\ref{#1}.)}}
\newtheorem{lemma}{Lemma}
\newtheorem{proposition}[lemma]{Proposition}
\newtheorem{theorem}[lemma]{Theorem}
\newtheorem{corollary}[lemma]{Corollary}
\newtheorem{definition}[lemma]{Definition}
\newtheorem{example}[lemma]{Example}
\newtheorem{remark}[lemma]{Remark}
\theoremstyle{nonumberplain}
\newtheorem{proof}{Proof}
\newcommand{\I}              {\mathrm{i}}
\newcommand{\id}             {\mathrm{id}}
\newcommand{\argument}       {\ignorespaces{\,\cdot\,}\ignorespaces}
\DeclarePairedDelimiter{\abs}{\lvert}{\rvert}
\DeclarePairedDelimiter{\norm}{\lVert}{\rVert}
\DeclarePairedDelimiter{\ordinaryIP}{\langle}{\rangle}
\DeclarePairedDelimiter{\ordinarySet}{\{}{\}}
\newcommand{\QQ}{\mathbb{Q}}
\newcommand{\RR}{\mathbb{R}}
\newcommand{\FF}{\mathbb{F}}
\newcommand{\NN}{\mathbb{N}}
\newcommand{\Unit}{\mathbb{1}}
\newcommand{\Loc}{\mathrm{Loc}}
\newcommand{\ZZ}{\mathbb{Z}}
\newcommand{\CC}{\mathbb{C}}
\newcommand{\skal}[3][]{\ordinaryIP[#1]{\,#2 \,#1|\, #3\,}}
\newcommand{\set}[3][]{\ordinarySet[#1]{\,#2 \;#1|\; #3\,}}
\newcommand{\genRing}[2][]{#1\langle\!#1\langle\,#2\,#1\rangle\!#1\rangle_{\textup{rg}}}
\newcommand{\hermitian}{\mathrm{h}}
\newcommand{\ring}[1]{\mathcal{#1}}
\newcommand{\loc}{\mathrm{loc}}
\DeclareMathOperator{\dom}{dom}
\newcommand{\Hilb}{\mathscr{H}}
\newcommand{\Adbar}{\mathcal{L}^*}
\newcommand{\A}{\ring{A}}
\newcommand{\R}{\ring{R}}
\renewcommand{\S}{\ring{S}}
\newcommand{\at}[2][]{#1|_{#2}}
\newcommand{\Open}{\mathrm{O}}
\newcommand{\Dom}{\mathcal{D}}
\newcommand{\AlEv}{\mathscr{C}_{\mathrm{a.e.\!}}}
\newcommand{\Stetig}{\mathscr{C}}
\newcommand{\bd}{\mathrm{bd}}
\newcommand{\LocDomains}{\mathcal{D}_{\mathrm{loc}}}
\newcommand{\KSpace}{\mathcal{K}}
\newcommand{\KSpaceFin}{\mathcal{K}_{\mathrm{fin}}}
\newcommand{\monoid}[1]{\mathrm{#1}}
\newcommand{\group}[1]{\mathrm{#1}}
\newcommand{\widehatFin}[1]{\widehat{#1}_{\mathrm{fin}}}
\title{%
Rings of almost everywhere defined functions%
}
\date{June 2024}
\begin{document}
\begin{onehalfspace}

\maketitle
\begin{abstract}
  The following representation theorem is proven: A partially ordered commutative ring $\R$ is a subring of a ring of almost everywhere
  defined continuous real-valued functions on a compact Hausdorff space $X$ if and only if $\R$ is archimedean and localizable.
  Here we assume that the positive cone of $\R$ is closed under multiplication and stable under multiplication with squares,
  but actually one of these assumptions implies the other.
  An almost everywhere defined function on $X$ is one that is defined on a dense open subset of $X.$
  A partially ordered commutative ring $\R$ is archimedean if the underlying additive partially ordered abelian group is archimedean,
  and $\R$ is localizable essentially if its order is compatible with the construction of a localization with sufficiently large, positive denominators.
  As applications we discuss the $\sigma$-bounded case, lattice-ordered commutative rings ($f$\=/rings), partially ordered fields, and commutative operator algebras.
\end{abstract} \ \\[-0.5cm]
{\small
	\textbf{2020 Mathematics Subject Classification:} 06F25, 13J25, 16G30, 47L60 \\
	\textbf{Keywords:} partially ordered commutative ring, representation theory, $f$-ring, real algebraic geometry, operator algebras
}

\allowdisplaybreaks

\section{Introduction}

Consider a commutative ring $\R,$ which we always assume to have a unit. For any field $\FF$ a question of general interest is whether or not $\R$ is isomorphic to
a subring of the ring $\FF^X$ of $\FF$\=/valued functions on a suitable set $X$ (with addition and multiplication defined pointwise).
For example, if $\FF$ is algebraically closed and $\R$ a finitely generated $\FF$-algebra (i.e.~a quotient of a polynomial ring over $\FF$), 
then Hilbert's Nullstellensatz \cite{hilbert:Nullstellensatz} provides the answer: $\R$ is isomorphic to a subring of $\FF^X$
for suitable $X$ if and only if $\R$ has no non-zero nilpotent elements.
If $\FF$ is the field of real numbers $\RR,$ then Hilbert's Nullstellensatz does not apply, but answers to the above question are provided e.g.~by the Gelfand--Naimark
theorem for commutative real $C^*$\=/algebras \cite[Lemma~1]{gelfand.naimark:ImbeddingofNormedRingsIntoRingOfOperators} and by the real Nullstellensatz \cite{risler:nullstellensatz}:
\begin{itemize}
  \item \label{item:GN} If $\R$ is a commutative real $C^*$\=/algebra with trivial $^*$\=/involution $r^* \coloneqq r$ for all $r\in \R,$ then $\R$ is isomorphic to the ring of $\RR$\=/valued continuous functions on a compact Hausdorff space $X.$
  \item \label{item:RN} If $\R$ is a finitely generated commutative real algebra and formally real (i.e.~$\sum_{j=1}^k r_j^2 = 0$ with $r_1, \dots, r_k \in \R$ implies 
    $r_1 = \dots = r_k = 0$),
    then $\R$ is isomorphic to the ring of polynomial functions on a real algebraic set.
\end{itemize}
However, it is not hard to show that there cannot be a general answer along the lines of the real Nullstellensatz --  
more precisely, whether or not $\R$ is isomorphic to
a subring of the ring $\RR^X$ on some suitable set $X$ cannot be decided by examining only finitely generated subrings of $\R$:

As an example take $\R = \RR(x),$ the field of rational functions in one variable $x.$ Every finitely generated
subring $\ring S \subseteq \RR(x)$ is isomorphic to a subring of $\RR^{\RR \setminus P}$ with $P\subseteq \RR$ the finite
set of poles of elements of $\ring S.$ Yet $\RR(x)$ itself cannot be isomorphic to any subring of
$\RR^X$ for any set $X$ because there does not even exist a single unital ring morphism $\varphi \colon \RR(x) \to \RR$
as this would lead to the contradiction $1 = \varphi\bigl((x-\varphi(x))^{-1}\bigr) \varphi\bigl(x-\varphi(x)\bigr) = 0.$

In contrast to this, a commutative ring $\R$ clearly is formally real if and only if every finitely generated subring of $\R$ is formally real.
To give just another elementary example, a ring is commutative if and only if all its finitely generated subrings are commutative.
Properties of this type are abundant, but they cannot characterize the rings of $\RR$-valued functions among all rings.

Note that rings of $\RR$\=/valued functions carry an additional structure, the partial order of pointwise comparison.
Because of this, questions of representability by $\RR$\=/valued functions should rather be examined for partially ordered commutative rings,
i.e.~for commutative rings endowed with a translation-invariant partial order $\le$ such that the positive cone $\R^+ \coloneqq \set{r\in \R}{0 \le r}$
is closed under multiplication and contains all squares. This, however, does not make a substantial difference:
whether or not a partially ordered commutative ring $\R$ is isomorphic to a subring of $\RR^X$ for some suitable set $X$
cannot be decided by examining only finitely generated subrings of $\R$ as demonstrated by the same example of $\RR(x),$ now also endowed with the partial
order of pointwise comparison almost everywhere. Any characterization of the rings of $\RR$-valued functions inside
all partially ordered commutative rings must therefore include a more involved condition which might feel less natural
(e.g.~existence of $\RR$-valued ring morphisms, or closedness of the positive cone in a topology with suitable properties,
see \cite[Cors.~3.4 and 4.23]{schoetz:gelfandNaimarkTheorems}, etc.).

The solution is to broaden the question and to consider representations
in the larger class of rings of almost everywhere defined continuous functions.
In this setting, the main Theorem~\ref{theorem:main} shows that for any partially ordered commutative ring $\R$ the following are equivalent:
\begin{itemize}
  \item There exists a compact Hausdorff space $X$ such that $\R$ is isomorphic (as a partially ordered commutative ring)
    to a subring of $\AlEv(X),$ the almost everywhere defined continuous $\RR$\=/valued functions on $X.$
  \item $\R$ is archimedean and strongly localizable.
\end{itemize}
In Proposition~\ref{proposition:fingenEnough} we also show that the properties of being archimedean or strongly localizable
can be checked by examining only finitely generated subrings.

A representation theorem of this type, i.e.~by almost everywhere defined continuous $\RR$\=/valued functions, has essentially already been
developed many decades ago for certain lattice-ordered rings, see \cite{henriksen.johnson:structureOfArchimedeanLatticeOrderedAlgebras}.
The formulation there is only slightly different, using continuous functions on a compact Hausdorff space $X$ with values
in the extended real line $\RR \cup \{-\infty,+\infty\}$ that are finite on an open dense subset of $X.$ This has the
disadvantage that such functions in general do not form a ring. Up to such technicalities, this classical result in the lattice-ordered
setting can essentially be obtained as a corollary of the main Theorem~\ref{theorem:main} here.
On the way we will also examine some weaker forms of representations, namely by equivalence classes of $\RR$\=/valued continuous functions
that coincide on some element of a given set of open (but not necessarily dense) subsets of a topological space.

This article is organized as follows: After discussing some preliminaries in the next Section~\ref{sec:preliminaries} we
define and examine the rings of almost everywhere defined continuous $\RR$\=/valued functions on a compact Hausdorff space in Section~\ref{sec:somewhereDefRings}.
In Sections~\ref{sec:characters} and \ref{sec:gelfand}
we introduce the main technical tools required in the proof of the main Theorem~\ref{theorem:main},
namely the space of extended positive characters of a partially ordered commutative ring
and the extended Gelfand transformation. Theorem~\ref{theorem:positivity} in Section~\ref{sec:gelfand},
characterizing the elements that have positive image under the extended Gelfand transformation, is also of independent interest.
In Section~\ref{sec:main} we then prove the main Theorem~\ref{theorem:main}, and in the final Section~\ref{sec:app} we discuss some applications:
The $\sigma$-bounded case, lattice-ordered rings ($f$-rings), partially ordered fields, and operator algebras (the first two of these
applications are known theorems, the last two seem to be new results).

\section{Preliminaries} \label{sec:preliminaries}

The natural numbers are denoted by $\NN \coloneqq \{1,2,3,\dots\}$ and $\NN_0 \coloneqq \NN \cup \{0\}.$ Similarly,
the rings of integers, rationals, real and complex numbers are $\ZZ,$ $\QQ,$ $\RR,$ and $\CC,$ respectively. 
We write $[x,y]$, ${[x,y[}$, ${]x,y]}$, and $]x,y[$ with $x,y\in \RR \cup \{-\infty,\infty\}$ for the closed, half-open, and open intervals, respectively.
A ring is always supposed to
have a multiplicative unit $1,$ and accordingly, a subring of a ring $\R$ is meant to contain the multiplicative unit of $\R.$
Next we recall some basic notions concerning partially ordered abelian groups and commutative rings:

A partially ordered abelian group is an abelian group $(\group G, +, 0)$ endowed with a translation-invariant partial order $\le,$
i.e.~$\le$ is a reflexive, transitive, and antisymmetric relation on $\group G$ and $g+f \le h+f$ holds for all $f,g,h\in \group G$ that fulfil $g\le h.$
In this case we write $\group G^+ \coloneqq \set{g\in \group G}{0 \le g}$ for the \emph{positive cone} of $\group G.$
This positive cone $\group G^+$ is a submonoid of $\group G$ such that $\group G^+ \cap (-\group G)^+ = \{0\},$ and two 
elements $g,h \in \group G$ fulfil $g \le h$ if and only if $h-g \in \group G^+.$
Conversely, given an abelian group $(\group G, +, 0),$ then any submonoid $M$ of $\group G$ satisfying $M \cap (-M) = \{0\}$
induces a unique translation-invariant partial order $\le$ on $\group G$ such that $\group G^+ = M.$ 

A partially ordered abelian group $(\group G, +, 0)$ is called \emph{archimedean} if the following holds:
Whenever two elements $g,h \in \group G$ fulfil $kg + h \in \group G^+$ for all $k\in \NN,$ then $g \in \group G^+.$
This archimedean property is crucial in the theory of partially ordered abelian groups, see e.g.~\cite{goodearl:partiallyOrderedAbelianGroupsWithInterpolation}.
We will call a commutative ring endowed with a translation-invariant partial order archimedean if the underlying partially ordered abelian group is archimedean.
This should not be confused with the unrelated notion of archimedean preorderings (preprimes / semirings, quadratic modules, ...)
of commutative rings in real algebraic geometry. For any submonoid $\monoid M$ of an abelian group $(\group G,+,0)$ we write
\begin{equation}
  \label{eq:ddagger}
  M^\ddagger \coloneqq \set[\big]{g\in \group{G}}{\textup{there is $h\in \group{G}$ such that $k g+h \in \monoid M$ for all $k \in \NN$}}
  .
\end{equation}
It is easy to check that $M^\ddagger$ is again a submonoid of $\group{G}$ and $M \subseteq M^\ddagger.$
The operation $\argument^\ddagger$ has been examined in various contexts before, mostly for convex cones of real vector
spaces where $\argument^\ddagger$ is a sequential closure as in 
\cite{cimpric.marshall.netzer:closuresOfQuadraticModules, kuhlmann.marshall:positivitySOSandMultidimensionalMomentProblem, netzer:positivePolynomialsAndSequentialClosuresOfQuadraticModules, schmuedgen.schoetz:positivstellensaetzForSemirings}.
In \cite{cimpric.marshall.netzer:closuresOfQuadraticModules} one also finds a counterexample showing that in general
$({\monoid M}^\ddagger)^\ddagger \supsetneq {\monoid M}^\ddagger.$ A partially ordered abelian group $\group{G}$ is archimedean
if and only if $(\group{G}^+)^\ddagger = \group{G}^+.$

Let $\R$ be a commutative ring, then a \emph{preordering} of $\R$ is a subset
$T$ of $\R$ that is closed under addition and multiplication and such that $r^2 \in T$ for all $r\in \R.$
A \emph{partially ordered commutative ring} is a commutative ring $\R$ endowed with a translation-invariant order $\le$ on the underlying additive group
such that $\R^+$ is a preordering of $\R.$ In this case $rt \le st$ holds for all $r,s\in \ring R$ that fulfil $r\le s$ and all $t \in \R^+.$
All non-trivial partially ordered commutative rings fulfil $1 \in \R^+$ but $-1 \notin \R^+$, and in particular they have characteristic $0.$
Any subring $\S$ of a partially ordered commutative ring $\R$ automatically becomes a partially ordered commutative ring by endowing $\S$ with
the relative order inherited from $\R.$

Now consider two partially ordered commutative rings $\R$ and $\S$ and a ring morphism $\Phi \colon \R \to \S.$ We say that
$\Phi$ is \emph{positive} if $\Phi(r) \in \S^+$ for all $r\in \R^+,$ i.e.~$\Phi(\R^+) \subseteq \S^+.$ 
A positive ring morphism $\Phi \colon \R \to \S$ is called an \emph{order embedding} if conversely also $\Phi^{-1}(\S^+) \subseteq \R^+$
holds (hence $\Phi^{-1}(\S^+) = \R^+$ by positivity of $\Phi$). In this case $\Phi$ is automatically injective.
An isomorphism of the partially ordered commutative rings $\R$ and $\S$
is an isomorphism $\Phi$ of the underlying rings such that $\Phi$ and its inverse $\Phi^{-1}$ are positive.
Equivalently, a positive ring morphism $\Phi \colon \R \to \S$ is an isomorphism of partially ordered commutative rings
if and only if $\Phi$ is a surjective order embedding.

We are interested in partially ordered commutative rings that fulfil an additional assumption:

\begin{definition} \label{def:localizability}
  Let $\R$ be a commutative ring endowed with a translation-invariant partial order $\le$ on the underlying additive group.
  An element $s\in 1+\R^+ \coloneqq \set{1+t}{t\in \R^+}$ is called \emph{localizable} if the following holds:
  Whenever an element $r\in \R$ fulfils $rs \in \R^+,$ then $r\in \R^+.$ We write $\Loc(\R)$ for the set of all localizable elements in $1+\R^+$
  and we say that $\R$ itself is \emph{localizable} if for all $r\in \R$ there exists $s\in \Loc(\R)$ such that $-s \le r \le s.$
  Moreover, we say that $\R$ is \emph{strongly localizable} if $r^2 \in \R^+$ for all $r\in \R$ and $\Loc(\R) = 1+\R^+.$
\end{definition}
Clearly $1 \in \Loc(\R)$ and $\Loc(\R)$ is closed under multiplication.
We also make the following observation: If $\R$ is localizable, then $\Loc(\R) \subseteq \R^+.$ Indeed, if $\R$ is localizable,
then there exists at least one element $s\in \Loc(\R)$ that fulfils $-s \le 0 \le s,$ in particular $s\in \R^+,$
therefore $1\in \R^+$ by localizability of $s,$ and consequently $\Loc(\R) \subseteq 1+\R^+ \subseteq \R^+.$

Localizability of elements of rings with a partial order has been introduced in \cite{schmuedgen.schoetz:positivstellensaetzForSemirings}
but has implicitly been present in real algebraic geometry for a long time, e.g.~in the denominator of the strict Positivstellensatz.
In Section~\ref{sec:characters} we will see that the localization $\ring{R}_\loc$ of a partially ordered commutative ring $\ring{R}$ over $\Loc(\R)$
can again be turned into a partially ordered commutative ring, and if $\ring{R}$ is localizable, then
$\ring{R}_\loc$ has many uniformly bounded elements. Localizability is also the key to multiplicativity of extremal
positive functionals, see \cite[Thm.~4.20]{schoetz:gelfandNaimarkTheorems}, and to automatic associativity and commutativity
of the multiplication of partially ordered (extended) rings, see \cite{schoetz:arxivAssociativityAndCommutativityOfPartiallyOrderedRings}.
As strong localizability requires squares to be positive, it indeed implies localizability as the name suggests:

\begin{proposition} \label{proposition:weakstronloc}
  Let $\R$ be a commutative ring endowed with a translation-invariant partial order $\le$ on the underlying additive group
  and assume that $\R$ is strongly localizable, then $\R$ is localizable.
\end{proposition}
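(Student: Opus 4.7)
The plan is to exhibit, for each $r \in \R$, a concrete candidate $s \in \Loc(\R)$ satisfying $-s \le r \le s$, namely $s \coloneqq 1 + r^2$.

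First I would verify that $s$ lies in $\Loc(\R)$. By strong localizability $r^2 \in \R^+$, so $s = 1 + r^2 \in 1 + \R^+$. Strong localizability gives $\Loc(\R) = 1 + \R^+$, hence $s \in \Loc(\R)$. In particular, taking $r = 1$ one also sees $1 = 1^2 \in \R^+$, so $2 = 1 + 1 \in 1 + \R^+ = \Loc(\R)$; this fact will be used in the next step.

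Next I would show that $s \pm r \in \R^+$. The key algebraic identities are
\begin{equation*}
  2(s - r) = 1 + r^2 + (r-1)^2
  \qquad\text{and}\qquad
  2(s + r) = 1 + r^2 + (r+1)^2 ,
\end{equation*}
each a sum of three squares. Since $\R^+$ contains all squares and is closed under addition (being a submonoid of $(\R,+,0)$), we obtain $(s - r) \cdot 2 \in \R^+$ and $(s + r) \cdot 2 \in \R^+$.

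The one subtlety, and the reason the hypothesis is actually used, is that in a general partially ordered ring one cannot simply divide by $2$. Here we invoke the definition of localizability of the element $2$: from $2 \in \Loc(\R)$ and $(s-r)\cdot 2 \in \R^+$ one concludes $s - r \in \R^+$, and analogously $s + r \in \R^+$. This gives $-s \le r \le s$, completing the verification that $\R$ is localizable. The only non-automatic point is precisely this cancellation of the factor $2$, which is exactly the content of $2 \in \Loc(\R)$ provided by strong localizability.
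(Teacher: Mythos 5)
Your proof is correct and follows essentially the same route as the paper: write $2(s\pm r)$ as a sum of squares and cancel the factor $2$ using $2 = 1+1^2 \in 1+\R^+ = \Loc(\R)$, then note $s \in 1+\R^+ = \Loc(\R)$. The only difference is your slightly leaner choice $s = 1+r^2$ in place of the paper's $s = (1+r)^2+(1-r)^2 = 2+2r^2$, which changes nothing of substance.
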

\begin{proof}
  Consider an element $r\in \R.$ Then $2\bigl( (1+r)^2 - r \bigr) = (1+r)^2 + 1^2 + r^2 \in \R^+$ holds, and therefore
  $(1+r)^2 - r \in \R^+$ because $2 = 1+1^2 \in 1+\R^+ \subseteq \Loc(\R).$
  This shows that $r\le (1+r)^2,$ and, by substituting $r$ with $-r,$ $-r \le (1-r)^2.$
  So set $s \coloneqq (1+r)^2 + (1-r)^2 = 1+1^2+2r^2 \in 1+\R^+,$ then $-s \le r \le s$ and $s \in \Loc(\R)$ by strong localizability.
\end{proof}
In the archimedean case, the converse is often also true as we will see in the main Theorem~\ref{theorem:main}.
This has also already been observed in \cite[Thm.~32]{schoetz:arxivAssociativityAndCommutativityOfPartiallyOrderedRings}.

Definition~\ref{def:localizability} was formulated in a setting more general than partially ordered commutative rings 
because of the following two technical propositions inspired by \cite[Thm.~3.2 and Prop.~3.5]{schmuedgen.schoetz:positivstellensaetzForSemirings}.
These show that some axioms of localizable archimedean partially ordered commutative rings are redundant.
This is not necessary for the main Theorem~\ref{theorem:main}, but it will later on be helpful
for the construction of examples in the last section~\ref{sec:app}. As a preparation we note:

\begin{lemma} \label{lemma:archLoc}
  Let $\R$ be a commutative ring endowed with a translation-invariant partial order $\le$ on the underlying additive group.
  Assume also that $\R$ is archimedean and localizable. Then $\NN \subseteq \Loc(\R).$
\end{lemma}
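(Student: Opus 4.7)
The plan is to check directly that each $n\in\NN$ satisfies the two conditions of Definition~\ref{def:localizability}: membership in $1+\R^+$, and the implication $rn\in\R^+\Rightarrow r\in\R^+$.

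First I would record that localizability of $\R$ forces $1\in\R^+$. Indeed, as noted just after the definition, there exists at least one $s\in\Loc(\R)$ with $-s\le 0\le s$, so $s\in\R^+$, and applying the defining property of $s$ to the inequality $1\cdot s\in\R^+$ yields $1\in\R^+$. Consequently $\NN\subseteq\R^+$ and in fact $\NN\subseteq 1+\R^+$, which is the first condition.

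Next, fix $n\in\NN$ and suppose $r\in\R$ satisfies $rn\in\R^+$. Using localizability of $\R$, pick $s\in\Loc(\R)\subseteq\R^+$ with $-s\le r\le s$. The key step is to produce, for every $k\in\NN$, an estimate of the form $kr + h\in\R^+$ with $h$ independent of $k$, so that the archimedean property delivers $r\in\R^+$. For this I would perform division with remainder: write $k = qn+j$ with $q\in\NN_0$ and $0\le j\le n-1$. Then
\[
  kr + ns \;=\; q\,(rn) \;+\; j\,r \;+\; ns \;=\; q\,(rn) \;+\; j\,(r+s) \;+\; (n-j)\,s,
\]
and each of the three summands on the right lies in $\R^+$: the first because $rn\in\R^+$ and $\R^+$ is an additive submonoid, the second because $r+s\in\R^+$ and $j\in\NN_0$, the third because $s\in\R^+$ and $n-j\in\NN_0$. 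Thus $kr + ns\in\R^+$ for all $k\in\NN$, and the archimedean property (with $h\coloneqq ns$) yields $r\in\R^+$. Hence $n\in\Loc(\R)$.

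I do not expect any real obstacle: the only subtlety is to realize that one cannot simply use $rn\in\R^+$ to conclude something about $kr$ when $n\nmid k$, and to compensate for the missing multiple of $n$ by absorbing it into the $k$-independent bound $ns$ via the sandwich $-s\le r\le s$. Everything else is monoid arithmetic in $\R^+$ together with the two defining properties of archimedeanness and localizability.
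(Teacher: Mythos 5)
Your proof is correct and takes essentially the same route as the paper's: establish $1\in\R^+$ from localizability, then use division with remainder $k=qn+j$ to get a $k$-independent bound $kr+h\in\R^+$ and conclude by the archimedean property. The only (inessential) difference is that you take $h=ns$ from a single localizable bound $-s\le r\le s$, whereas the paper picks a separate bound $s_m$ for each remainder term $mr$, $m\in\{1,\dots,n-1\}$, and uses $h=\sum_m s_m$; your variant is a slight simplification of the same argument.
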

\begin{proof}
  Recall that $1 \in \Loc(\R) \subseteq \R^+$ by localizability and therefore $\NN \subseteq 1+\R^+.$
  Consider $n\in \NN$ and let $r\in \R$ be given such that $nr \in \R^+.$ For all $m\in \{1,\dots,n-1\}$ there exists
  $s_m \in \Loc(\R) \subseteq \R^+$ such that $-s_m \le mr \le s_m.$ Set $\hat s \coloneqq \sum_{m=1}^{n-1} s_m,$ then $0 \le mr + s_m \le mr + \hat s$
  for all $m\in \{1,\dots,n-1\}.$ It follows that $\ell nr + mr + \hat s \in \R^+$
  for all $\ell \in \NN_0$ and all $m\in \{1,\dots,n-1\},$ so $kr + \hat s \in \R^+$ for all $k\in \NN,$
  and therefore $r \in \R^+$ because $\R$ is archimedean.
\end{proof}

\begin{proposition} \label{proposition:semiring}
  Let $\R$ be a commutative ring endowed with a translation-invariant partial order $\le$ on the underlying additive group
  such that $\R^+$ is closed under multiplication. If $\R$ is archimedean and localizable, then $r^2 \in \R^+$ for all 
  $r\in \R$ so that $\R$ in particular is a partially ordered commutative ring.
\end{proposition}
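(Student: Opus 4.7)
The plan is to fix $r \in \R$ and use localizability to choose $s \in \Loc(\R) \subseteq 1 + \R^+$ with $-s \le r \le s$, so that $s + r$ and $s - r$ both lie in $\R^+$. Two routine closure facts I would establish at the outset: by Lemma~\ref{lemma:archLoc}, $\NN \subseteq \Loc(\R)$; and $\Loc(\R)$ is closed under multiplication (if $t_1 = 1 + a_1$ and $t_2 = 1 + a_2$ with $a_1, a_2 \in \R^+$, then $t_1 t_2 - 1 = a_1 + a_2 + a_1 a_2 \in \R^+$ since $\R^+$ is closed under multiplication, and the cancellation property for $t_1 t_2$ is obtained by applying the cancellation properties of $t_2$ and then $t_1$). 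In particular $2, s, 2s, s^2$ all lie in $\Loc(\R)$.

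Next I would extract three elementary positive identities from $\NN_0$-linear combinations of products of $s + r$ and $s - r$, all of which belong to $\R^+$ because $\R^+$ is closed under both addition and multiplication. Cancelling $2 \in \Loc(\R)$ in $(s+r)^2 + (s-r)^2 = 2(s^2 + r^2)$ gives $s^2 + r^2 \in \R^+$; cancelling $2s \in \Loc(\R)$ in $(s+r)^3 + (s-r)^3 = 2s(s^2 + 3r^2)$ gives $s^2 + 3r^2 \in \R^+$; and $(s+r)(s-r) = s^2 - r^2 \in \R^+$. A crucial observation is that the pair $(s^2, r^2)$ is now in exactly the same configuration that $(s, r)$ was at the start: $s^2 \in \Loc(\R)$ and $-s^2 \le r^2 \le s^2$. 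The whole derivation may therefore be iterated to produce $s^{2^k} + 3\, r^{2^k} \in \R^+$ for every $k \ge 1$.

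The final step is to deduce $r^2 \in \R^+$ from the archimedean hypothesis, for which it suffices to exhibit a single $h \in \R$ with $k r^2 + h \in \R^+$ for every $k \in \NN$. With the choice $h = s^2$, the cases $k \in \{1, 2, 3\}$ are immediate (the case $k = 2$ being obtained by averaging $s^2 + r^2$ and $s^2 + 3r^2$ and cancelling $2 \in \Loc(\R)$). The main obstacle is to promote $kr^2 + s^2 \in \R^+$ to all $k \ge 4$, and I expect this to be accomplished by combining the iterated inequalities $s^{2^k} + 3\, r^{2^k} \in \R^+$ with the higher-order odd-power identities $(s+r)^{2m+1} + (s-r)^{2m+1} = 2s \sum_{j=0}^{m} \binom{2m+1}{2j} s^{2m-2j} r^{2j} \in \R^+$; alternatively, it would be enough to produce a single $N \in \NN$ with $Nr^2 \in \R^+$ and then invoke Lemma~\ref{lemma:archLoc} to cancel the $N$. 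This archimedean promotion is where the subtlety of the proof lies.
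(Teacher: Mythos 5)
Your setup matches the paper's: choose $s\in\Loc(\R)$ with $-s\le r\le s$, note $\NN\subseteq\Loc(\R)$ by Lemma~\ref{lemma:archLoc} and that $\Loc(\R)$ is closed under multiplication, and manufacture elements of $\R^+$ from products of $s+r$ and $s-r$. The low-degree identities you extract ($s^2+r^2$, $s^2+3r^2$, $s^2-r^2\in\R^+$) are correct. But the proof is not complete: the entire difficulty is to produce, for a \emph{single fixed} $h$ (you propose $h=s^2$), elements $kr^2+h\in\R^+$ for \emph{all} $k\in\NN$, and this step is exactly what your proposal leaves as an expectation. The two tools you name do not evidently deliver it. Iterating the derivation with the pair $(s^2,r^2)$ yields $s^{2^k}+3\,r^{2^k}\in\R^+$, which raises the power of $r$ rather than the coefficient of $r^2$, and there is no order-theoretic way to descend from $r^{2^k}$ back to $r^2$ (that would amount to a square-root argument of the kind needed later in Proposition~\ref{proposition:sqrt}). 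The odd-power sums fare no better: after cancelling $2s$ they give $\sum_{j=0}^m\binom{2m+1}{2j}s^{2m-2j}r^{2j}\in\R^+$, and to isolate the $r^2$-term you must neutralize the unknown-sign terms $r^{2j}$, $j\ge 2$, by adding elements such as $\binom{2m+1}{2j}s^{2m-2j}(s^{2j}-r^{2j})$ (each of which itself needs a separate induction to be shown positive). The cost of doing so inflates the $s^{2m}$-coefficient by roughly $\sum_{j\ge2}\binom{2m+1}{2j}\sim 4^m$, while the $r^2$-coefficient is only $\binom{2m+1}{2}\sim 2m^2$; after cancelling $s^{2m-2}$ the ratio of the $r^2$-coefficient to the fixed term stays bounded (it even degrades: $m=1$ gives $s^2+3r^2$, $m=2$ only $3s^2+5r^2$), so the archimedean hypothesis is never triggered. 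Your fallback, ``find $N$ with $Nr^2\in\R^+$ and cancel $N\in\Loc(\R)$,'' is circular: since $N$ is localizable, $Nr^2\in\R^+$ is equivalent to the goal $r^2\in\R^+$.

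The paper closes this gap with a specific weighted identity your outline does not contain: for $k\ge 2$,
\begin{equation*}
  \sum_{\ell=0}^{k}\binom{k}{\ell}(k-2\ell)^2(s+r)^{k-\ell}(s-r)^{\ell}
  \;=\;
  4k(2s)^{k-2}\bigl(s^2+(k-1)r^2\bigr),
\end{equation*}
a binomial mean-square (variance) computation. The left-hand side lies in $\R^+$ because $\R^+$ is closed under multiplication, and cancelling the localizable factor $4k(2s)^{k-2}$ gives $s^2+(k-1)r^2\in\R^+$ for every $k$, i.e.\ precisely the family with fixed $s^2$ and unbounded coefficient on $r^2$ that the archimedean property needs. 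Without this (or some equivalent mechanism for making the $r^2$-coefficient grow relative to a fixed element), your argument stops short of the conclusion.
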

\begin{proof}
  Assume that $\R$ is archimedean and localizable, then $\NN \subseteq \Loc(\R)$ by the previous Lemma~\ref{lemma:archLoc}.
  Let any $r\in \R$ be given, then there is $s\in \Loc(\R)$ such that $-s \le r \le s.$ For $k\in \NN \setminus \{1\}$ define
  \begin{equation*}
    S_k \coloneqq \sum_{\ell=0}^k \binom{k}{\ell} (k-2\ell)^2 (s+r)^{k-\ell} (s-r)^\ell,
  \end{equation*}
  then $S_k \in \R^+$ because $s+r,s-r \in \R^+$ and because $\R^+$ is closed under multiplication by assumption. 
  A short calculation analogous to \cite[Lemma~3.1]{schmuedgen.schoetz:positivstellensaetzForSemirings} shows that
  $S_k = 4 k (2s)^{k-2} s^2 + 4 k (k-1) (2s)^{k-2} r^2$. For convenience of the reader, we give the details here:
  Expanding $(k-2\ell)^2 = k^2 - 4 \ell (k-1) + 4\ell(\ell-1)$ and applying the binomial theorem shows that
  \begin{align*}
    S_k
    &=
    k^2 (2s)^k  - 4 (k-1) \sum_{\ell=0}^k \binom{k}{\ell} \ell (s+r)^{k-\ell} (s-r)^\ell + 4 \sum_{\ell=0}^k \binom{k}{\ell} \ell(\ell-1) (s+r)^{k-\ell} (s-r)^\ell
    \\
    &=
    k^2 (2s)^k  - 4 k (k-1) \sum_{\ell=1}^k \binom{k-1}{\ell-1} (s+r)^{k-\ell} (s-r)^\ell + 4 k(k-1) \sum_{\ell=2}^k \binom{k-2}{\ell-2} (s+r)^{k-\ell} (s-r)^\ell
    \\
    &=
    k^2 (2s)^k - 4 k (k-1) \sum_{\ell=0}^{k-1} \binom{k}{\ell} (s+r)^{k -1 - \ell} (s-r)^{\ell+1} + 4 k(k-1) \sum_{\ell=0}^{k-2} \binom{k}{\ell} (s+r)^{k-2-\ell} (s-r)^{\ell+2}
    \\
    &=
    k^2 (2s)^k - 4 k (k-1) (2s)^{k-1} (s-r) + 4 k(k-1) (2s)^{k-2} (s-r)^2
    \\
    &=
    k^2 (2s)^k - 4 k (k-1) (2s)^{k-2} \bigl( s^2 - r^2 \bigr)
    \\
    &=
    4 k (2s)^{k-2} s^2 + 4 k (k-1) (2s)^{k-2} r^2
    .
  \end{align*}
  It follows that $s^2 + (k-1) r^2 \in \R^+$ because $\NN \subseteq \Loc(\R)$ and $s\in \Loc(\R).$
  This shows that $s^2 + k r^2 \in \R^+$ for all $k\in \NN$ and therefore $r^2 \in \R^+$ because $\R$ is archimedean.
\end{proof}

\begin{proposition} \label{proposition:sqrt}
  Let $\R$ be a commutative ring endowed with a translation-invariant partial order $\le$ on the underlying additive group
  such that $\R^+$ is stable under multiplication with squares, i.e.~$r^2s \in \R^+$ for all $r\in \R,$ $s\in \R^+.$
  If $\R$ is archimedean and localizable, then $pq\in \R^+$ for all $p,q\in \R^+$ so that $\R$ in particular is a partially ordered commutative ring.
\end{proposition}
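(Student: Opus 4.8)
The plan is to follow the template of Proposition~\ref{proposition:semiring}, reducing the claim through the archimedean property to a $k$-indexed family of inequalities that can be divided by a localizable factor. Two preliminaries are immediate. Since $\R$ is localizable we have $1 \in \R^+$, so applying the hypothesis with the positive element $1$ gives $r^2 = r^2 \cdot 1 \in \R^+$ for every $r \in \R$; hence all squares are positive and only $pq \in \R^+$ for $p, q \in \R^+$ remains to be shown (this also yields the final assertion that $\R$ is a partially ordered commutative ring). Furthermore $\NN \subseteq \Loc(\R)$ by Lemma~\ref{lemma:archLoc}.

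The one genuinely new tool, used throughout, is the observation that $cv \in \R^+$ whenever $c \in \Loc(\R)$ and $v \in \R^+$: indeed $(cv)\,c = c^2 v \in \R^+$ because $c^2$ is a square and $v \in \R^+$, and localizability of $c$ then forces $cv \in \R^+$. Since $\Loc(\R)$ is closed under multiplication and contained in $1 + \R^+ \subseteq \R^+$, this lets me build a common localizable bound: if $a_1, a_2 \in \Loc(\R)$ with $p \le a_1$ and $q \le a_2$, then $a \coloneqq a_1 a_2 \in \Loc(\R)$ satisfies $0 \le p \le a$ and $0 \le q \le a$, because $a - a_1 = a_1(a_2 - 1) \in \R^+$ and symmetrically. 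From the hypothesis I also have directly that $p^2 q, pq^2 \in \R^+$, hence $(p+q)\,pq \in \R^+$ and, multiplying by squares, $(p+q)^{2n+1}\,pq \in \R^+$ for all $n$.

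With such an $a \in \Loc(\R)$ bounding $p$ and $q$, so that $a \pm p \in \R^+$, the computation of Proposition~\ref{proposition:semiring} rests on the polynomial identity
\[
  \sum_{\ell=0}^{k} \binom{k}{\ell}(k-2\ell)^2 (a+p)^{k-\ell}(a-p)^{\ell} = 4k(2a)^{k-2}\bigl(a^2 + (k-1)p^2\bigr)
\]
together with its first-moment analogue $\sum_{\ell=0}^{k} \binom{k}{\ell}(k-2\ell)(a+p)^{k-\ell}(a-p)^{\ell} = 2k(2a)^{k-1}p$. I would try to assemble, for each odd $k$, an expression built from these identities (after multiplication by the positive element $q$) that on the one hand is a genuine sum of terms $(\text{square})\cdot(\text{positive})$ and $(\text{localizable})\cdot(\text{positive})$, hence lies in $\R^+$, and on the other hand equals a localizable multiple of $h + (k-1)\,pq$ for a fixed $h \in \R$. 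Dividing by that localizable factor and letting $k$ range, the archimedean property would then deliver $pq \in \R^+$.

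The main obstacle is precisely the appearance of the bare product $pq$. Producing $pq$ rather than the square $p^2$ forces the first-moment weight $(k-2\ell)$, whose sign changes with $\ell$, so the relevant sum is not termwise positive; and multiplying a manifestly positive sum by $q$ only reproduces $p^2 q$, never the cross term. Indeed the whole statement is equivalent to the implication $-w \le z \le w,\ w \in \R^+ \Rightarrow w^2 - z^2 \in \R^+$ (take $w = p+q$, $z = p-q$), and every naive manipulation of $w^2 - z^2 = (w-z)(w+z)$ reduces positivity of a product of two positive elements back to the very statement to be proved. Bridging this gap---upgrading ``squares times positives are positive'' to ``products of positives are positive''---is where the archimedean hypothesis must enter in an essential way, and arranging the first-moment terms into a manifestly positive expression with fixed junk (so that the archimedean step applies) is the technical heart of the argument.
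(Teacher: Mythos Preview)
Your proposal is not a proof but a plan with an acknowledged gap. You correctly establish the preliminaries (squares are positive, $\NN\subseteq\Loc(\R)$, and the useful observation that $cv\in\R^+$ whenever $c\in\Loc(\R)$ and $v\in\R^+$), and the impulse to mimic Proposition~\ref{proposition:semiring} is natural. But you then concede that the first-moment sum $\sum_\ell\binom{k}{\ell}(k-2\ell)(a+p)^{k-\ell}(a-p)^\ell\, q$ is not termwise positive because the weight $(k-2\ell)$ changes sign, and you offer no mechanism to repair this. The final paragraph names the difficulty rather than resolving it; ``I would try to assemble'' is not an argument, and nothing in the proposal indicates how such an assembly would go.

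The paper's route is entirely different and does not use binomial moments at all. It \emph{approximates a square root of $p$}: pick $s\in\Loc(\R)$ with $p\le s$, set $s_k\coloneqq s^{2^k}$, and define recursively $r_0\coloneqq p$, $r_{k+1}\coloneqq 2s_kr_k-r_k^2$. The key identity $s_{k+1}-r_{k+1}=(s_k-r_k)^2$, together with the hypothesis that squares times positives are positive, yields $0\le r_k\le s_k$ for all $k$. A telescoping computation with the manifestly positive sum $S_\ell \coloneqq \sum_{k=0}^{\ell-1}2^{\ell-k}\bigl(\prod_{j=k}^{\ell-1}s_j^2\bigr)r_k^2\, q$ collapses, after cancelling the localizable factors $s_j$, to $2^{\ell+1}pq + s(1+q^2)\in\R^+$ for every $\ell\in\NN$; the archimedean property (plus $\NN\subseteq\Loc(\R)$) then gives $pq\in\R^+$. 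Throughout, $q$ enters only linearly and is always multiplied by squares or by the already-established positive elements $r_\ell$ and $s_\ell-r_\ell$; the cross term $pq$ never has to be extracted from a signed moment identity, which is exactly the obstruction your approach runs into.
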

\begin{proof}
  Assume that $\R$ is archimedean and localizable, then $\NN \subseteq \Loc(\R)$ by Lemma~\ref{lemma:archLoc}.
  Recall also that $\Loc(\R) \subseteq \R^+$ so that $1 \in \R^+$ and therefore $r^2 \in \R^+$ for all $r\in \R$ by assumption.
  Let $p,q\in \R^+$ be given. In order to show that $pq \in \R^+$ we essentially approximate a square root of $p.$
  By localizability of $\R$ there exists $s\in \Loc(\R)$ such that $p \le s.$ We will use the shorthand
  $s_k \coloneqq s^{(2^k)},$ so $s_0 = s,$ and $s_{k+1} = s_k^2$ and $s_k \in \Loc(\R)$ hold for all $k\in \NN_0.$
  The crucial technical step is to show that $2^{\ell+1} pq + s(1+q^2) \in \R^+$ for all $\ell \in \NN.$
  
  We recursively define a sequence $(r_k)_{k\in \NN_0}$ in $\R$ as $r_0 \coloneqq p$ and $r_{k+1} \coloneqq 2 s_k r_k - r_k^2$ for all $k\in \NN_0.$
  Then $s_{k+1} - r_{k+1} = (s_k - r_k)^2 \in \R^+$ for all $k\in \NN_0,$ and therefore $s_k-r_k \in \R^+$ for all $k\in \NN_0$
  because in the case $k=0$ this reduces to the assumption that $p \le s.$ Moreover, we can prove by induction that $r_k \in \R^+$
  for all $k\in \NN_0$: For $k=0$ this is just the assumption that $p\in \R^+$; if $r_k \in \R^+$ for some $k\in \NN_0,$
  then $r_k^2(s_k-r_k) + (s_k-r_k)^2 r_k + s_k^2 r_k \in \R^+$ because $\R^+$ is stable under multiplication with squares, and a quick calculation
  shows that $r_k^2(s_k-r_k) + (s_k-r_k)^2 r_k + s_k^2 r_k = s_k r_{k+1},$ so $r_{k+1} \in \R^+$ by localizability of $s_k.$
  We have thus shown that $0 \le r_k \le s_k$ for all $k\in \NN_0.$
  For any $\ell \in \NN$ define
  \begin{equation*}
    S_\ell \coloneqq \sum_{k=0}^{\ell-1} 2^{\ell-k} \Bigl( \prod\nolimits_{j=k}^{\ell-1} s_j^2 \Bigr) r_k^2 q,
  \end{equation*}
  then $S_\ell \in \R^+$ because $q\in \R^+$ and because $\R^+$ is stable under multiplication with squares.
  Moreover,
  \begin{align*}
    S_\ell
    &=
    \sum_{k=0}^{\ell-1} 2^{\ell-k} \Bigl( \prod\nolimits_{j=k+1}^\ell s_j \Bigr) r_k^2 q
    \\
    &=
    \sum_{k=0}^{\ell-1} 2^{\ell-k} \Bigl( \prod\nolimits_{j=k+1}^\ell s_j \Bigr) \bigl(2 s_k r_k - r_{k+1}\bigr) q
    \\
    &=
    \sum_{k=0}^{\ell-1} 2^{(\ell+1)-k} \Bigl( \prod\nolimits_{j=k}^\ell s_j \Bigr) r_k q
    -
    \sum_{k=0}^{\ell-1} 2^{(\ell+1)-(k+1)} \Bigl( \prod\nolimits_{j=k+1}^\ell s_j \Bigr) r_{k+1} q
    \\
    &=
    2^{\ell+1} \Bigl( \prod\nolimits_{j=0}^\ell s_j \Bigr) p q - 2 s_\ell r_\ell q
  \end{align*}
  and therefore $2^{\ell+1} \bigl( \prod\nolimits_{j=0}^{\ell-1} s_j \bigr) p q - 2 r_\ell q \in \R^+$ by localizability of $s_\ell.$
  Now note that
  \begin{equation*}
    s_\ell (1+q^2)  + 2 r_\ell q = (s_\ell - r_\ell)(1+q^2) + r_\ell (1+q^2) + 2 r_\ell q = (s_\ell - r_\ell)(1^2+q^2) + r_\ell (1+q)^2 \in \R^+
  \end{equation*}
  because $s_\ell - r_\ell \in \R^+$ and $r_\ell \in \R^+$ as discussed above and because $\R^+$ is stable under multiplication with squares.
  Adding these two elements of $\R^+$ yields
  $2^{\ell+1} \bigl( \prod\nolimits_{j=0}^{\ell-1} s_j \bigr) p q + s_\ell (1+q^2) \in \R^+.$
  Finally, note that $s_\ell = \bigl( \prod\nolimits_{j=0}^{\ell-1} s_j \bigr) s,$ which is easily checked by induction over $\ell,$
  so by localizability of $s_j$ with $j\in \{0,\dots,\ell-1\}$ we obtain the desired result that $2^{\ell+1} p q + s (1+q^2) \in \R^+.$
  
  In order to complete the proof it only remains to show that this implies $k pq + s(1+q^2) \in \R^+$ for all $k\in \NN,$ then $pq \in \R^+$
  because $\R$ is archimedean by assumption. So let $k\in \NN$ be given, then there exists $\ell \in \NN$ such that $k\le 2^{\ell+1}$
  and $2^{\ell+1} k p q + k s (1+q^2) \in \R^+.$ Note that $s(1+q^2) \in \R^+$ because $s\in \R^+$ and because $\R^+$ is stable under multiplication with squares.
  Therefore $(2^{\ell+1} - k) s (1+q^2) \in \R^+$ and consequently $2^{\ell+1} k p q + 2^{\ell+1} s (1+q^2) \in \R^+.$
  As $2 \in \Loc(\R)$ this shows that $kpq + s(1+q^2) \in \R^+.$
\end{proof}

We also note that strong localizability and the archimedean property descend to subrings, i.e.~if a partially ordered commutative
ring $\R$ is archimedean or strongly localizable, then all its subrings are also archimedean or strongly localizable, respectively.
Conversely, strong localizability and the archimedean property can actually be checked by looking at only the finitely generated subrings:

\begin{proposition} \label{proposition:fingenEnough}
  Let $\R$ be a partially ordered commutative ring, then the following holds:
  \begin{enumerate}
    \item $\R$ is archimedean if (and only if) all finitely generated subrings of $\R$ are archimedean.
    \item $\R$ is strongly localizable if (and only if) all finitely generated subrings of $\R$ are strongly localizable.
  \end{enumerate}
\end{proposition}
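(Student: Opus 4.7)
My plan is to observe that only the nontrivial ``if'' directions need a proof, since the ``only if'' directions were already noted in the paragraph immediately preceding the proposition (strong localizability and the archimedean property both descend to subrings in the relative order). The main point I will exploit for both parts is that every subring $\S$ of $\R$ carries the relative order, which simply means $\S^+ = \S \cap \R^+$; so membership in the positive cone of a subring coincides with membership in $\R^+$ for any element that happens to lie in the subring.

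For part \textit{i.)}, assume every finitely generated subring of $\R$ is archimedean and take $g,h\in \R$ with $kg+h\in\R^+$ for all $k\in\NN$. I would let $\S$ be the subring of $\R$ generated by $\{g,h\}$. Then each element $kg+h$ lies in $\S$ and in $\R^+$, hence in $\S^+$; applying the archimedean property to $\S$ yields $g\in\S^+\subseteq\R^+$, as needed.

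For part \textit{ii.)}, assume every finitely generated subring of $\R$ is strongly localizable. Since $\R$ is a partially ordered commutative ring, squares already lie in $\R^+$, so strong localizability reduces to showing $1+\R^+\subseteq \Loc(\R)$. Given $s\in 1+\R^+$, write $s=1+t$ with $t\in\R^+$, and given $r\in\R$ with $rs\in\R^+$, I would take $\S$ to be the subring generated by $\{r,t\}$. Then $t\in\S\cap\R^+=\S^+$ gives $s\in 1+\S^+$, and $rs\in\S\cap\R^+=\S^+$; strong localizability of $\S$ then yields $r\in\S^+\subseteq\R^+$, so $s\in\Loc(\R)$.

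There is really no serious obstacle here: the whole argument is just a bookkeeping observation that both the archimedean axiom and the localizability condition are ``witnessed'' by finitely many elements of $\R$ (namely $g,h$ in one case and $r,t,s$ in the other), so one can always retreat to the finitely generated subring they span, apply the assumed property there, and push the conclusion back to $\R$ using $\S^+=\S\cap\R^+$. The only mild subtlety worth flagging is the remark about squares in \textit{ii.)}, to confirm that the positivity-of-squares half of strong localizability is automatic in the partially ordered commutative ring setting and hence needs no separate finite-generation argument.
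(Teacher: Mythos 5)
Your proposal is correct and follows essentially the same route as the paper: in both parts one passes to the finitely generated subring containing the witnessing elements (your $\genRing{\{r,t\}}$ with $s=1+t$ coincides with the paper's $\genRing{\{r,s\}}$), uses that the relative order gives $\S^+=\S\cap\R^+$, and notes that positivity of squares is automatic in a partially ordered commutative ring. No gaps.
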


\begin{proof}
  The ``only if''\,-part is clear in both cases. Given $r,s\in \R$ then we write $\genRing{\{r,s\}}$
  for the subring (with unit) of $\R$ that is generated by the two elements $r$ and $s.$
  
  First assume all finitely generated subrings of $\R$ are archimedean. If $r,s\in \R$ fulfil $k r + s \in \R^+$ for all $k \in \NN,$
  then $k r + s \in \bigl(\genRing{\{r,s\}}\bigr)^+$ for all $k\in \NN$ and therefore $r \in \bigl(\genRing{\{r,s\}}\bigr)^+\subseteq \R^+$
  because $\genRing{\{r,s\}}$ is archimedean. This shows that whole $\R$ is archimedean.

  Now assume all finitely generated subrings of $\R$ are strongly localizable. By the definition of partially ordered commutative rings,
  $r^2 \in \R^+$ for all $r\in \R.$ If $r\in \R,$ $s\in 1+\R^+$ fulfil $rs \in \R^+,$
  then $s \in 1+\bigl(\genRing{\{r,s\}}\bigr)^+$ and $rs \in \bigl(\genRing{\{r,s\}}\bigr)^+,$ and consequently 
  $r \in \bigl(\genRing{\{r,s\}}\bigr)^+\subseteq \R^+$ by strong localizability of $\genRing{\{r,s\}}.$ This shows that whole $\R$ is strongly localizable.
\end{proof}

\section{Almost everywhere defined continuous functions} \label{sec:somewhereDefRings}

Consider any map $\Phi \colon X \to Y$ between two sets $X$ and $Y,$ then we write ${\dom \Phi} \coloneqq X$ for its \emph{domain}.
We also write $\Phi\at{A} \colon A \to Y,$ $x\mapsto \Phi\at{A}(x) \coloneqq \Phi(x)$ for the \emph{restriction} of $\Phi$
to a subset $A$ of $X.$

For the partially ordered commutative ring of continuous $\RR$\=/valued functions (with pointwise operations and pointwise order) on a topological space $X$ we write
$\Stetig(X) \coloneqq \set{f\colon X \to \RR}{f\textup{ continuous}}.$ We also write $\Stetig(A)$ for the ring of
continuous $\RR$\=/valued functions defined on a subset $A$ of the topological space $X,$ where it is understood that $A$ carries
the relative topology of $X.$

Recall that a \emph{$\pi$-system} $\Dom$ on a set $X$ is a non-empty set of subsets of $X$ which is closed under finite intersections,
i.e.~$A\cap B \in \Dom$ for all $A,B\in \Dom$. In particular the topology of any topological space $X$ is a $\pi$-system on $X$.
Accordingly, if $X$ is a topological space, then a \emph{$\pi$-subsystem} $\Dom$ of the topology of $X$ is a non-empty set
of open subsets of $X$ which is closed under finite intersections.

\begin{definition}
  For any $\pi$-subsystem $\Dom$ of the topology of a topology space $X$ we define addition and multiplication on the (disjoint) union $\bigcup_{A\in \Dom} \Stetig(A)$:
  Given $f,g \in \bigcup_{A\in \Dom} \Stetig(A),$ then their sum and product $f+g, fg \in \Stetig({{\dom f}} \cap {{\dom g}}) \subseteq \bigcup_{A\in \Dom} \Stetig(A)$
  are defined as the pointwise sum and products of their restrictions to ${\dom f} \cap {\dom g},$ i.e.
  \begin{equation}
    f+g \coloneqq f \at{{\dom f}\,\cap\,{\dom g}} + g\at{{{\dom f}}\,\cap\,{\dom g}}
    \quad\quad\textup{and}\quad\quad
    fg \coloneqq f \at{{{\dom f}}\,\cap\,{\dom g}} \, g\at{{{\dom f}}\,\cap\,{\dom g}}
    .
  \end{equation}
\end{definition}
One easily checks that these operations are associative and commutative and they fulfil the usual distributivity of multiplication over addition.
Moreover, consider $B \in \Dom$ and $0_B, \Unit_B \in \Stetig(B)$ the constant $0$- and $1$-functions, then $f + 0_B = f\at{{\dom f} \cap B} = f \Unit_B$
for all $f \in \bigcup_{A\in \Dom} \Stetig(A).$ By passing to a suitable quotient we will obtain a partially ordered commutative ring:

\begin{definition}
  Consider a $\pi$-subsystem $\Dom$ of the topology of a topological space $X$.
  We define relations $\lesssim$ and $\approx$ on $\bigcup_{A\in \Dom} \Stetig(A)$ as follows: Given $f,g \in \bigcup_{A\in \Dom} \Stetig(A),$
  then $f\lesssim g$ if and only if there exists $A \in \Dom,$ $A \subseteq {{\dom f}} \cap {\dom g},$ such that $f\at{A} \le g\at{A}$ holds
  with respect to the order of $\Stetig(A),$ i.e.~pointwise.
  Moreover, $f\approx g$ if and only if $f\lesssim g$ and $g\lesssim f.$
\end{definition}
One easily checks that $\lesssim$ is a quasi-order on $\bigcup_{A\in \Dom} \Stetig(A),$ i.e.~a reflexive and transitive
relation. This quasi-order is also compatible with addition and multiplication in the sense that
\begin{equation}
  c+e \lesssim d+e
  ,\quad\quad
  0_B \lesssim fg
  ,\quad\quad\textup{and}\quad\quad
  0_B \lesssim e^2
  \label{eq:ringorderDom}
\end{equation}
hold for all $c,d,e,f,g \in \bigcup_{A\in \Dom} \Stetig(A)$ and $B \in \Dom$ with $c\lesssim d$ and $0_B \lesssim f,$ $0_B \lesssim g.$
Moreover, $\approx$ is an equivalence relation on $\bigcup_{A\in \Dom} \Stetig(A)$ that is explicitly given,
for $f,g\in \bigcup_{A\in \Dom} \Stetig(A)$, by $f \approx g$ if and only if there exists $A\in \Dom$, $A \subseteq {\dom f} \cap {\dom g}$
such that $f\at A = g\at A.$ In particular $0_A \approx 0_B$ and $\Unit_A \approx \Unit_B$ for all $A,B\in \Dom$.
It is now easy to check that addition descends to the quotient $\bigl(\bigcup_{A\in \Dom} \Stetig(A)\bigr) / {\approx}$.
Moreover, as $f+(-f) = 0_{\dom f}$ for all $f\in\bigl(\bigcup_{A\in \Dom} \Stetig(A)\bigr) / {\approx}$,
this quotient $\bigl(\bigcup_{A\in \Dom} \Stetig(A)\bigr)  / {\approx}$ becomes an abelian group. Similarly,
multiplication also descends to the quotient $\bigl(\bigcup_{A\in \Dom} \Stetig(A)\bigr) / {\approx}$, turning it into
a commutative ring.
Finally, if $f,f',g,g' \in \bigcup_{A\in \Dom} \Stetig(A)$ fulfil $f \approx f',$ $g\approx g',$ and $f\lesssim g,$ then also $f' \lesssim g'$
by transitivity of ${\lesssim}.$ This allows to equip the quotient $\bigl(\bigcup_{A\in \Dom} \Stetig(A)\bigr) / {\approx}$
with a well-defined order $\le$:

\begin{definition}
  Consider a $\pi$-subsystem $\Dom$ of the topology of a topological space $X$.
  Then we define the commutative ring $\Stetig_\approx(\Dom) \coloneqq \bigl(\bigcup_{A\in \Dom} \Stetig(A)\bigr)  / {\approx}$
  and the canonical projection onto the quotient $[\argument] \colon \bigcup_{A\in \Dom} \Stetig(A) \to \Stetig_\approx(\Dom).$
  Moreover, we endow $\Stetig_\approx(\Dom)$ with the relation $\le$ that is defined, for $f,g\in \bigcup_{A\in \Dom} \Stetig(A),$
  as $[f] \le [g]$ if and only if $f \lesssim g.$
\end{definition}
From \eqref{eq:ringorderDom} it follows that $\Stetig_\approx(\Dom)$ is a partially ordered commutative ring. Moreover:

\begin{proposition} \label{proposition:functionsLocalizable}
  Consider a $\pi$-subsystem $\Dom$ of the topology of a topological space $X$.
  Then the partially ordered commutative ring $\Stetig_\approx(\Dom)$ is strongly localizable.
\end{proposition}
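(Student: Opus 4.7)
The plan is to verify the two conditions of strong localizability from Definition~\ref{def:localizability}. The first condition, that $[f]^2 \ge 0$ for every $[f]\in \Stetig_\approx(\Dom)$, is already known from \eqref{eq:ringorderDom} (which is exactly what was used to see that $\Stetig_\approx(\Dom)$ is a partially ordered commutative ring in the first place). So the real content is the equality $\Loc(\Stetig_\approx(\Dom)) = \Unit + \Stetig_\approx(\Dom)^+$.

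The inclusion $\Loc(\Stetig_\approx(\Dom)) \subseteq \Unit + \Stetig_\approx(\Dom)^+$ is built into the definition of a localizable element, so the task reduces to showing that every $[t] = \Unit + [s]$ with $[s] \ge 0$ is localizable. The key move is to choose a well-behaved representative: since $[s] \ge 0$ means $s \gtrsim 0,$ there exists $B \in \Dom$ with $B \subseteq {\dom s}$ and $s\at B \ge 0$ pointwise; replacing $s$ by $s\at B$ produces a representative of $[s]$ whose values are pointwise $\ge 0$ on its entire domain. Then $h \coloneqq \Unit\at{\dom s} + s$ is a representative of $[t]$ satisfying $h \ge 1$ pointwise on $\dom s \in \Dom.$

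Now suppose $[r]\,[t] \ge 0$ and pick a representative $f$ of $[r].$ Then $fh \gtrsim 0,$ so there is $C \in \Dom$ with $C \subseteq {\dom f} \cap {\dom h}$ and $(fh)\at C \ge 0$ pointwise. Since $h \ge 1 > 0$ on $C$ (because $C \subseteq {\dom h} = {\dom s}$), one can pointwise divide by $h$ on $C$ to obtain $f\at C \ge 0,$ hence $f \gtrsim 0$ and $[r] \ge 0.$ This is precisely the defining property of localizability of $[t].$

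The main obstacle is essentially bookkeeping: making sure every set used as a witness for $\lesssim$ really lies in $\Dom.$ This works out automatically because $\Dom$ is closed under finite intersections and because the witnesses $B$ and $C$ are produced by the definition of $\lesssim$ and therefore already lie in $\Dom$; no further intersection step is needed (the set $C$ alone witnesses $f \gtrsim 0$).
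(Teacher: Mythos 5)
Your proof is correct and follows essentially the same route as the paper: positivity of squares comes for free from the partially ordered ring structure, and localizability of $\Unit+[s]$ is checked by finding a set in $\Dom$ on which the representative of $\Unit+[s]$ is pointwise $\ge 1$ and dividing pointwise. The only (cosmetic) difference is that you normalize the representative of $[s]$ first, whereas the paper simply intersects the two witness sets $A,B\in\Dom$ and argues on $A\cap B$.
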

\begin{proof}
  Assume $f,g\in \bigcup_{A\in \Dom} \Stetig(A)$ fulfil $[g] \in \Stetig_\approx(\Dom)^+$ and $[f(\Unit+g)] \in \Stetig_\approx(\Dom)^+.$
  This means that there are $A,B \in \Dom$ such that $g(x) \ge 0$ for all $x\in A$ and $f(x)\bigl(1+g(x)\bigr) \ge 0$ for all $x\in B.$
  Consequently $f(x) \ge 0$ for all $x\in A \cap B,$ so $[f] \in \Stetig_\approx(\Dom)^+.$
\end{proof}
In the construction of such strongly localizable partially ordered commutative rings $\Stetig_\approx(\Dom)$ it is of course desirable
that the empty set $\emptyset$ is not an element of $\Dom$: 
The ring $\Stetig_\approx(\Dom)$ is trivial, i.e.~$[0] = [\Unit],$ if and only if $\emptyset \in \Dom.$

\begin{example}
  Consider a $\pi$-subsystem $\Dom$ of the topology of a topological space $X$
  and assume that $\bigcap_{A\in \Dom} A \in \Dom$ (this is the case e.g.\ if $\Dom$ consists of only finitely many elements).
  Set $A_{\min} \coloneqq \bigcap_{A\in \Dom} A.$
  Then the relation $\lesssim$ on $\bigcup_{A\in \Dom} \Stetig(A)$ can be described as
  $f\lesssim g$ for $f,g\in \bigcup_{A\in \Dom}\Stetig(A)$ if and only if $f\at{A_{\min}} \le g\at{A_{\min}}.$
  It is now easy to check that the map $\Stetig(A_{\min}) \ni f \mapsto [f] \in \Stetig_\approx(\Dom)$ is an isomorphism
  of partially ordered rings.
\end{example}

\begin{example}
  Let $X$ be a topological space, $C$ a non-empty closed subset of $X,$ and $\Dom$ the set of all open subsets of $X$ that contain $C.$
  Then $\Dom$ is a $\pi$-subsystem of the topology of $X$ and $\Stetig_\approx(\Dom)$ is the usual partially ordered commutative ring of germs
  of continuous $\RR$\=/valued functions around $C.$
  Examples of this type are typically not archimedean, e.g.~consider $X \coloneqq [-1,1]$ and $C \coloneqq \{0\}.$
  Then the germ at $0$ of $f \in \Stetig([-1,1]),$ $x \mapsto f(x) \coloneqq -x^2$ is not positive,
  but the germ at $0$ of $1+kf$ is positive for all $k\in \NN.$
\end{example}

\begin{lemma} \label{lemma:AlEv}
  Let $X$ be a topological space and let $\Dom$ be the set of dense open subsets of $X.$ Then $\Dom$ is a $\pi$-subsystem of the topology of $X$.
\end{lemma}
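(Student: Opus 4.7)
The plan is entirely routine: we must verify the two conditions in the definition of an admissible set of domains, namely that $X \in \Dom$ and that $\Dom$ is closed under pairwise intersection.

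First, every element of $\Dom$ is open by construction, so no verification is needed on that point. The condition $X \in \Dom$ is immediate, since $X$ is trivially both open and dense in itself.

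The only non-trivial step is closure under intersection. Given $A, B \in \Dom$, the set $A \cap B$ is open because it is the intersection of two open sets. To check that $A \cap B$ is dense, I would argue directly using the characterization of density in terms of non-empty open sets: let $U \subseteq X$ be any non-empty open set. Since $A$ is dense, $U \cap A$ is non-empty, and since $A$ is open, $U \cap A$ is itself a non-empty open set. Applying density of $B$ to this non-empty open set yields $(U \cap A) \cap B \neq \emptyset$, i.e.\ $U \cap (A \cap B) \neq \emptyset$. As $U$ was arbitrary, $A \cap B$ is dense in $X$.

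There is no real obstacle here — the fact that finite intersections of dense open sets are dense is elementary and requires no completeness assumption on $X$ (unlike the countable version, which is the Baire category theorem). The proof should just assemble these two observations in a single short paragraph.
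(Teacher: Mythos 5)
Your proof is correct and follows essentially the same argument as the paper: verify $X\in\Dom$ trivially, then for $A,B\in\Dom$ intersect an arbitrary non-empty open set first with the dense open set $A$ and then apply density of $B$ to the resulting non-empty open set. No issues.
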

\begin{proof}
  $\Dom$ is non-empty because $X\in \Dom$. 
  Consider $A,B\in \Dom.$ Given any non-empty open subset $C$ of $X,$ then the open subset
  $A\cap C$ of $X$ is non-empty because $A$ is dense in $X,$ therefore $(A \cap B) \cap C = B\cap(A\cap C)$ is non-empty because $B$ is dense in $X.$
  This shows that the open subset $A\cap B$ of $X$ is again dense in $X,$ i.e.~$A\cap B \in \Dom.$
\end{proof}
Due to this Lemma~\ref{lemma:AlEv} we can define a certain partially ordered commutative ring $\AlEv(X)$ on any topological space $X$:

\begin{definition} \label{definition:AlEv}
  Let $X$ be a topological space, then we write $\AlEv(X) \coloneqq \Stetig_\approx(\Dom)$ with $\Dom$ the set of dense open subsets of $X$.
  An element of $\AlEv(X)$ is called an \emph{almost everywhere defined continuous $\RR$\=/valued function on $X$}.
\end{definition}

\begin{proposition} \label{proposition:alEvArchLoc}
  Let $X$ be a compact Hausdorff space, then $\AlEv(X)$ and all its subrings are strongly localizable archimedean partially ordered commutative rings.
\end{proposition}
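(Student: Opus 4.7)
The plan is to verify the three conditions separately, using Proposition~\ref{proposition:functionsLocalizable} for strong localizability, Baire category for the archimedean property, and the remark preceding Proposition~\ref{proposition:fingenEnough} for descent to subrings.

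First, since $\AlEv(X) = \Stetig_\approx(\Dom)$ for $\Dom$ the admissible set of dense open subsets of $X$ (by Lemma~\ref{lemma:AlEv}), strong localizability of $\AlEv(X)$ is immediate from Proposition~\ref{proposition:functionsLocalizable}. In particular, $\AlEv(X)$ is already a partially ordered commutative ring with $r^2 \in \AlEv(X)^+$ for all $r$, so the non-trivial content is the archimedean property.

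For the archimedean property, suppose $[f], [g] \in \AlEv(X)$ satisfy $k[f] + [g] \in \AlEv(X)^+$ for every $k \in \NN$. After replacing $f, g$ by their restrictions to $\dom f \cap \dom g$ (which is dense open and does not change the equivalence classes), we may assume $\dom f = \dom g =: A$. For each $k$ there is then a dense open set $U_k \subseteq A$ on which $kf + g \ge 0$ pointwise. Since $X$ is compact Hausdorff, hence a Baire space, the intersection $B := \bigcap_{k\in\NN} U_k$ is dense in $X$. For every $x \in B$ we have $f(x) \ge -g(x)/k$ for all $k \in \NN$, so $f(x) \ge 0$. Now $B \subseteq A$ is dense in $X$ and $A$ is open, so $B$ is also dense in $A$ with respect to the subspace topology. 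The set $\set{x \in A}{f(x) \ge 0}$ is closed in $A$ by continuity of $f$ and contains the dense subset $B$, so it equals all of $A$. Hence $f \ge 0$ on the dense open set $A$, which means $[f] \in \AlEv(X)^+$.

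Finally, descent to subrings is standard: if $\S$ is a subring of $\R$ and $\R$ is archimedean, then $kr + s \in \S^+$ for all $k$ implies $kr + s \in \R^+$ for all $k$, hence $r \in \R^+ \cap \S = \S^+$; similarly, if $\R$ is strongly localizable and $r \in \S$, $s \in 1 + \S^+$ fulfil $rs \in \S^+$, then all three relations also hold in $\R$, giving $r \in \R^+ \cap \S = \S^+$, while $r^2 \in \R^+ \cap \S = \S^+$ for every $r \in \S$ anyway. Thus both properties pass to all subrings of $\AlEv(X)$.

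The main obstacle is the archimedean step, specifically the passage from ``$f \ge 0$ on a dense subset of $X$'' to ``$f \ge 0$ on a dense open subset of $X$''; this is where compactness enters the argument via the Baire category theorem, together with the topological observation that a dense subset of $X$ contained in an open set $A$ is automatically dense in $A$.
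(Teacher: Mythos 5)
Your proof is correct and follows essentially the same route as the paper: strong localizability via Proposition~\ref{proposition:functionsLocalizable}, the archimedean property via the Baire category theorem on the compact Hausdorff space $X$ together with continuity to pass from a dense set to the whole (dense open) domain, and the standard descent of both properties to subrings. The only difference is cosmetic: you spell out the closed-set/density argument and the subring descent that the paper leaves implicit.
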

\begin{proof}
  By Proposition~\ref{proposition:functionsLocalizable}, $\AlEv(X)$ is a strongly localizable partially ordered commutative ring,
  so all its subrings are strongly localizable, too.
  It only remains to check that $\AlEv(X)$ is archimedean (which also implies that all its subrings are archimedean):
  
  Let $\Dom$ be the set of dense open subsets of $X$ and $f,g \in \bigcap_{A\in \Dom} \Stetig(A)$
  such that $[kf+g] \in \AlEv(X)^+$ for all $k\in \NN.$ This means there is a sequence
  $(A_k)_{k\in \NN}$ in $\Dom$ such that $A_k \subseteq {{\dom f}} \cap {\dom g}$ and $(kf+g)\at{A_k} \ge 0$ pointwise
  for all $k\in \NN.$ Write $A_\infty \coloneqq \bigcap_{k\in \NN} A_k$ and consider any $x\in A_\infty,$ then the estimate
  $kf(x) + g(x) \ge 0$ holds for all $k\in \NN.$ This shows that $f(x) \ge 0$ for all $x\in A_\infty.$
  As $X$ is a compact Hausdorff space by assumption, and in particular a Baire space, $A_\infty$ is still a dense subset of $X$ and
  therefore $A_\infty$ is dense in ${{\dom f}}.$ It follows that $f(x) \ge 0$ for all $x\in {{\dom f}}$ by 
  continuity of $f$, hence $[f] \in \AlEv(X)^+.$
\end{proof}
Our main Theorem~\ref{theorem:main} essentially shows that the converse of this Proposition~\ref{proposition:alEvArchLoc} is also true.

\begin{proposition} \label{proposition:maxDomain}
  Let $X$ be a topological space and $a \in \AlEv(X),$ then there is a unique representative $a_{\max} \in a$
  that fulfils $f = a_{\max} \at{{{\dom f}}}$ for all $f\in a.$ In particular, $\dom a_{\max} \supseteq {{\dom f}}$ for all $f\in a.$
\end{proposition}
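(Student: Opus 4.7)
The plan is to construct $a_{\max}$ by gluing together all representatives in the class $a$, using continuity to identify them on the intersections of their domains.

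The first step, which I expect to be the main technical point, is to upgrade the defining ``agreement on some dense open subset'' to ``agreement on the full intersection of domains''. Concretely, suppose $f, g \in a$, so there exists $A \in \Dom$ with $A \subseteq {\dom f} \cap {\dom g}$ and $f\at{A} = g\at{A}$. Since $\dom f \cap \dom g$ is open and $A$ is dense in $X$, the set $A$ is also dense in the open subspace $\dom f \cap \dom g$. The restrictions $f\at{\dom f \cap \dom g}$ and $g\at{\dom f \cap \dom g}$ are continuous $\RR$\-/valued functions agreeing on a dense subset of their domain, and because $\RR$ is Hausdorff they must agree on the whole of $\dom f \cap \dom g$.

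Having established this compatibility, I would define $U \coloneqq \bigcup_{f\in a} \dom f$ and then set $a_{\max} \colon U \to \RR$ by $a_{\max}(x) \coloneqq f(x)$ for any $f \in a$ with $x \in \dom f$. The previous step guarantees well-definedness. The domain $U$ is open as a union of open sets and dense in $X$ because each $\dom f$ is dense, so $U \in \Dom$. Continuity of $a_{\max}$ follows locally: each point $x \in U$ lies in some open $\dom f$ on which $a_{\max}$ coincides with the continuous function $f$. By construction $a_{\max}\at{\dom f} = f$ for every $f \in a$, which in particular shows $a_{\max} \approx f$, so $a_{\max} \in a$ and the required identity $f = a_{\max}\at{\dom f}$ holds.

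For uniqueness, suppose $b \in a$ also satisfies $f = b\at{\dom f}$ for all $f \in a$. Applying this with $f = a_{\max}$ yields $a_{\max} = b\at{\dom a_{\max}}$, hence $\dom a_{\max} \subseteq \dom b$; exchanging the roles of $a_{\max}$ and $b$ gives the reverse inclusion, so $\dom a_{\max} = \dom b$ and then $a_{\max} = b$ on this common domain, proving $a_{\max} = b$ as functions.
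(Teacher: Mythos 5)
Your proposal is correct and follows essentially the same route as the paper: upgrade agreement on a dense open set to agreement on the full intersection of domains via continuity, glue the representatives on the union of their domains, check openness, density and local continuity, and conclude $a_{\max}\in a$. Your uniqueness argument is just a slightly more explicit version of what the paper leaves as clear.
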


\begin{proof}
  The union $\bigcup_{f \in a} {{\dom f}}$ of open subsets of $X$ is again an open subset of $X.$
  As the equivalence class $a$ is non-empty, there exists $f\in a,$ and as ${{\dom f}}$ is already dense in $X$ it follows that 
  $\bigcup_{f \in a} {{\dom f}}$ is also dense in $X.$ Now consider $x\in \bigcup_{f \in a} {{\dom f}}$ and $f,f' \in a$ such that 
  $x\in {{\dom f}} \cap {\dom f'}.$ There exists a dense open subset $A$ of $X$ fulfilling $A \subseteq {\dom f} \cap {\dom f'}$ and $f\at A = f'\at A.$
  Then $A$ is also dense in ${\dom f} \cap {\dom f'},$ so $f\at{{\dom f} \cap {\dom f'}} = f'\at{{\dom f} \cap {\dom f'}}$
  by continuity of $f$ and $f',$ and in particular $f(x) = f'(x).$ It follows that the function
  $a_{\max} \colon \bigcup_{f \in a} {\dom f} \to \RR,$
  \begin{equation}
    x \mapsto a_{\max}(x) \coloneqq f(x)\quad\textup{with $f \in a$ any representative for which $x\in{\dom f}$}
  \end{equation}
  is well-defined. By this definition, $a_{\max}(x) = f(x)$ for all $f\in a$ and all $x\in {\dom f},$ i.e.~$f = a_{\max} \at{{\dom f}}$ for all $f\in a.$
  Consequently $a_{\max}$ is continuous, because for all $x\in \bigcup_{f \in a} {\dom f}$ there exists $f\in a$ with $x\in {\dom f},$
  so that ${\dom f}$ is an open neighbourhood of $x$ on which $a_{\max}$ coincides with the continuous function $f.$ As the domain of $a_{\max},$
  i.e.~$\bigcup_{f \in a} {\dom f},$ is a dense open subset of $X,$ it follows that $a_{\max} \in a.$
  This shows existence of a representative $a_{\max} \in a$ such that $f = a_{\max} \at{{\dom f}}$ for all $f\in a,$
  and it is clear that this condition uniquely determines $a_{\max}.$
\end{proof}

\begin{proposition} \label{proposition:alEvSubring}
  Let $X$ be a topological space and let $\Dom$ be the set of dense open subsets of $X.$
  Moreover, let $\Dom'$ be a $\pi$-subsystem of the topology of $X$ such that $\Dom' \subseteq \Dom.$
  Then the canonical inclusion map $\iota \colon \bigcup_{A \in \Dom'} \Stetig(A) \to \bigcup_{A \in \Dom} \Stetig(A),$ $f\mapsto \iota(f) \coloneqq f$
  descends to a well-defined positive ring morphism $\check\iota \colon \Stetig_\approx(\Dom') \to \Stetig_\approx(\Dom) = \AlEv(X),$
  $[f]' \mapsto \check\iota([f]') \coloneqq [f],$ where $[\argument]' \colon \bigcup_{A \in \Dom'} \Stetig(A) \to \Stetig_\approx(\Dom')$ and
  $[\argument] \colon \bigcup_{A \in \Dom} \Stetig(A) \to \Stetig_\approx(\Dom)$ are the canonical maps onto the quotient.
  Moreover, this positive ring morphism $\check\iota$ is an order embedding.
\end{proposition}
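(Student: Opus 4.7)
The plan is to verify four properties of the inclusion $\iota$ in sequence: that it respects the equivalence relation $\approx$ (so it descends to a well-defined map $\check\iota$), that it preserves the ring operations, that it is positive, and finally that it reflects positivity. The first three are essentially immediate from the definitions once one observes that $\iota$ is literally the set-theoretic inclusion: any $f\in\Stetig(A)$ with $A\in\Dom'$ is already an element of $\Stetig(A)$ viewed with $A\in\Dom$, because $\Dom'\subseteq\Dom$.

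For well-definedness and the ring morphism property, the key observation is that the sum and product of $f$ and $g$ are in either case defined as the same continuous function on the intersection $\dom f\cap\dom g$, and this intersection lies in both $\Dom'$ and $\Dom$ by admissibility; so $\iota(f+g)=\iota(f)+\iota(g)$ and $\iota(fg)=\iota(f)\iota(g)$ as actual functions. Likewise, if $f\approx g$ in $\bigcup_{A\in\Dom'}\Stetig(A)$, witnessed by some $A\in\Dom'$ on which $f$ and $g$ agree, then the same $A$ lies in $\Dom$, so $\iota(f)\approx\iota(g)$. Positivity is analogous: a witness $A\in\Dom'$ for $[f]\ge 0$ in $\Stetig_\approx(\Dom')$ is automatically a witness in $\Dom$.

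The only step requiring actual work is the order embedding property, and the tool I will use is density combined with continuity. Suppose $f\in\bigcup_{A\in\Dom'}\Stetig(A)$ satisfies $\check\iota([f])\ge 0$ in $\AlEv(X)$. Then there exists a dense open subset $A$ of $X$ with $A\subseteq\dom f$ and $f\at{A}\ge 0$ pointwise. Since $A$ is dense in $X$ and contained in the open subset $\dom f$, it is also dense in $\dom f$. The subset $\set{x\in\dom f}{f(x)\ge 0}$ is closed in $\dom f$ by continuity of $f$, and it contains the dense subset $A$, so it coincides with all of $\dom f$. Thus $f\ge 0$ pointwise on its full domain $\dom f\in\Dom'$, and choosing $\dom f$ itself as the witness shows $[f]\ge 0$ in $\Stetig_\approx(\Dom')$. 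I do not anticipate any real obstacle; the mild subtlety is just to notice that the density hypothesis in $X$ transfers to $\dom f$ because $\dom f$ is open in $X$.
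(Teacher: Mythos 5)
Your proposal is correct and follows essentially the same route as the paper: the routine checks that the inclusion respects $\approx$, the ring operations, and positivity, followed by the density-plus-continuity argument showing that a positivity witness on a dense open subset of $X$ forces pointwise positivity on all of $\dom f \in \Dom'$. The only cosmetic difference is that the paper phrases the key step for a general comparison $f \lesssim g$ rather than just $0 \le f$, which is equivalent by translation invariance.
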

\begin{proof}
  We write $\lesssim'$ and $\lesssim$ for the quasi orders on $\bigcup_{A \in \Dom'} \Stetig(A)$ and $\bigcup_{A \in \Dom} \Stetig(A),$ respectively.
  Consider any $f,g \in \bigcup_{A \in \Dom'} \Stetig(A).$ It is clear that $\iota(f+g) = \iota(f)+\iota(g)$ and $\iota(fg) = \iota(f) \iota(g)$ hold.
  If $f \lesssim' g,$ then certainly also $\iota(f) \lesssim \iota(g).$ Conversely, if $\iota(f) \lesssim \iota(g),$
  then there exists a dense open subset $A$ of $X$ fulfilling $A \subseteq {\dom f} \cap {\dom g}$ and such that $f\at{A} \le g\at{A}$ pointwise.
  As $A$ is dense in $X,$ $A$ is in particular dense in ${\dom f} \cap {\dom g},$ and therefore $f\at{{\dom f} \cap {\dom g}} \le g\at{{\dom f} \cap {\dom g}}$ pointwise.
  As ${\dom f} \cap {\dom g} \in \Dom'$ this shows that $f \lesssim' g.$
  
  It is now straightforward to check that $\iota$ descends to a well-defined map $\check\iota \colon \Stetig_\approx(\Dom') \to \Stetig_\approx(\Dom) = \AlEv(X),$
  $[f]' \mapsto \check\iota([f]') \coloneqq [f],$
  and that $\check \iota$ is a positive ring morphism and an order embedding.
\end{proof}
By slight abuse of notation, dropping the canonical order embedding $\check \iota,$ we can therefore treat any partially ordered 
commutative ring $\Stetig_\approx(\Dom')$ like in the above Proposition~\ref{proposition:alEvSubring} as a subring of $\AlEv(X).$
In this sense, $\Stetig_\approx(\Dom')$ consists of all those elements $a \in \AlEv(X)$ that
have a representative $f \in a$ fulfilling ${\dom f} \in \Dom',$ or equivalently,
\begin{equation}
  \label{eq:alEvSubring}
 \Stetig_\approx(\Dom') = \set[\big]{a \in \AlEv(X)}{\textup{there is $A \in \Dom'$ such that }A \subseteq \dom a_{\max}} 
\end{equation}
where $\dom a_{\max}$ is the maximal domain of an element $a$ of $\AlEv(X)$ like in Proposition~\ref{proposition:maxDomain}.

\section{Extended positive characters} \label{sec:characters}

Let $\R$ be a partially ordered commutative ring, then $\Loc(\R)$ is a multiplicative
submonoid of $\R$ and cancelable, i.e.~whenever $r\in \R,$ $s\in \Loc(\R)$ fulfil $rs = 0,$ then $r=0,$ because $rs = 0 \in \R^+ \cap (-\R^+)$
implies $r \in \R^+ \cap (-\R^+) = \{0\}.$ This simplifies the construction of the \emph{localization $\R_\loc$ of $\R$}
(with respect to $\Loc(\R)$):

$\R_\loc$ is the commutative ring whose underlying set is $\bigl(\R \times \Loc(\R)\bigr)/{\sim},$
the quotient of the cartesian product of the sets $\R$ and $\Loc(\R)$ modulo the equivalence
relation $\sim$ that is defined, for $p,r \in \R$ and $q,s \in \Loc(\R),$ as $(p,q) \sim (r,s)$
if and only if $ps = qr$ (transitivity of $\sim$ holds because $\Loc(\R)$ is cancelable).
For an equivalence class in $\R \times \Loc(\R)$ with respect to $\sim$
we simply write $p/q \in \R_\loc$ with representatives $p\in\R,$ $q\in \Loc(\R).$
Addition and multiplication of $\R_\loc$ are defined as
\begin{equation}
  p/q + r/s \coloneqq (ps+qr)/(qs)\quad\quad\text{and}\quad\quad (p/q)(r/s) \coloneqq (pq)/(rs) \quad\quad\text{for $p/q,r/s \in \R_\loc$.}
\end{equation}
This way, $\R_\loc$ becomes a well-defined commutative ring with unit $1/1 \in \R_\loc.$ Moreover, write
\begin{equation}
  \R^+_\loc \coloneqq \set[\big]{p/q}{p\in \R^+, q \in \Loc(\R)}
  ,
\end{equation}
then $\R^+_\loc$ is a preordering of $\R_\loc$ and an element $p/q \in \R_\loc$ with $p\in \R$ and $q \in \Loc(\R)$
fulfils $p/q \in \R^+_\loc$ if and only if $p\in \R^+$: Indeed, $p\in \R^+$ implies $p/q \in \R^+_\loc$ by definition,
and if $p/q \in \R^+_\loc,$ then there exist $r \in \R^+$ and $s \in \Loc(\R)$ such that $p/q = r/s,$ i.e.~$ps = qr \in \R^+,$
so $p\in \R^+$ by localizability of $s.$ In particular, $p/q = 0 \in \R^+_\loc \cap (-\R^+_\loc)$ implies $p\in \R^+ \cap (-\R^+) = \{0\},$
so $\R^+_\loc \cap (-\R^+_\loc) = \{0\}.$ Therefore $\R_\loc$ becomes a partially ordered commutative ring with
order given, for $p,r \in \R$ and $q,s \in \Loc(\R),$ by $p/q \le r/s$ if and only if $r/s - p/q \in \R^+_\loc,$
or equivalently, $ps \le qr.$

We summarize the preceding discussion:
\begin{proposition} \label{proposition:loc}
  Let $\R$ be a partially ordered commutative ring, then its localization $\R_\loc$ is a partially ordered commutative ring
  and the
  canonical inclusion 
  $\iota \colon \R \to \R_\loc,$ $r\mapsto \iota(r) \coloneqq r/1$ is a positive ring morphism and an order embedding.
\end{proposition}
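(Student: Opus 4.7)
The bulk of the verification has already been carried out in the discussion preceding the proposition, so my plan is mainly to assemble those observations and check the few remaining points that make $\iota$ a positive ring morphism and an order embedding.

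First I would note that everything needed for $\R_\loc$ to be a partially ordered commutative ring has already been established: the relation $\sim$ is indeed an equivalence relation (reflexivity and symmetry are immediate, transitivity uses cancellability of $\Loc(\R)$, which in turn follows from $\R^+ \cap (-\R^+) = \{0\}$), the operations $+$ and $\cdot$ on $\R_\loc$ descend well from representatives, $\R_\loc^+$ is closed under addition and multiplication and contains all squares $(p/q)^2 = p^2/q^2$ because $p^2 \in \R^+$ and $q^2 \in \Loc(\R)$ (since $\Loc(\R)$ is closed under multiplication), and finally $\R_\loc^+ \cap (-\R_\loc^+) = \{0\}$ as argued in the text via the key lemma that $p/q \in \R_\loc^+$ forces $p \in \R^+$.

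Next I would check that $\iota \colon \R \to \R_\loc$, $r \mapsto r/1$, is a ring morphism. This is routine: $\iota(r+s) = (r+s)/1 = r/1 + s/1 = \iota(r)+\iota(s)$ and $\iota(rs) = (rs)/1 = (r/1)(s/1) = \iota(r)\iota(s)$ by the definitions of addition and multiplication in $\R_\loc$, while $\iota(1) = 1/1$ is the unit of $\R_\loc$.

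Finally I would verify positivity and the order-embedding property simultaneously using the characterization already proved: for $r \in \R$, $\iota(r) = r/1 \in \R_\loc^+$ if and only if $r \in \R^+$. One direction is positivity (by the very definition of $\R_\loc^+$), and the other is the order-embedding property (if $r/1 \in \R_\loc^+$ then $r \in \R^+$, which is exactly $\iota^{-1}(\R_\loc^+) \subseteq \R^+$). No step here is difficult; the only slightly subtle point was already handled earlier, namely showing $p/q \in \R_\loc^+ \Rightarrow p \in \R^+$ via localizability of the denominator. Thus the proof reduces to recording these consequences of the preceding discussion.
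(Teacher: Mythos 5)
Your proposal is correct and follows essentially the same route as the paper, whose "proof" is precisely the discussion preceding the proposition: the construction of $\R_\loc,$ the observation that $p/q \in \R^+_\loc$ forces $p\in \R^+$ by localizability of the denominator (whence $\R^+_\loc \cap (-\R^+_\loc) = \{0\}$), and the resulting characterization $\iota(r) \in \R^+_\loc \Leftrightarrow r \in \R^+$ giving positivity and the order-embedding property at once. Nothing is missing.
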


In the next step we investigate the subring of the localization that is given by its uniformly bounded elements and the corresponding space of positive characters.
So let $\R$ be a partially ordered commutative ring, then we define the subset
\begin{equation}
  \R^\bd_\loc
  \coloneqq
  \set[\big]{
    a\in\R_\loc
  }{
    \textup{there exists $n\in \NN_0$ such that $-n \le a \le n$}
  }
\end{equation}
of \emph{uniformly bounded} elements of $\R_\loc.$ In particular $r / s \in \R^\bd_\loc$ for $r\in \R,$ $s\in \Loc(\R)$ with $-s \le r \le s,$
and $1/s \in (\R^\bd_\loc)^+ = \R^+_\loc \cap \R^\bd_\loc$ for all $s\in \Loc(\R)$.

\begin{proposition}
  Let $\R$ be a partially ordered commutative ring, then $\R^\bd_\loc$ is a subring of $\R_\loc.$
\end{proposition}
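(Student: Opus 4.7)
The plan is to verify directly that $\R^\bd_\loc$ contains the multiplicative unit and is closed under the ring operations of $\R_\loc,$ using only that $\R_\loc$ is a partially ordered commutative ring by Proposition~\ref{proposition:loc}. Containment of $0$ and $1$ is witnessed by $n=0$ and $n=1$, and closure under negation is immediate since the defining condition $-n \le a \le n$ is symmetric under $a \mapsto -a.$ Closure under addition is also routine: if $-n \le a \le n$ and $-m \le b \le m$ with $n,m \in \NN_0,$ then adding the inequalities yields $-(n+m) \le a+b \le n+m.$

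The only non-trivial step is closure under multiplication. Given $-n \le a \le n$ and $-m \le b \le m,$ the natural candidate bound $|ab| \le nm$ is hard to establish directly: the obvious identity $(n+a)(m+b) + (n-a)(m-b) = 2(nm+ab)$ only shows that $2(nm+ab) \in \R_\loc^+,$ and since there is no reason in a general partially ordered commutative ring to expect $2$ to be localizable in $\R_\loc,$ the factor of $2$ cannot simply be cancelled. This is the main obstacle.

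My remedy is to accept a slightly looser, but divisibility-free, bound of the form $N = 3nm.$ Concretely, I would verify the two algebraic identities
\begin{equation*}
3nm + ab = (n+a)(m+b) + n(m-b) + m(n-a),
\qquad
3nm - ab = (n+a)(m-b) + n(m+b) + m(n-a).
\end{equation*}
On each right-hand side, every summand is a product of two elements of $\R_\loc^+$: the factors $n \pm a$ and $m \pm b$ lie in $\R_\loc^+$ by the hypotheses on $a$ and $b,$ while $n,m \in \NN_0$ lie in $\R_\loc^+$ because $1 \in \R_\loc^+$ and $\R_\loc^+$ is closed under addition. Since $\R_\loc^+$ is a preordering it is also closed under multiplication, so each summand, hence each sum, lies in $\R_\loc^+.$ This yields $-3nm \le ab \le 3nm$ with $3nm \in \NN_0,$ so $ab \in \R^\bd_\loc,$ completing the verification that $\R^\bd_\loc$ is a subring of $\R_\loc.$
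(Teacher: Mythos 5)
Your proof is correct and follows essentially the same route as the paper: both establish closure under multiplication via the bound $-3mn \le ab \le 3mn,$ writing $3mn \pm ab$ as a sum of three products of elements of $\R^+_\loc$ (your decompositions are just a harmless relabelling of the paper's). The additional remark about why the tighter bound $nm$ is out of reach without dividing by $2$ is a nice, accurate observation, but not needed for the argument.
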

\begin{proof}
  This is well-known, just note that, if $a,b\in \R_\loc$ and $m,n\in \NN_0$ fulfil $-m \le a \le m$ and $-n \le b\le n,$ then
  \begin{align*}
    3mn-ab &= (m-a)(n+b) + (m+a)n + m(n-b) \in \R^+_\loc
  \shortintertext{and}
    3mn+ab &= (m+a)(n+b) + (m-a)n + m(n-b) \in \R^+_\loc,
  \end{align*}
  i.e.~$-3mn \le ab \le 3mn.$
\end{proof}
In particular, $\R^\bd_\loc$ is a partially ordered commutative ring with positive cone $(\R^\bd_\loc)^+ = \R^+_\loc \cap \R^\bd_\loc.$
Note that $1/q \in (\R^\bd_\loc)^+$ for all $q\in \Loc(\R)$ because $\Loc(\R) \subseteq 1+\R^+$ by definition.

We can now give another of the central definitions of this article:

\begin{definition}
  Let $\R$ be a partially ordered commutative ring, then an \emph{extended character} of $\R$ is a ring morphism
  from $\R^\bd_\loc$ to $\RR.$ Such an extended character $\varphi \colon \R^\bd_\loc \to \RR$ is called \emph{positive}
  if $\varphi(a) \ge 0$ for all $a\in (\R^\bd_\loc)^+.$ We write
  \begin{equation}
    \KSpace(\R)
    \coloneqq
    \set[\big]{\varphi\colon \R^\bd_\loc \to \RR}{ \varphi \textup{ a positive ring morphism}}
  \end{equation}
  for the topological space of all positive extended characters of $\R^\bd_\loc$ equipped with the weak-$^*$-topology,
  i.e.~the topology obtained from the subbasis of all the (by definition) open subsets
  \begin{equation}
    \Open_{a,V} \coloneqq \set[\big]{\varphi\in \KSpace(\R)}{\varphi(a) \in V} \subseteq \KSpace(\R)
  \end{equation}
  with $a\in \R^\bd_\loc$ and with $V$ an open subset of $\RR.$
\end{definition}

\begin{proposition} \label{proposition:neighbourhoods}
  Let $\R$ be a partially ordered commutative ring with $\NN \subseteq \Loc(\R)$ and let $\varphi \in \KSpace(\R)$ 
  and a neighbourhood $U$ of $\varphi$ in $\KSpace(\R)$ be given.
  Then there exists $b \in \R^\bd_\loc$ such that $\varphi \in \Open_{b,{]{-\infty},0[}} \subseteq U.$
\end{proposition}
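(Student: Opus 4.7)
The plan is to unfold the definition of the weak-$^*$ topology, reduce $U$ to a basic neighbourhood of $\varphi$ cut out by finitely many strict positivity conditions, and then encode all of these conditions into a single element $b$ via a sum-of-squares construction.

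First, since the sets $\Open_{a,V}$ form a subbasis of $\KSpace(\R),$ $U$ contains a finite intersection $\bigcap_{i=1}^n \Open_{a_i, V_i}$ with $\varphi(a_i) \in V_i.$ I would shrink each $V_i$ to an open interval ${]\gamma_i, \delta_i[}$ with rational endpoints $\gamma_i, \delta_i \in \QQ$ still containing $\varphi(a_i).$ The hypothesis $\NN \subseteq \Loc(\R)$ implies $\QQ \subseteq \R^\bd_\loc,$ so the elements $c_{2i-1} \coloneqq a_i - \gamma_i$ and $c_{2i} \coloneqq \delta_i - a_i$ all lie in $\R^\bd_\loc,$ and setting $m \coloneqq 2n$ rewrites this basic neighbourhood as $\bigcap_{j=1}^{m} \set{\psi \in \KSpace(\R)}{\psi(c_j) > 0}$ with $\varphi(c_j) > 0$ for every $j.$

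The geometric key is that a small enough Euclidean ball around a point with strictly positive coordinates is entirely contained in the positive orthant of $\RR^m.$ Using density of $\QQ$ in $\RR,$ one can therefore choose positive rationals $\alpha_1, \ldots, \alpha_m$ and a positive rational $\delta$ with
\begin{equation*}
  \sum_{j=1}^m \bigl(\varphi(c_j) - \alpha_j\bigr)^2 < \delta < \min_{j=1}^m \alpha_j^2.
\end{equation*}
I would then set
\begin{equation*}
  b \coloneqq \sum_{j=1}^m (c_j - \alpha_j)^2 - \delta,
\end{equation*}
which lies in $\R^\bd_\loc$ because $\R^\bd_\loc$ is a ring containing $\QQ.$

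The verification is routine. On the one hand, $\varphi(b) = \sum_j (\varphi(c_j) - \alpha_j)^2 - \delta < 0$ by the choice of $\delta.$ On the other hand, any $\psi \in \Open_{b,{]{-\infty},0[}}$ satisfies $\bigl(\psi(c_j) - \alpha_j\bigr)^2 \le \sum_{j'=1}^m \bigl(\psi(c_{j'}) - \alpha_{j'}\bigr)^2 < \delta,$ hence $\psi(c_j) > \alpha_j - \sqrt{\delta} > 0$ for every $j,$ and so $\psi$ lies in the basic neighbourhood from the first step and hence in $U.$ The only real obstacle is the quantitative coordinated choice of the rationals $\alpha_j$ and $\delta$; once these are in place, all ring-theoretic steps use merely that any $\psi \in \KSpace(\R)$ preserves sums, products, and rational scalars.
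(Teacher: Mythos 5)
Your proof is correct and follows essentially the same route as the paper: both reduce $U$ to a basic neighbourhood given by finitely many elements of $\R^\bd_\loc$ and encode it in a single element of the form $\sum_j(\text{element}-\text{rational})^2$ minus a rational constant, using that each square is dominated by the sum. Your intermediate rewriting of the interval conditions as strict positivity of the elements $c_j$ is only a cosmetic reparametrization of the paper's choice $b = -1 + 2\epsilon^{-2}\sum_j (a_j-\lambda_j)^2.$
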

\begin{proof}
  Note that $\NN \subseteq \Loc(\R)$ implies $\QQ \subseteq \R^\bd_\loc.$
  There are $n\in \NN,$ elements $a_1, \dots, a_n \in \R^\bd_\loc,$ and open subsets
  $V_1, \dots, V_n$ of $\RR$ such that $\varphi \in \Open_{a_1,V_1} \cap \dots \cap \Open_{a_n,V_n} \subseteq U.$
  In particular $\varphi(a_j) \in V_j$ for all $j\in \{1,\dots,n\}$ and therefore there is $\epsilon \in {]0,\infty[} \cap \QQ$
  such that ${]{\varphi(a_j)-\epsilon},\varphi(a_j)+\epsilon[} \subseteq V_j$ for all $j\in \{1,\dots,n\}.$
  There also are $\lambda_1,\dots,\lambda_n \in \QQ$ such that $\abs{\varphi(a_j)-\lambda_j} \le \min\{ \epsilon/\sqrt{4n} ,\epsilon/4 \}$ for all $j\in \{1,\dots,n\}.$
  Set $b \coloneqq -1 + 2\epsilon^{-2}\sum_{j=1}^n (a_j - \lambda_j)^2 \in \R^\bd_\loc,$ then 
  \begin{equation*}
    \varphi(b) = -1 + 2\epsilon^{-2}\sum_{j=1}^n \abs{\varphi(a_j) - \lambda_j}^2 \le -1 + 2n\epsilon^{-2} \bigr(\epsilon^2/(4n)\bigr) = - 1/2 < 0.
  \end{equation*}
  This shows that $\varphi \in \Open_{b,{]{-\infty},0[}}.$
  
  Moreover, for every $\psi \in \KSpace(\R) \setminus U$
  there is $j' \in \{1,\dots,n\}$ such that $\psi \notin \Open_{a_{j'},V_{j'}},$ i.e.~$\psi(a_{j'}) \notin V_{j'}$
  and in particular $\abs{\psi(a_{j'}) - \varphi(a_{j'})} \ge \epsilon.$ It follows that
  $\abs{\psi(a_{j'}) - \lambda_{j'}} \ge \epsilon - \epsilon/4 = \frac{3}{4}\epsilon,$ so
  \begin{equation*}
    \psi(b) = -1+2\epsilon^{-2}\sum_{j=1}^n \abs{\psi(a_j) - \lambda_j}^2 \ge -1 + 2\epsilon^{-2} \abs{\psi(a_{j'}) - \lambda_{j'}}^2 \ge -1+2 \biggl(\frac{3}{4}\biggr)^2 = 1/8 > 0
    ,
  \end{equation*}
  so $\psi \notin \Open_{b,{]{-\infty},0[}}.$ This shows that $\Open_{b,{]{-\infty},0[}} \subseteq U.$
\end{proof}

\section{The extended Gelfand transformation} \label{sec:gelfand}

To every partially ordered commutative ring $\R$ we can first assign its topological space $\KSpace(\R)$ of
positive extended characters, and, after choosing a suitable $\pi$-subsystem $\LocDomains(\R)$ of the topology of $\KSpace(\R)$,
we will obtain another partially ordered commutative
ring $\Stetig_\approx\bigl(\LocDomains(\R)\bigr).$
We then relate these by constructing a positive ring morphism $\widehat\argument \colon \R \to \Stetig_\approx\bigl(\LocDomains(\R)\bigr),$
the extended Gelfand transformation.

\begin{definition}
  Let $\R$ be a partially ordered commutative ring. For all $s\in \Loc(\R)$ we define the open subset
  $\Open_{s<\infty} \coloneqq \Open_{1/s,{]0,\infty[}}$ of $\KSpace(\R)$ and we write
  $\LocDomains(\R) \coloneqq \set[\big]{\Open_{s<\infty}}{s\in \Loc(\R)}.$
\end{definition}

\begin{proposition} \label{proposition:openQS}
  Let $\R$ be a partially ordered commutative ring, then $\Open_{q<\infty} \cap \Open_{s<\infty} = \Open_{qs<\infty}$
  for all $q,s\in \Loc(\R)$ and $\Open_{1<\infty} = \KSpace(\R),$ so $\LocDomains(\R)$ is a $\pi$-subsystem of the topology of $\KSpace(\R)$.
\end{proposition}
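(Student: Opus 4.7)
The plan is to reduce the claimed set equalities to a single algebraic identity inside the bounded localization $\R^\bd_\loc$, namely $(1/q)(1/s) = 1/(qs)$ for $q,s \in \Loc(\R)$, and then to use positivity of the characters.

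First I would verify that $1/s \in \R^\bd_\loc$ whenever $s \in \Loc(\R)$. Since $s \in \Loc(\R) \subseteq 1+\R^+$, we have $s-1 \in \R^+$ and hence $(s-1)/s \in \R^+_\loc$, so $0 \le 1/s \le 1$ in $\R_\loc$. In particular $\varphi(1/s) \in [0,1]$ for every $\varphi \in \KSpace(\R)$. I would also note that $qs \in \Loc(\R)$ because $\Loc(\R)$ is closed under multiplication, so the symbol $\Open_{qs<\infty}$ makes sense.

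Next, for the main identity, observe that in $\R_\loc$ one has $(1/q)(1/s) = 1/(qs)$, and all three factors lie in $\R^\bd_\loc$. Applying any $\varphi \in \KSpace(\R)$, which is a ring morphism, gives $\varphi(1/(qs)) = \varphi(1/q)\,\varphi(1/s)$. Since both factors lie in $[0,1]$, the product is strictly positive if and only if each factor is strictly positive. Translating back, this says
\begin{equation*}
  \varphi \in \Open_{qs<\infty}
  \quad\Longleftrightarrow\quad
  \varphi(1/q) > 0 \textup{ and } \varphi(1/s) > 0
  \quad\Longleftrightarrow\quad
  \varphi \in \Open_{q<\infty} \cap \Open_{s<\infty},
\end{equation*}
which is the first claim.

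For the second claim, $\Open_{1<\infty} = \set{\varphi\in\KSpace(\R)}{\varphi(1/1) > 0}$, and since ring morphisms are unital by convention, $\varphi(1/1) = \varphi(1) = 1 > 0$ for every $\varphi$, so $\Open_{1<\infty} = \KSpace(\R)$. The admissibility of $\LocDomains(\R)$ then follows immediately: it contains $\KSpace(\R) = \Open_{1<\infty}$ and is closed under binary intersections by the first claim. I don't anticipate any real obstacle here; the only subtlety is making sure $1/s$ really is bounded so that $\varphi$ can be evaluated on it, which is immediate from $s \ge 1$.
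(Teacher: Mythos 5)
Your proposal is correct and follows essentially the same route as the paper: both reduce the claim to $\varphi(1/(qs)) = \varphi(1/q)\varphi(1/s)$ and use positivity of $\varphi$ together with $1/q, 1/s \in (\R^\bd_\loc)^+$ to rule out the case of two negative factors (you phrase this as both factors lying in $[0,1]$). The boundedness check $0 \le 1/s \le 1$ you carry out is already recorded in the paper just before the definition of extended characters, so nothing is missing.
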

\begin{proof}
  Clearly $\Open_{1<\infty} = \Open_{1/1,{]0,\infty[}} = \KSpace(\R)$.
  Now consider $q,s\in \Loc(\R)$ and $\varphi \in \KSpace(\R).$ Then $\varphi \in \Open_{q<\infty} \cap \Open_{s<\infty}$
  if and only if $\varphi(1/q) > 0$ and $\varphi(1/s)>0.$ Similarly, $\varphi \in \Open_{qs<\infty}$ if and only if $\varphi\bigl(1/(qs)\bigr)>0.$
  If $\varphi(1/q) > 0$ and $\varphi(1/s)>0,$ then of course $\varphi\bigl(1/(qs)\bigr) = \varphi(1/q) \varphi(1/s)>0.$
  Conversely, if $\varphi\bigl(1/(qs)\bigr) = \varphi(1/q) \varphi(1/s)>0,$ then either $\varphi(1/q) > 0$ and $\varphi(1/s)>0,$
  or $\varphi(1/q) < 0$ and $\varphi(1/s)<0.$ The latter, however, is not possible because $\varphi$ is a positive ring morphism
  and $1/q,1/s \in (\R^\bd_\loc)^+.$
\end{proof}

Before defining the extended Gelfand transformation we need to check the following:
\begin{lemma} \label{lemma:equivalentReps}
  Let $\R$ be a partially ordered commutative ring. For $r\in \R$ and $s \in \Loc(\R)$ satisfying $r/s \in \R^\bd_\loc$
  define the function $r_s \colon \Open_{s<\infty} \to \RR,$
  \begin{align}
    \varphi &\mapsto  r_s(\varphi) \coloneqq \varphi(1/s)^{-1} \varphi(r/s)
    .
  \end{align}
  Then $r_s$ is continuous. Moreover, if $r\in \R$ and $s,s' \in \Loc(\R)$ fulfil $r/s, r/s' \in \R^\bd_\loc,$
  then $r_s(\varphi) = r_{s'}(\varphi)$ for all $\varphi \in \Open_{s<\infty} \cap \Open_{s'<\infty}.$
  In particular, $r_s$ and $r_{s'}$ are representatives of the same $\approx$-equivalence class in $\Stetig_\approx\bigl( \LocDomains(\R) \bigr).$
\end{lemma}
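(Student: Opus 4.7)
The plan is to unpack each assertion directly from the definitions, using only multiplicativity of the characters in $\KSpace(\R)$ together with the definition of the weak-$*$ topology. There is no real obstacle; the one point requiring care is noticing that on $\Open_{s<\infty}$ the evaluation $\varphi(1/s)$ is strictly positive, which is exactly what licenses the division appearing in the definition of $r_s$.

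For continuity, I would argue as follows. For every $a\in \R^\bd_\loc$ the evaluation map $\mathrm{ev}_a\colon \KSpace(\R)\to \RR$, $\varphi\mapsto \varphi(a)$, is continuous by the very definition of the weak-$*$ topology, since the preimages of open subsets of $\RR$ under $\mathrm{ev}_a$ are precisely the subbasic open sets $\Open_{a,V}$. Restricted to $\Open_{s<\infty}$, the map $\mathrm{ev}_{1/s}$ is continuous and everywhere strictly positive, hence its pointwise reciprocal is continuous as well. Since $r_s = (\mathrm{ev}_{1/s})^{-1}\cdot \mathrm{ev}_{r/s}$ on $\Open_{s<\infty}$, it is a product of continuous $\RR$-valued functions and therefore continuous.

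For the coincidence of $r_s$ and $r_{s'}$ on $\Open_{s<\infty}\cap \Open_{s'<\infty}$, I would use multiplicativity of $\varphi$ and the identity $(1/s)(r/s') = r/(ss') = (1/s')(r/s)$ inside $\R_\loc$; note that both factors on either side lie in the subring $\R^\bd_\loc$, so $\varphi$ is defined on them. Applying $\varphi$ gives $\varphi(1/s)\,\varphi(r/s') = \varphi(1/s')\,\varphi(r/s)$. On the intersection both $\varphi(1/s)$ and $\varphi(1/s')$ are strictly positive by definition of the sets $\Open_{s<\infty}$ and $\Open_{s'<\infty}$, so dividing yields $\varphi(1/s)^{-1}\varphi(r/s) = \varphi(1/s')^{-1}\varphi(r/s')$, i.e.\ $r_s(\varphi)=r_{s'}(\varphi)$.

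Finally, to deduce that $r_s$ and $r_{s'}$ represent the same $\approx$-equivalence class, I would invoke Proposition~\ref{proposition:openQS}: the intersection $\Open_{s<\infty}\cap \Open_{s'<\infty}$ equals $\Open_{ss'<\infty}$, which lies in $\LocDomains(\R)$ and is contained in $\dom r_s \cap \dom r_{s'}$. Combined with the pointwise equality just established, this is exactly the condition $r_s \approx r_{s'}$, completing the lemma.
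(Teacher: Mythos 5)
Your proof is correct and follows essentially the same route as the paper: continuity via the weak-$*$ definition and positivity of $\varphi(1/s)$ on $\Open_{s<\infty}$, the cross-multiplication identity $\varphi(1/s)\varphi(r/s') = \varphi(1/s')\varphi(r/s)$ from multiplicativity, and Proposition~\ref{proposition:openQS} to place the intersection in $\LocDomains(\R)$ for the $\approx$-equivalence. No gaps.
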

\begin{proof}
  Consider $r\in \R$ and $s \in \Loc(\R)$ such that $r/s \in \R^\bd_\loc.$ Then
  $\Open_{s<\infty} \ni \varphi \mapsto \varphi(1/s) \in {]0,\infty[}$
  and $\Open_{s<\infty} \ni \varphi \mapsto \varphi(r/s) \in \RR$ are continuous by definition of the weak $^*$-topology
  on $\KSpace(\R).$ It follows that $r_s$ is also continuous.
  
  Now consider $r\in \R$ and $s,s' \in \Loc(\R)$ satisfying $r/s, r/s' \in \R^\bd_\loc.$
  Then for all $\varphi \in \Open_{s<\infty} \cap \Open_{s'<\infty}$ the identity
  \begin{align*}
    \varphi(1/s)^{-1} \varphi(r/s)
    &=
    \varphi(1/s)^{-1} \varphi(1/s')^{-1}\varphi(1/s')\, \varphi(r/s)
    \\
    &=
    \varphi(1/s)^{-1} \varphi(1/s')^{-1}\varphi(1/s)\, \varphi(r/s')
    \\
    &=
    \varphi(1/s')^{-1} \varphi(r/s')
  \end{align*}
  holds. As $\Open_{s<\infty}, \Open_{s'<\infty} \in \LocDomains(\R)$ by definition, therefore
  $\Open_{s<\infty} \cap \Open_{s'<\infty} \in \LocDomains(\R)$ by the previous Proposition~\ref{proposition:openQS}, it follows that
  $r_s$ and $r_{s'}$ are representatives of the same $\approx$-equivalence class in $\Stetig_\approx\bigl( \LocDomains(\R) \bigr).$
\end{proof}
Note also that for any element $r$ of a localizable partially ordered commutative ring $\R$ there is $s\in \Loc(\R)$
such that $-s \le r \le s,$ hence $r/s \in \R^\bd_\loc.$ This observation and Lemma~\ref{lemma:equivalentReps} allow us to define:

\begin{definition} \label{definition:widehat}
  Let $\R$ be a localizable partially ordered commutative ring, then the \emph{extended Gelfand transformation}
  is the map $\widehat \argument \colon \R \to \Stetig_\approx\bigl( \LocDomains(\R) \bigr),$
  $r \mapsto \widehat r$ that assigns to every element $r\in \R$ the $\approx$\=/equivalence class of the function
  $r_s \in \Stetig( \Open_{s<\infty} )$ from the previous Lemma~\ref{lemma:equivalentReps},
  where $s\in \Loc(\R)$ is any element satisfying $r/s\in\R^\bd_\loc.$
\end{definition}

\begin{proposition} \label{proposition:widehat}
  Let $\R$ be a partially ordered commutative ring, then the extended Gelfand transformation
  $\widehat \argument \colon \R \to \Stetig_\approx\bigl(\LocDomains(\R)\bigr)$ is a positive ring morphism.
\end{proposition}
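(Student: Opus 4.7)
The plan is to verify the ring-homomorphism axioms and positivity by unwinding Definition~\ref{definition:widehat} (which implicitly requires $\R$ to be localizable) and exploiting two facts: the representative $r_s$ of $\widehat{r}$ does not depend on the choice of $s$ by Lemma~\ref{lemma:equivalentReps}, and $\Loc(\R)$ is closed under multiplication.

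The first step I would take is a common-denominator observation: given $r_1, r_2 \in \R$ with $-s_i \le r_i \le s_i$ for some $s_i \in \Loc(\R)$, the product $s \coloneqq s_1 s_2 \in \Loc(\R)$ satisfies $-s \le r_i \le s$ for both $i$. Indeed, $s - r_i = s_1(s_2 - 1) + (s_1 - r_i) \in \R^+$ because $s_2 - 1, s_1 - r_i \in \R^+$, $s_1 \in \Loc(\R) \subseteq \R^+$, and $\R^+$ is closed under multiplication; symmetrically $s + r_i \in \R^+$. Iterating, any finite family $r_1, \dots, r_n \in \R$ admits a common $s \in \Loc(\R)$ with $r_i/s \in \R^\bd_\loc$ for all $i$.

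With this tool, I would verify additivity by picking a common such $s$ for a given pair $r, r' \in \R$: on $\Open_{s<\infty}$ both $r_s$ and $r'_s$ are defined, and since each $\varphi$ is a ring morphism,
\[
  (r+r')_s(\varphi) = \varphi(1/s)^{-1} \varphi\bigl((r+r')/s\bigr) = r_s(\varphi) + r'_s(\varphi),
\]
so $\widehat{r+r'} = \widehat{r} + \widehat{r'}$. For multiplicativity, I would pick $s, s' \in \Loc(\R)$ separately with $r/s, r'/s' \in \R^\bd_\loc$; then $(rr')/(ss') = (r/s)(r'/s') \in \R^\bd_\loc$, and on $\Open_{ss'<\infty} = \Open_{s<\infty} \cap \Open_{s'<\infty}$ (by Proposition~\ref{proposition:openQS}),
\[
  (rr')_{ss'}(\varphi) = \varphi\bigl(1/(ss')\bigr)^{-1} \varphi\bigl(rr'/(ss')\bigr) = r_s(\varphi)\, r'_{s'}(\varphi),
\]
so $\widehat{rr'} = \widehat{r}\,\widehat{r'}$. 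Unit preservation is immediate: with $s = 1$ one has $1_1(\varphi) = \varphi(1)^{-1}\varphi(1) = 1$ on $\Open_{1<\infty} = \KSpace(\R)$, hence $\widehat{1} = [\Unit]$.

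Finally, for positivity, if $r \in \R^+$ and $s \in \Loc(\R)$ satisfies $r/s \in \R^\bd_\loc$, then $r/s \in (\R^\bd_\loc)^+$ by the characterization of $\R^+_\loc$ established in the discussion preceding Proposition~\ref{proposition:loc}. Consequently $\varphi(r/s) \ge 0$ and $\varphi(1/s) > 0$ for every $\varphi \in \Open_{s<\infty}$, so $r_s \ge 0$ pointwise on $\Open_{s<\infty} \in \LocDomains(\R)$, which gives $\widehat{r} \in \Stetig_\approx(\LocDomains(\R))^+$. The argument is essentially bookkeeping; the only point requiring care is to ensure the chosen denominators lie in $\Loc(\R)$ rather than merely in $\R^+$, so that the associated open sets belong to $\LocDomains(\R)$---precisely what the common-denominator observation guarantees.
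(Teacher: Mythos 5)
Your proof is correct and follows essentially the same route as the paper: the same pointwise identities for sums and products on the sets $\Open_{s<\infty}$ (using Proposition~\ref{proposition:openQS} and that each $\varphi$ is a ring morphism), and the same positivity argument via $\varphi(1/s)>0$ and $\varphi(r/s)\ge 0$ for $r\in\R^+$. The only divergence is your common-denominator lemma for additivity, which the paper sidesteps by computing with the product denominator $(p+r)_{qs}$ directly (and note your decomposition $s-r_i=s_1(s_2-1)+(s_1-r_i)$ is literally valid only for $i=1$; for $i=2$ swap the roles of $s_1,s_2$ --- a trivial repair, and the lemma is in any case dispensable).
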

\begin{proof}
  Clearly $\widehat 1$ is the equivalence class of the constant $1$-function on $\KSpace(\R)$.
  Given $r\in \R^+$ and $s\in \Loc(\R)$ such that $r/s \in \R^\bd_\loc,$
  then $\varphi(1/s)^{-1} \varphi(r/s) \ge 0$ for all $\varphi \in \Open_{s < \infty}$
  because $1/s,r/s \in (\R^\bd_\loc)^+,$ so $\widehat r \in \Stetig_\approx(\LocDomains(\R))^+.$
  Now consider $p,r\in \R$ and $q,s \in \Loc(\R)$ such that $p/q,r/s\in \R^\bd_\loc$.
  Then for all $\varphi\in \Open_{q<\infty} \cap \Open_{s<\infty}$ the identities
  \begin{align*}
    \varphi(1/q)^{-1}\varphi(p/q) + \varphi(1/s)^{-1}\varphi(r/s)
    &=
    \varphi(1/q)^{-1}\varphi(1/s)^{-1}\bigl(\varphi(1/s)\varphi(p/q) + \varphi(1/q)\varphi(r/s)\bigr)
    \\
    &=
    \varphi\bigl(1/(qs)\bigr)^{-1} \varphi\bigl((p+r)/(qs)\bigr)
  \shortintertext{and}
    \varphi(1/q)^{-1}\varphi(p/q) \, \varphi(1/s)^{-1}\varphi(r/s)
    &=
    \varphi\bigl(1/(qs)\bigr)^{-1} \varphi\bigl((pr)/(qs)\bigr)
  \end{align*}
  hold, i.e.~$p_q(\varphi) + r_s(\varphi) = (p+r)_{qs}(\varphi)$ and $p_q(\varphi) r_s(\varphi) = (pr)_{qs}(\varphi).$
  So $\widehat{p}+\widehat{r} = \widehat{p{+}r}$ and $\widehat{p}\,\widehat{r} = \widehat{pr}.$
\end{proof}

While the extended Gelfand transformation is not an order embedding in general, we can precisely describe the defect:

\begin{theorem} \label{theorem:positivity}
  Let $\R$ be a localizable partially ordered commutative ring with $\NN \subseteq \Loc(\R).$
  Then $\KSpace(\R)$ is a compact Hausdorff space and
  \begin{equation}
    \set[\big]{r\in \R}{\widehat r \in \Stetig_\approx( \LocDomains(\R) )^+} = (\R^+)^\ddagger
  \end{equation}
  with the operation $\argument^\ddagger$ from \eqref{eq:ddagger}.
\end{theorem}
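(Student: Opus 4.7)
The theorem consists of two assertions---compactness of $\KSpace(\R)$ and the characterization $\{r : \widehat r \ge 0\} = (\R^+)^\ddagger$---which I would handle separately. For compactness, I invoke Tychonoff on $\prod_{a \in \R^\bd_\loc}[-n_a, n_a]$, into which $\KSpace(\R)$ embeds as a closed subset via $\varphi \mapsto (\varphi(a))_{a}$. The embedding is well-defined because $\NN \subseteq \Loc(\R)$ forces $\QQ \subseteq \R^\bd_\loc$ with $\varphi$ fixing $\QQ$, so $-n \le a \le n$ implies $\varphi(a) \in [-n, n]$; the defining ring-morphism and positivity conditions translate to closed equations and inequalities in the coordinates, and Hausdorffness is immediate from separation by coordinate maps.

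For the easy inclusion $(\R^+)^\ddagger \subseteq \{r : \widehat r \ge 0\}$, given $r \in (\R^+)^\ddagger$ with witness $h$, I would choose a common bound $s \in \Loc(\R)$ for $r$ and $h$ (by multiplying individual bounds) so that $(kr+h)/s \in (\R^\bd_\loc)^+$ for every $k \in \NN$; then $k\varphi(r/s) + \varphi(h/s) \ge 0$ for every $\varphi \in \Open_{s<\infty}$ and every $k$, and archimedeanness of $\RR$ forces $\varphi(r/s) \ge 0$ throughout $\Open_{s<\infty}$. The reverse inclusion I argue by contrapositive. Assume $r \notin (\R^+)^\ddagger$ and fix $s \in \Loc(\R)$ with $-s \le r \le s$. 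Given any $A = \Open_{t<\infty} \in \LocDomains(\R)$ contained in $\Open_{s<\infty}$, I consider $a := r/(st) \in \R^\bd_\loc$. Were $a$ in $((\R^\bd_\loc)^+)^\ddagger$, any witness would be bounded by some $m \in \NN$, yielding $ka + m \in (\R^\bd_\loc)^+$ for all $k$; multiplying through by the localizable $st$ would give $kr + mst \in \R^+$ for all $k$, contradicting $r \notin (\R^+)^\ddagger$. So $a \notin ((\R^\bd_\loc)^+)^\ddagger$, and invoking the Positivstellensatz below yields $\varphi \in \KSpace(\R)$ with $\varphi(a) = \varphi(r/s)\varphi(1/t) < 0$. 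Since $\varphi(1/t) \ge 0$ (as $1/t \ge 0$), the product's negativity forces $\varphi(1/t) > 0$ (placing $\varphi \in A$) and $\varphi(r/s) < 0$; and because $A \subseteq \Open_{s<\infty}$ also gives $\varphi(1/s) > 0$, one concludes $r_s(\varphi) < 0$.

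The main obstacle is the archimedean Positivstellensatz: \emph{if $a \in \R^\bd_\loc$ satisfies $\varphi(a) \ge 0$ for every $\varphi \in \KSpace(\R)$, then $a \in ((\R^\bd_\loc)^+)^\ddagger$}. I would prove it by viewing $\R^\bd_\loc$ as a $\QQ$-vector space with order unit $1$ and its order-unit seminorm, in which $C := ((\R^\bd_\loc)^+)^\ddagger$ is a closed convex cone. If $a \notin C$, Hahn--Banach delivers a continuous $\QQ$-linear functional $\ell$ with $\ell(C) \ge 0$ and $\ell(a) < 0$; a short argument using the order-unit bound forces $\ell(1) > 0$, and normalizing yields a state. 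The state space $S = \{\ell : \ell(C) \ge 0,\ \ell(1) = 1\}$ is weak-$^*$ compact convex by Banach--Alaoglu, so Bauer's minimum principle ensures $\ell \mapsto \ell(a)$ attains its minimum at an extreme state $\varphi$ with $\varphi(a) < 0$. The decisive input---and the exclusive place where localizability is used---is that extreme states on $\R^\bd_\loc$ are automatically multiplicative, hence lie in $\KSpace(\R)$; this is the content of \cite[Thm.~4.20]{schoetz:gelfandNaimarkTheorems}.
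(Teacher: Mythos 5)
Your proposal is sound in outline, but it reaches the key input by a genuinely different route than the paper. The paper deduces both the compactness of $\KSpace(\R)$ and the positivity criterion from the archimedean Positivstellensatz of Krivine/Marshall (\cite{krivine:positivstellensatz}, \cite[Thm.~5.7.2]{marshall:positivePolynomialsAndSumsOfSquares}) applied to the preordering $(\R^\bd_\loc)^+$, and then proves the two inclusions directly: for $\widehat r \in \Stetig_\approx(\LocDomains(\R))^+$ it applies the strict Positivstellensatz to $1/n + r/(qs)$, where $\Open_{q<\infty}$ is a domain witnessing positivity, to get $nr+qs \in \R^+$ for all $n$; for $r \in (\R^+)^\ddagger$ with witness $s$ it takes a single $t\in\Loc(\R)$ with $1+r^2+s^2 \le t$ and shows $r_t \ge 0$ on all of $\Open_{t<\infty}$. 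You instead prove compactness by Tychonoff (which is how the cited result is proved anyway), re-derive the Positivstellensatz in its closure form $\set{a\in\R^\bd_\loc}{\varphi(a)\ge 0\ \forall \varphi} = ((\R^\bd_\loc)^+)^\ddagger$ by the Kadison--Dubois route (separation from the closed cone, Bauer's principle, multiplicativity of extreme states), and run the hard inclusion contrapositively, exhibiting for each candidate domain $\Open_{t<\infty} \subseteq \Open_{s<\infty}$ a character $\varphi \in \Open_{t<\infty}$ with $r_s(\varphi) < 0$ via the element $r/(st)$. Your easy inclusion (common bound obtained by multiplying bounds, using $\Loc(\R)\subseteq\R^+$) and the contrapositive bookkeeping both check out; the trade-off is that your argument is essentially self-contained where the paper outsources the analytic core to a classical theorem.

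Two steps need repair, though neither changes the architecture. First, Hahn--Banach separation (and Alaoglu/Bauer) are theorems about real locally convex spaces, while $\R^\bd_\loc$ is only a $\QQ$-algebra: you must pass to the real order-unit completion of $\R^\bd_\loc$ (or tensor with $\RR$) and check that $a$ remains outside the closure of the cone there; this works because $((\R^\bd_\loc)^+)^\ddagger = \set{a\in\R^\bd_\loc}{a+\epsilon \in (\R^\bd_\loc)^+ \text{ for all }\epsilon\in\QQ,\ \epsilon>0}$, but it has to be argued. (Compactness of the state space can be had directly via Tychonoff, so only the separation step genuinely needs real scalars.) Second, your remark that localizability is used exclusively in the multiplicativity of extreme states is not accurate, and \cite[Thm.~4.20]{schoetz:gelfandNaimarkTheorems} does not literally apply, as it concerns ordered $^*$-algebras rather than the $\QQ$-algebra $\R^\bd_\loc$; what you actually need is the classical Kadison--Dubois argument, which uses only that $(\R^\bd_\loc)^+$ is closed under multiplication, contains the positive rationals, and has $1$ as order unit. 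Localizability of $\R$ enters your proof elsewhere: in forming $\R^\bd_\loc$ with $\QQ\subseteq\R^\bd_\loc$, in producing the bounds $s,t\in\Loc(\R)$, and in every translation of the form ``$p/q\in\R^+_\loc$ if and only if $p\in\R^+$'' (for instance, your step of multiplying through by the localizable $st$).
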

\begin{proof}
  This is an application of the Positivstellensatz for archimedean preorderings \cite[Sec.~``Préordres archimédiens'']{krivine:positivstellensatz}
  (see also e.g.~\cite[Sec.~5]{marshall:positivePolynomialsAndSumsOfSquares} for a more modern reference) to the preordering $(\R^\bd_\loc)^+$ of $\R^\bd_\loc$:
  From $\NN \subseteq \Loc(\R)$ it follows that $\QQ \subseteq \R^\bd_\loc.$ By definition of $\R^\bd_\loc,$
  for every $a \in \R^\bd_\loc$ there exists $n\in \NN_0$ such that $-n \le a \le n$ with respect to the order of
  $\R_\loc$ or, equivalently, the order of $\R^\bd_\loc.$ In other words, the preordering $(\R^\bd_\loc)^+$ of $\R^\bd_\loc$
  is ``archimedean'' in the sense of \cite[Def.~5.2.1]{marshall:positivePolynomialsAndSumsOfSquares}. Therefore, by \cite[Thm.~5.7.2]{marshall:positivePolynomialsAndSumsOfSquares}, 
  $\KSpace(\R)$ is a compact Hausdorff space and, whenever an element $a \in \R^\bd_\loc$ fulfils $\varphi(a) > 0$ for all $\varphi \in \KSpace(\R),$
  then $a \in (\R^\bd_\loc)^+ = \R^+_\loc \cap \R^\bd_\loc.$
  
  So assume that $\widehat r \in \Stetig_\approx( \LocDomains(\R) )^+,$ i.e.~for any $s\in \Loc(\R)$
  fulfilling $r/s \in \R^\bd_\loc$ the representative $r_s$ of $\widehat r$ like in Lemma~\ref{lemma:equivalentReps} fulfils $0\lesssim r_s.$
  This means that there is $q \in \Loc(\R)$
  such that $\Open_{q<\infty} \subseteq \Open_{s<\infty}$ and
  $0 \le r_s(\varphi) = \varphi(1/s)^{-1} \varphi(r/s)$ for all $\varphi \in \Open_{q<\infty},$
  so $0 \le \varphi(r/s)$ for $\varphi \in \Open_{q<\infty}.$
  Then $r/(qs) \in \R^\bd_\loc$ and
  $\varphi(r/(qs)) = \varphi(1/q) \varphi(r/s) \ge 0$ for all $\varphi \in \KSpace(\R)$:
  Indeed, by positivity of $\varphi$ either $\varphi(1/q) = 0$ or $\varphi(1/q) > 0$ holds, and in the latter case $\varphi \in \Open_{q<\infty}$
  so that $\varphi(r/s)\ge 0.$
  Therefore $\varphi\bigl(1/n+r/(qs)\bigr) \ge 1/n > 0$ for all $n\in \NN$ and all $\varphi \in \KSpace(\R),$
  so the Positivstellensatz for archimedean preorderings shows that $(nr+qs)/(nqs) = 1/n+r/(qs) \in (\R^\bd_\loc)^+ \subseteq \R^+_\loc$
  for all $n\in \NN,$ i.e.~$nr+qs \in \R^+$ for all $n\in \NN$ by Proposition~\ref{proposition:loc}, and therefore $r \in (\R^+)^\ddagger.$

  Conversely, assume that $r \in (\R^+)^\ddagger,$ then there is $s \in \R$ such that $nr+s \in \R^+$ holds for all $n\in \NN.$
  By localizability of $\R$ there exists $t \in \Loc(\R)$ such that $1+r^2 + s^2 \le t.$ Then
  $-t \le r \le t$ because $2t\pm 2r = t + (t-(1+r^2+s^2)) + (1\pm r)^2 + s^2 \in \R^+$ for both choices of the sign $\pm$
  and because $2\in \Loc(\R)$ by assumption, and similarly $-t \le s \le t.$ This shows that $r/t, s/t \in \R^\bd_\loc$
  and therefore also $(nr+ s)/t = n(r/t) + s/t \in \R^\bd_\loc$ for all $n \in \NN_0.$ Consequently, every $\varphi\in \KSpace(\R)$
  fulfils $\varphi(r/t) = \frac{1}{n} \varphi\bigl((nr+s)/t\bigr) - \frac{1}{n} \varphi(s/t) \ge -\frac{1}{n} \varphi(s/t)$
  for all $n\in \NN,$ and therefore $\varphi(r/t) \ge 0.$ It follows that the representative
  $r_t \colon \Open_{t<\infty} \to \RR$ of $\widehat r$
  is pointwise positive on $\Open_{t<\infty},$ which shows that $\widehat r \in \Stetig_\approx( \LocDomains(\R) )^+.$
\end{proof}
This result is similar to some of the strict Positivstellensätze for preorderings like \cite{marshall:extendingArchimedeanPositivstellensatzToTheNonCOmpactCase},
but without any assumption of boundedness or finiteness.

\begin{corollary}
  Let $\R$ be a localizable partially ordered commutative ring with $\NN \subseteq \Loc(\R).$
  Then the kernel of the extended Gelfand transformation $\widehat{\argument} \colon \R \to \Stetig_\approx( \LocDomains(\R) )^+$ is
  \begin{equation}
    \ker \widehat\argument = \set[\big]{r\in \R}{\textup{there is $s\in \R^+$ such that }{-s} \le nr \le s\textup{ for all }n\in \NN}
  \end{equation}
\end{corollary}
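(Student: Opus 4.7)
The plan is to split the equation $\widehat r = 0$ into the pair of inequalities $\widehat r \ge 0$ and $-\widehat r \ge 0$ (using antisymmetry of the order on the partially ordered commutative ring $\Stetig_\approx(\LocDomains(\R))$) and apply Theorem~\ref{theorem:positivity} separately to $r$ and to $-r$. This reduces the corollary to a purely algebraic observation that combines the two ``majorants'' $h_+, h_-$ arising from the definition of $\argument^\ddagger$ in \eqref{eq:ddagger} into a single positive majorant $s$.

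For the easy inclusion: given $r\in \R$ and $s\in \R^+$ satisfying $-s \le nr \le s$ for all $n\in \NN$, we read off $nr + s \in \R^+$ and $n(-r) + s \in \R^+$ for all $n$, so both $r$ and $-r$ lie in $(\R^+)^\ddagger$. Theorem~\ref{theorem:positivity} then yields $\widehat r \ge 0$ and $-\widehat r = \widehat{-r} \ge 0$, hence $\widehat r = 0$.

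For the reverse inclusion, assume $r\in \ker\widehat\argument$. Then $\widehat r \ge 0$ and $-\widehat r \ge 0$, so Theorem~\ref{theorem:positivity} produces $h_+, h_- \in \R$ with $nr + h_+ \in \R^+$ and $-nr + h_- \in \R^+$ for all $n \in \NN$. The only real step is to replace $h_+$ and $h_-$ by a common element $s \in \R^+$. By localizability of $\R$ there exist $s_+, s_- \in \Loc(\R) \subseteq \R^+$ with $-s_+ \le h_+ \le s_+$ and $-s_- \le h_- \le s_-$; then $s \coloneqq s_+ + s_- \in \R^+$ satisfies
\begin{equation*}
  nr + s = (nr + h_+) + (s - h_+) \in \R^+
  \quad\text{and}\quad
  s - nr = (h_- - nr) + (s - h_-) \in \R^+
\end{equation*}
for all $n\in \NN$, since in each case both summands lie in $\R^+$. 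This yields $-s \le nr \le s$ for all $n\in \NN$, as desired.

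Given that Theorem~\ref{theorem:positivity} already does all the substantial work, no step poses a real obstacle; the mildest care needed is only in the final symmetrization, where localizability is invoked to ensure that $h_+$ and $h_-$ can be dominated by a single element of $\R^+$ rather than just of $\R$.
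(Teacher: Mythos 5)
Your proposal is correct and takes essentially the same route as the paper: via Theorem~\ref{theorem:positivity} one has $\ker\widehat\argument = (\R^+)^\ddagger \cap (-(\R^+)^\ddagger)$, and the only substantive step is merging the two majorants $h_+,h_-$ into a single element of $\R^+.$ The sole (cosmetic) difference is that you invoke localizability to dominate $h_\pm$ by elements of $\Loc(\R)\subseteq\R^+,$ whereas the paper takes $s \coloneqq h_+ + h_-$ directly, which already lies in $\R^+$ because $h_+ + h_- = (h_+ + r) + (h_- - r).$
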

\begin{proof}
  By the previous Theorem~\ref{theorem:positivity}, $\ker \widehat\argument = (\R^+)^\ddagger \cap (-(\R^+)^\ddagger).$
  Consider any $r \in (\R^+)^\ddagger \cap (-(\R^+)^\ddagger),$ then by definition there are $e,f\in \R$ such that $e+nr \in \R^+$ and $f-nr \in \R^+$ for all $n\in \NN.$
  So set $s \coloneqq e+f = (e+r)+(f-r) \in \R^+,$ then $-s = -e -f \le -e -r \le nr \le  f-r \le e+f = s$ for all $n\in \NN.$
  This proves the inclusion $\subseteq,$ and the converse inclusion follows immediately from the identity
  $\ker \widehat\argument = (\R^+)^\ddagger \cap (-(\R^+)^\ddagger)$ and from the definition of the operation $\argument^\ddagger.$
\end{proof}

\section{Representation by almost everywhere defined continuous functions} \label{sec:main}
We can almost put all the pieces together and prove the main theorem. The only missing part is the following observation:

\begin{proposition} \label{proposition:dense}
  Let $\R$ be a localizable archimedean partially ordered commutative ring and consider any element $q\in \Loc(\R).$
  Then $\Open_{q<\infty}$ is dense in $\KSpace(\R).$
\end{proposition}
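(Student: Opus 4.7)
The plan is to argue by contradiction, using Proposition~\ref{proposition:neighbourhoods} to reduce to a single basic open set and then deriving a contradiction by combining the Positivstellensatz (as used in the proof of Theorem~\ref{theorem:positivity}) with the archimedean property of $\R.$ First, note that Lemma~\ref{lemma:archLoc} guarantees $\NN \subseteq \Loc(\R),$ so Proposition~\ref{proposition:neighbourhoods} applies. Fix any $\varphi \in \KSpace(\R).$ If $\varphi(1/q) > 0$ then $\varphi \in \Open_{q<\infty}$ and there is nothing to show; so assume $\varphi(1/q) = 0$ and let $U$ be an arbitrary neighbourhood of $\varphi.$ By Proposition~\ref{proposition:neighbourhoods} there exists $b\in \R^\bd_\loc$ with $\varphi(b) < 0$ and $\Open_{b, {]{-\infty},0[}} \subseteq U,$ and it suffices to find some $\psi \in \Open_{b,{]{-\infty},0[}} \cap \Open_{q<\infty}.$

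Suppose for contradiction that no such $\psi$ exists. Then every $\psi \in \KSpace(\R)$ satisfies the implication $\psi(1/q) > 0 \Rightarrow \psi(b) \ge 0,$ because the opposite failure would produce the desired $\psi.$ Consider the element $b/q \in \R^\bd_\loc.$ If $\psi(1/q) = 0$ then $\psi(b/q) = \psi(b) \psi(1/q) = 0,$ and if $\psi(1/q) > 0$ then $\psi(b) \ge 0$ and hence $\psi(b/q) \ge 0.$ In either case $\psi(b/q) \ge 0$ for all $\psi \in \KSpace(\R),$ so $\psi\bigl(b/q + 1/n\bigr) \ge 1/n > 0$ for every $n \in \NN$ and every $\psi \in \KSpace(\R).$

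At this point, applying the Positivstellensatz for archimedean preorderings to the preordering $(\R^\bd_\loc)^+$ of $\R^\bd_\loc$ (exactly as in the proof of Theorem~\ref{theorem:positivity}) yields $b/q + 1/n \in (\R^\bd_\loc)^+ \subseteq \R^+_\loc$ for every $n \in \NN.$ Writing $b = r/s$ with $r \in \R$ and $s \in \Loc(\R),$ the sum computation in $\R_\loc$ gives $b/q + 1/n = (nr + sq)/(nsq),$ and by the characterization of $\R^+_\loc$ recalled in Proposition~\ref{proposition:loc} this forces $nr + sq \in \R^+$ for every $n \in \NN.$ The archimedean property of $\R$ then yields $r \in \R^+,$ so $b = r/s \in \R^+_\loc$ and in particular $\varphi(b) \ge 0,$ contradicting $\varphi(b) < 0.$ Hence $U \cap \Open_{q<\infty}$ is non-empty, and $\Open_{q<\infty}$ is dense in $\KSpace(\R).$

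The only delicate step is the passage from the pointwise positivity $\psi(b/q) \ge 0$ to membership in $\R^+_\loc$: this is where the strict‐Positivstellensatz trick of adding $1/n$ is essential, and where one must verify that the resulting conclusion lifts from $\R^\bd_\loc$ back to $\R$ so the archimedean hypothesis can be applied. Everything else is bookkeeping with the various definitions.
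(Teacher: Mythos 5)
Your proof is correct and follows essentially the same route as the paper: reduce via Proposition~\ref{proposition:neighbourhoods} to a basic open set $\Open_{b,{]{-\infty},0[}} \subseteq U$, deduce $\psi(b/q) \ge 0$ for all $\psi \in \KSpace(\R),$ and combine the archimedean Positivstellensatz with the archimedean property of $\R$ to force $b \in (\R^\bd_\loc)^+,$ contradicting $\varphi(b) < 0.$ The only cosmetic difference is that you inline the relevant half of Theorem~\ref{theorem:positivity} by applying the Positivstellensatz to $b/q + 1/n$ directly, where the paper instead observes $\widehat r \in \Stetig_\approx\bigl(\LocDomains(\R)\bigr)^+$ and cites Theorem~\ref{theorem:positivity} as a black box.
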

\begin{proof}
  Note that $\NN \subseteq \Loc(\R)$ by Lemma~\ref{lemma:archLoc} so that Proposition~\ref{proposition:neighbourhoods} applies.
  Assume to the contrary that there are $\varphi \in \KSpace(\R)$ and a neighbourhood $U$ of $\varphi$ in $\KSpace(\R)$
  such that $U \cap \Open_{q<\infty} = \emptyset.$ By Proposition~\ref{proposition:neighbourhoods}
  there is $b = r/s \in \R^\bd_\loc$ with $r\in \R,$ $s\in \Loc(\R)$ such that $\varphi \in \Open_{b,{]{-\infty},0[}} \subseteq U.$
  In particular, $\Open_{b,{]{-\infty},0[}} \cap \Open_{q<\infty} = \emptyset,$
  so $\psi\bigl(r/(sq)\bigr) = \psi(b) \psi(1/q) \ge 0$ for all $\psi \in \Open_{q<\infty}$.
  As $\Open_{sq<\infty} = \Open_{s<\infty} \cap \Open_{q<\infty} \subseteq \Open_{q<\infty}$ by Proposition~\ref{proposition:openQS},
  the representative $r_{sq} \in \widehat r$, $\Open_{sq<\infty} \ni \psi \mapsto \psi(1/(sq))^{-1} \psi\bigl(r/(sq)\bigr) \in \RR$
  fulfils $r_{sq} \ge 0$ pointwise, so $\widehat r \in \Stetig_\approx\bigl( \LocDomains(\R)\bigr)^+.$
  Theorem~\ref{theorem:positivity} now shows that $r \in (\R^+)^\ddagger.$ As $\R$ is archimedean by assumption,
  this means that $r \in \R^+,$ i.e.~$b \in (\R^\bd_\loc)^+.$ But this would
  imply $\psi(b) \ge 0$ for all $\psi \in \KSpace(\R),$ in particular $\varphi(b) \ge 0,$ which is impossible 
  because $\varphi \in \Open_{b,{]{-\infty},0[}}.$
\end{proof}
As an immediate consequence we obtain:

\begin{corollary} \label{corollary:dense}
  Let $\R$ be a localizable archimedean partially ordered commutative ring,
  then, in the sense of Proposition~\ref{proposition:alEvSubring} and Equation \eqref{eq:alEvSubring},
  $\Stetig_\approx\bigl(\LocDomains(\R)\bigr) \subseteq \AlEv\bigl(\KSpace(\R)\bigr).$
  So by slight abuse of notation we can treat the extended Gelfand transformation 
  as a positive ring morphism $\widehat{\argument} \colon \R \to \AlEv\bigl(\KSpace(\R)\bigr).$
\end{corollary}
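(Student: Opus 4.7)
The plan is to simply verify the hypotheses of Proposition~\ref{proposition:alEvSubring} with $\Dom' \coloneqq \LocDomains(\R)$ and $\Dom$ the set of dense open subsets of $\KSpace(\R)$, and then compose the resulting embedding with the extended Gelfand transformation from Definition~\ref{definition:widehat}.

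First I would observe that the hypothesis $\NN \subseteq \Loc(\R)$ needed by several of the preceding results is automatic here: by Lemma~\ref{lemma:archLoc}, any localizable archimedean partially ordered commutative ring satisfies $\NN \subseteq \Loc(\R)$. Consequently, Theorem~\ref{theorem:positivity} applies and gives that $\KSpace(\R)$ is a compact Hausdorff space, while Proposition~\ref{proposition:dense} applies to every $q \in \Loc(\R)$.

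Next I would check that $\LocDomains(\R) \subseteq \Dom$. By Proposition~\ref{proposition:openQS}, $\LocDomains(\R)$ is an admissible set of domains on $\KSpace(\R)$, and each element $\Open_{s<\infty} = \Open_{1/s,{]0,\infty[}}$ is open in $\KSpace(\R)$ by the definition of the weak-$^*$-topology. By Proposition~\ref{proposition:dense}, each $\Open_{s<\infty}$ with $s \in \Loc(\R)$ is also dense in $\KSpace(\R)$, so $\LocDomains(\R)$ is indeed contained in the admissible set of dense open subsets.

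Finally, Proposition~\ref{proposition:alEvSubring} yields a canonical order embedding $\check\iota \colon \Stetig_\approx(\LocDomains(\R)) \to \AlEv(\KSpace(\R))$, and via the identification \eqref{eq:alEvSubring} this is the asserted inclusion. Composing with the positive ring morphism $\widehat{\argument} \colon \R \to \Stetig_\approx(\LocDomains(\R))$ from Proposition~\ref{proposition:widehat} produces the positive ring morphism $\widehat{\argument} \colon \R \to \AlEv(\KSpace(\R))$. There is no genuine obstacle here — all of the real work has already been done in Proposition~\ref{proposition:dense}, whose role is precisely to supply the density that makes $\LocDomains(\R)$ fit inside the admissible set used to define $\AlEv(\KSpace(\R))$.
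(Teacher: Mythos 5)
Your proposal is correct and follows exactly the route the paper intends: the corollary is stated there as an immediate consequence of Proposition~\ref{proposition:dense}, with Lemma~\ref{lemma:archLoc} supplying $\NN \subseteq \Loc(\R)$ and Proposition~\ref{proposition:alEvSubring} providing the order embedding into $\AlEv\bigl(\KSpace(\R)\bigr)$. Your write-up just makes these implicit verifications explicit, so there is nothing to add.
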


We finally state and prove the main theorem:

\begin{theorem} \label{theorem:main}
  Let $\R$ be a partially ordered commutative ring, then the following are equivalent:
  \begin{enumerate}
    \item \label{item:main:iso}
      There exists a compact Hausdorff space $X$ such that $\R$ is isomorphic (as a partially ordered commutative ring)
      to a subring of the partially ordered commutative ring $\AlEv(X)$ of almost everywhere defined continuous
      $\RR$\=/valued functions on $X.$
    \item \label{item:main:orderembed}
      $\R$ is localizable and the extended Gelfand transformation $\widehat{\argument} \colon \R \to \Stetig_\approx\bigl(\LocDomains(\R)\bigr)$
      is an order embedding.
    \item \label{item:main:archloc} $\R$ is archimedean and localizable.
    \item \label{item:main:archlocStrong} $\R$ is archimedean and strongly localizable.
  \end{enumerate}
  If these equivalent statements hold, then $\Stetig_\approx\bigl(\LocDomains(\R)\bigr) \!\subseteq\! \AlEv\bigl(\KSpace(\R)\bigr)$
  like in the previous Corollary~\ref{corollary:dense}.
\end{theorem}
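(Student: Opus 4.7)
The plan is to close the cycle \refitem{item:main:iso} $\Rightarrow$ \refitem{item:main:archlocStrong} $\Rightarrow$ \refitem{item:main:archloc} $\Rightarrow$ \refitem{item:main:orderembed} $\Rightarrow$ \refitem{item:main:iso}, and then to dispatch the trailing clause. The two easy links come essentially for free: \refitem{item:main:iso} $\Rightarrow$ \refitem{item:main:archlocStrong} is Proposition~\ref{proposition:alEvArchLoc} applied to the subring of $\AlEv(X)$ isomorphic to $\R$, while \refitem{item:main:archlocStrong} $\Rightarrow$ \refitem{item:main:archloc} is Proposition~\ref{proposition:weakstronloc}.

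For \refitem{item:main:archloc} $\Rightarrow$ \refitem{item:main:orderembed}, I would first note that archimedean plus localizable forces $\NN \subseteq \Loc(\R)$ via Lemma~\ref{lemma:archLoc}, which is precisely the missing hypothesis of Theorem~\ref{theorem:positivity}. That theorem identifies $\set{r\in\R}{\widehat r \in \Stetig_\approx(\LocDomains(\R))^+}$ with $(\R^+)^\ddagger$, and the archimedean property is exactly the statement $(\R^+)^\ddagger = \R^+$, so the preimage of the positive cone under $\widehat\argument$ equals $\R^+$. Together with Proposition~\ref{proposition:widehat} (that $\widehat\argument$ is a positive ring morphism) this yields the required order embedding.

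The hard part will be \refitem{item:main:orderembed} $\Rightarrow$ \refitem{item:main:iso}, because from \refitem{item:main:orderembed} alone we have neither $\NN \subseteq \Loc(\R)$ nor the archimedean property, both of which are needed before Corollary~\ref{corollary:dense} can be invoked. I would bootstrap both out of the order embedding as follows. Given $n\in\NN$ and $r\in\R$ with $rn\in\R^+$, applying $\widehat\argument$ gives $n\widehat r = \widehat{rn} \in \Stetig_\approx(\LocDomains(\R))^+$. Since $\Stetig_\approx(\LocDomains(\R))$ is strongly localizable by Proposition~\ref{proposition:functionsLocalizable}, the element $n$ lies in $1 + \Stetig_\approx(\LocDomains(\R))^+$ and is therefore localizable there, which forces $\widehat r \in \Stetig_\approx(\LocDomains(\R))^+$, and the order-embedding property then pulls this back to $r \in \R^+$. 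Hence $\NN \subseteq \Loc(\R)$. Theorem~\ref{theorem:positivity} now applies and yields both that $\KSpace(\R)$ is compact Hausdorff and that $(\R^+)^\ddagger$ coincides with the preimage of the positive cone under $\widehat\argument$, which by the order embedding is $\R^+$; so $\R$ is archimedean. We have thus upgraded \refitem{item:main:orderembed} to \refitem{item:main:archloc}, and Corollary~\ref{corollary:dense} then places $\Stetig_\approx(\LocDomains(\R))$ inside $\AlEv\bigl(\KSpace(\R)\bigr)$. Composing with $\widehat\argument$ realises $\R$ as a subring of $\AlEv\bigl(\KSpace(\R)\bigr)$, so $X \coloneqq \KSpace(\R)$ witnesses \refitem{item:main:iso}. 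The trailing clause $\Stetig_\approx(\LocDomains(\R)) \subseteq \AlEv(\KSpace(\R))$ is then exactly Corollary~\ref{corollary:dense} applied under the (now established) hypothesis \refitem{item:main:archloc}.
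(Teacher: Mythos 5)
Your proposal is correct and runs on the same machinery as the paper (Propositions~\ref{proposition:alEvArchLoc} and \ref{proposition:weakstronloc} for the easy implications, Theorem~\ref{theorem:positivity} plus the archimedean identity $(\R^+)^\ddagger=\R^+$ for the order-embedding step, and Corollary~\ref{corollary:dense} to land in $\AlEv\bigl(\KSpace(\R)\bigr)$), only organized as a single cycle \refitem{item:main:iso}~$\Rightarrow$~\refitem{item:main:archlocStrong}~$\Rightarrow$~\refitem{item:main:archloc}~$\Rightarrow$~\refitem{item:main:orderembed}~$\Rightarrow$~\refitem{item:main:iso} instead of the paper's arrangement, which derives \refitem{item:main:orderembed} and \refitem{item:main:iso} directly from \refitem{item:main:archloc} and proves \refitem{item:main:orderembed}~$\Rightarrow$~\refitem{item:main:archloc} separately. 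The one genuine point of divergence is your bootstrap of $\NN\subseteq\Loc(\R)$ out of statement \refitem{item:main:orderembed}: you push $rn\in\R^+$ through the positive morphism $\widehat{\argument}$, use strong localizability of $\Stetig_\approx\bigl(\LocDomains(\R)\bigr)$ (Proposition~\ref{proposition:functionsLocalizable}) to divide by $n$ there, and pull back along the order embedding; note in passing that $n\in 1+\R^+$ since localizability already gives $1\in\R^+$. This step is a real improvement in care: Theorem~\ref{theorem:positivity} formally carries the hypothesis $\NN\subseteq\Loc(\R)$, which under \refitem{item:main:orderembed} alone is not supplied by Lemma~\ref{lemma:archLoc} (no archimedean property is available yet), and the paper's own proof of \refitem{item:main:orderembed}~$\Rightarrow$~\refitem{item:main:archloc} invokes the theorem without verifying it; your argument closes that small gap while otherwise following the same route.
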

\begin{proof}
  Assume that $\R$ is archimedean and localizable. Consider $r\in \R$ such that $\widehat{r} \in \Stetig_\approx(\LocDomains(\R))^+\!,$
  then $r\in (\R^+)^\ddagger$ by Theorem~\ref{theorem:positivity}, so $r\in \R^+$ due to the assumption that $\R$
  is archimedean. This shows that the extended Gelfand transformation $\widehat{\argument} \colon \R \to \Stetig_\approx\bigl(\LocDomains(\R)\bigr)$
  is an order embedding. As the extended Gelfand transformation is always a ring morphism by Proposition~\ref{proposition:widehat},
  this means that $\R$ and its image under $\widehat{\argument}$ are isomorphic as partially ordered commutative rings.
  By the previous Corollary~\ref{corollary:dense}, $\Stetig_\approx\bigl(\LocDomains(\R)\bigr) \subseteq \AlEv\bigl(\KSpace(\R)\bigr),$
  so $\R$ is isomorphic as a partially ordered commutative ring to a subring of $\AlEv\bigl(\KSpace(\R)\bigr),$ with 
  $\KSpace(\R)$ a compact Hausdorff space by Theorem~\ref{theorem:positivity}. This shows that \refitem{item:main:archloc}
  implies both \refitem{item:main:orderembed} and \refitem{item:main:iso}.
  
  Conversely, we first show that \refitem{item:main:orderembed} implies \refitem{item:main:archloc}, so assume that
  \refitem{item:main:orderembed} holds. Then $\R$ is archimedean: Given $r,s\in \R$ such that $nr+s \in \R^+$
  for all $n\in \NN,$ i.e.~$r\in (\R^+)^\ddagger,$ then $\widehat{r} \in \Stetig_\approx( \LocDomains(\R))^+$
  by Theorem~\ref{theorem:positivity}. As $\widehat{\argument} \colon \R \to \Stetig_\approx\bigl(\LocDomains(\R)\bigr)$
  is an order embedding by assumption, it follows that $r\in \R^+.$
  
  Finally, \refitem{item:main:iso} implies \refitem{item:main:archlocStrong}, because for any compact Hausdorff space $X,$
  the partially ordered commutative ring $\AlEv(X)$ and all its subrings are archimedean and strongly localizable by Propositions~\ref{proposition:alEvArchLoc}.
  Proposition~\ref{proposition:weakstronloc} shows that \refitem{item:main:archlocStrong} implies \refitem{item:main:archloc}.
\end{proof}

\begin{remark}
  For the implication \refitem{item:main:archloc}~$\implies$~\refitem{item:main:iso} in the previous Theorem~\ref{theorem:main},
  the assumption that the multiplication of the archimedean localizable partially ordered commutative ring $\R$ is associative and commutative is
  redundant by \cite{schoetz:arxivAssociativityAndCommutativityOfPartiallyOrderedRings}, and the assumption that $\R^+$ contains all squares
  is redundant by Proposition~\ref{proposition:semiring}:
  Consider an archimedean partially ordered abelian group $\R$ endowed with a biadditive binary operation $\mu \colon \R \times \R \to \R$
  that admits a (necessarily unique) neutral element $1\in \R^+$ and that satisfies $\mu(u,v) \in \R^+$ for all $u,v \in \R^+.$
  We adapt localizability to this more general setting by assuming that for all $r\in \R$ there exists $s\in 1+\R^+$
  such that $-s \le r \le s$ and such that $s$ is localizable in the following sense: Whenever some element $r'\in \R$
  fulfils $\mu(r',s) \in \R^+$ or $\mu(s,r') \in \R^+,$ then $r' \in \R^+.$ Then the restriction of $\mu$ to $\R^+$ is associative and commutative by 
  \cite[Cor.~13, Props.~21-22]{schoetz:arxivAssociativityAndCommutativityOfPartiallyOrderedRings},
  and therefore $\mu$ is associative and commutative on whole $\R.$ So Proposition~\ref{proposition:semiring} shows that
  $r^2 \in \R^+$ for all $r\in \R$, i.e.~$\R$ in particular is a partially ordered commutative ring, and Theorem~\ref{theorem:main} applies.
\end{remark}

By Proposition~\ref{proposition:maxDomain}, every equivalence class of an almost everywhere defined continuous function $a$ on a topological space $X$
has a unique representative $a_{\max} \in a$ with maximal domain. This allows to decide whether or not that function can be defined at some given point of $X.$
We apply this to the extended Gelfand transformation:

\begin{definition}
  Let $\R$ be a localizable archimedean partially ordered commutative ring.
  Then for any positive extended character $\varphi \in \KSpace(\R)$ we define a map ${\widehat \varphi} \colon {\dom \widehat \varphi} \to \RR$
  with domain
  \begin{align}
    {\dom \widehat \varphi} &\coloneqq \set{r\in \R}{\varphi \in {\dom \widehat r_{\max}}}
  \shortintertext{that is given by}
    \widehat \varphi(r) &\coloneqq \widehat r_{\max}(\varphi)\quad\quad\text{for all $r\in {\dom \widehat \varphi}.$}
  \end{align}
  Here we identify $\Stetig_\approx\bigl(\LocDomains(\R)\bigr)$ with a subring
  of $\AlEv\bigl(\KSpace(\R)\bigr)$ as discussed in Proposition~\ref{proposition:alEvSubring} and Corollary~\ref{corollary:dense}
  and treat the extended Gelfand transformation as a map 
  $\widehat{\argument} \colon \R \to \Stetig_\approx\bigl(\LocDomains(\R)\bigr) \subseteq \AlEv\bigl(\KSpace(\R)\bigr).$
\end{definition}

\begin{lemma} \label{lemma:extendedCharakterEvaluation}
  Let $\R$ be a localizable archimedean partially ordered commutative ring and $s\in \Loc(\R),$
  then $\Open_{s<\infty} = \dom \widehat s_{\max}.$
\end{lemma}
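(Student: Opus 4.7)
The strategy is to exhibit a concrete representative of $\widehat s$ whose natural domain is $\Open_{s<\infty}$, and then use continuity and density to show that no other representative can extend strictly beyond this set.

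First I would choose a convenient representative. Since $\R$ is localizable we have $\Loc(\R)\subseteq \R^+$, hence $s\in \R^+$, hence $-s\le s\le s$, so $s/s=1/1\in \R^\bd_\loc$ and we may take $t\coloneqq s$ in Definition~\ref{definition:widehat}. This gives the representative $s_s\colon \Open_{s<\infty}\to\RR$, $\varphi\mapsto\varphi(1/s)^{-1}\varphi(s/s)=\varphi(1/s)^{-1}$, which is defined on all of $\Open_{s<\infty}$. Viewing $\widehat s\in \Stetig_\approx(\LocDomains(\R))$ as an element of $\AlEv(\KSpace(\R))$ via Proposition~\ref{proposition:alEvSubring} and Corollary~\ref{corollary:dense}, Proposition~\ref{proposition:maxDomain} immediately yields $\Open_{s<\infty}=\dom s_s\subseteq \dom \widehat s_{\max}$, which takes care of the inclusion $\subseteq$.

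For the reverse inclusion, fix any $\varphi_0\in \dom \widehat s_{\max}$. By Proposition~\ref{proposition:maxDomain} there is a representative $f\in\widehat s$, defined on some dense open $A\subseteq \KSpace(\R)$, with $\varphi_0\in A$. Since $f\approx s_s$ in $\AlEv(\KSpace(\R))$, there exists a dense open $B\subseteq A\cap \Open_{s<\infty}$ on which $f(\varphi)=s_s(\varphi)=\varphi(1/s)^{-1}$; equivalently, $f(\varphi)\,\varphi(1/s)=1$ for all $\varphi\in B$. The continuous function $h\colon A\to\RR$, $\varphi\mapsto f(\varphi)\,\varphi(1/s)-1$, thus vanishes on $B\cap A$. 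As $B$ is dense in $\KSpace(\R)$ and $A$ is open, $B\cap A$ is dense in $A$, so $h\equiv 0$ on $A$. In particular $f(\varphi_0)\,\varphi_0(1/s)=1$, forcing $\varphi_0(1/s)\neq 0$. Positivity of $\varphi_0$ together with $1/s\in (\R^\bd_\loc)^+$ gives $\varphi_0(1/s)\ge 0$, hence $\varphi_0(1/s)>0$, i.e.\ $\varphi_0\in \Open_{s<\infty}$.

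The main conceptual obstacle is the ambiguity in how $\widehat s$ is realised in $\AlEv(\KSpace(\R))$: a priori a representative could be defined on a dense open set strictly larger than $\Open_{s<\infty}$. Resolving this relies on the rigidity coming from the identity $f\cdot\varphi(1/s)=1$ on a dense subset together with continuity, which forces the quantity $\varphi(1/s)$ to remain nonzero throughout the domain of any representative. Once this rigidity is established the two inclusions are immediate, and no appeal to Theorem~\ref{theorem:positivity} or to the archimedean hypothesis is needed for this particular lemma (localizability alone supplies the representative $s_s$).
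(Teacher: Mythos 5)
Your proof is correct, but the reverse inclusion is argued by a genuinely different (and arguably slicker) mechanism than the paper's. The paper fixes the exhausting open sets $U_n \coloneqq \set{\varphi\in\KSpace(\R)}{\varphi(1/s)<1/n}$, uses Proposition~\ref{proposition:dense} to see that $U_n\cap{\dom\widehat s_{\max}}\cap\Open_{s<\infty}$ is dense in $U_n\cap{\dom\widehat s_{\max}}$, deduces $\widehat s_{\max}\ge n$ there from $\widehat s_{\max}=s_s=\varphi(1/s)^{-1}>n$ on the dense part, and concludes that $\bigcap_n(U_n\cap{\dom\widehat s_{\max}})=\emptyset$, i.e.\ ${\dom\widehat s_{\max}}\subseteq\Open_{s<\infty}$. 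You instead exploit the globally defined continuous function $\varphi\mapsto\varphi(1/s)$ and the identity $\widehat s_{\max}(\varphi)\,\varphi(1/s)=1$, which holds on a dense open set by $\widehat s_{\max}\approx s_s$ and hence, by continuity, on all of ${\dom\widehat s_{\max}}$; this forces $\varphi(1/s)\neq 0$, and positivity upgrades this to $\varphi(1/s)>0$. Both arguments are instances of ``propagate information from the dense overlap by continuity''; yours avoids the quantitative $U_n$-exhaustion entirely, while the paper's avoids invoking the full $\approx$-relation and works directly with the restriction property of $\widehat s_{\max}$ from Proposition~\ref{proposition:maxDomain} (which you could also have used to shortcut the extraction of your set $B$).

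One caveat: your closing claim that ``no appeal to Theorem~\ref{theorem:positivity} or to the archimedean hypothesis is needed'' overstates the independence. The object $\widehat s_{\max}$ only exists after $\widehat s$ has been identified with an element of $\AlEv\bigl(\KSpace(\R)\bigr)$ via Corollary~\ref{corollary:dense}, and that identification (in particular the fact that $\Open_{s<\infty}$ is a dense open subset of $\KSpace(\R)$, so that $s_s$ is even a legitimate representative and the relation $f\approx s_s$ you use yields a dense open $B$) rests on Proposition~\ref{proposition:dense}, whose proof uses both the archimedean property and Theorem~\ref{theorem:positivity}. So these hypotheses are not avoided; they are merely absorbed into the setup rather than cited inside the proof, whereas the paper cites Proposition~\ref{proposition:dense} explicitly.
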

\begin{proof}
  Clearly $s/s = 1/1 \in \R^\bd_\loc,$ so $s_s \in \Stetig(\Open_{s<\infty})$ is a representative
  of $\widehat s,$ and $\Open_{s<\infty} \subseteq \dom \widehat s_{\max}.$
  Conversely, for $n\in \NN$ set $U_n \coloneqq \set{\varphi \in \KSpace(\R)}{\varphi(1/s) < 1/n }.$
  Then $U_n \cap {\dom \widehat s_{\max}} \cap \Open_{s<\infty}$ is dense in $U_n \cap {\dom \widehat s_{\max}}$
  because $\Open_{s<\infty}$ is dense in $\KSpace(\R)$ by Proposition~\ref{proposition:dense} and
  because $U_n \cap {\dom \widehat s_{\max}}$ is open in $\KSpace(\R).$
  Moreover, for all $\varphi \in U_n \cap {\dom \widehat s_{\max}} \cap \Open_{s<\infty}$ the estimate
  $\widehat s_{\max}(\varphi) = s_s(\varphi) = \varphi(1/s)^{-1} > n$ holds, and consequently
  $\widehat{s}_{\max}(\varphi) \ge n$ for all $\varphi \in U_n \cap {\dom \widehat s_{\max}}$
  by continuity of $\widehat{s}_{\max}$.
  Clearly $\bigcap\nolimits_{n\in \NN} U_n = \KSpace(\R) \setminus \Open_{s<\infty}$ and so it follows that
  \begin{equation*}
    \bigl( \KSpace(\R) \setminus \Open_{s<\infty} \bigr) \cap {\dom \widehat s_{\max}}
    =
    \Bigl( \bigcap\nolimits_{n\in \NN} U_n \Bigr) \cap {\dom \widehat s_{\max}}
    =
    \bigcap\nolimits_{n\in \NN} \bigl( U_n \cap {\dom \widehat s_{\max}} \bigr)
    =
    \emptyset
    .
  \end{equation*}
  This shows that ${\dom \widehat s_{\max}} \subseteq \Open_{s<\infty}.$
\end{proof}

\begin{proposition} \label{proposition:extendedCharakterEvaluation}
  Let $\R$ be a localizable archimedean partially ordered commutative ring and consider $\varphi \in \KSpace(\R).$
  Then $\dom \widehat\varphi$ is a subring of $\R$ and ${\widehat\varphi} \colon {\dom \widehat\varphi} \to \RR$ is a positive ring morphism.
  Moreover, if $r\in \R$ and $s\in {\dom \widehat\varphi} \cap \Loc(\R)$ fulfil $r/s \in \R^\bd_\loc,$ then also $r\in {\dom \widehat\varphi}.$
\end{proposition}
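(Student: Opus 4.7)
The plan is to transport the ring-morphism property of $\widehat{\argument} \colon \R \to \AlEv(\KSpace(\R))$ to the pointwise setting by tracking how the maximal domains behave under the ring operations. The key observation (implicit in Proposition~\ref{proposition:maxDomain}) is that if $f \in a$ and $g \in b$ are any continuous representatives of $a,b \in \AlEv(\KSpace(\R)),$ then $f+g$ (defined on the open set $\dom f \cap \dom g$) represents $a+b,$ and hence $(a+b)_{\max}$ has domain containing $\dom f \cap \dom g$ and agrees there with $f+g.$ The analogous fact holds for products. Applied to $f = \widehat r_{\max}$ and $g = \widehat s_{\max},$ this yields $\dom \widehat{r+s}_{\max}, \dom \widehat{rs}_{\max} \supseteq \dom \widehat r_{\max} \cap \dom \widehat s_{\max},$ together with the pointwise identities $\widehat{r+s}_{\max} = \widehat r_{\max} + \widehat s_{\max}$ and $\widehat{rs}_{\max} = \widehat r_{\max} \widehat s_{\max}$ on this intersection.

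From this I immediately read off that $\dom \widehat\varphi$ is closed under addition and multiplication and that $\widehat\varphi$ respects both operations. Since $\widehat 1$ is represented by the constant $\Unit$ with full domain $\KSpace(\R),$ and similarly for $0,$ I get $0,1 \in \dom \widehat\varphi$ with $\widehat\varphi(0) = 0$ and $\widehat\varphi(1) = 1.$ Closure under negation follows from $\widehat{-r}_{\max} = -\widehat r_{\max}$ having the same maximal domain as $\widehat r_{\max}.$ Hence $\dom \widehat\varphi$ is a unital subring of $\R$ and $\widehat\varphi$ is a ring morphism to $\RR.$

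For positivity, let $r \in \R^+ \cap \dom \widehat\varphi.$ By Proposition~\ref{proposition:widehat}, $\widehat r \in \Stetig_\approx(\LocDomains(\R))^+,$ so some representative $r_s$ is pointwise nonnegative on some $\Open_{s<\infty} \in \LocDomains(\R).$ By Proposition~\ref{proposition:dense}, $\Open_{s<\infty}$ is dense in $\KSpace(\R),$ hence dense in the open subset $\dom \widehat r_{\max}.$ Since $\widehat r_{\max}$ is continuous and agrees with $r_s \ge 0$ on this dense subset of its domain, continuity forces $\widehat r_{\max} \ge 0$ on all of $\dom \widehat r_{\max},$ and in particular $\widehat\varphi(r) = \widehat r_{\max}(\varphi) \ge 0.$ This is the delicate point of the proof and the main obstacle: being positive in $\AlEv(\KSpace(\R))$ only a priori gives pointwise positivity on some dense open subset, not on the entire maximal domain; the density of $\Open_{s<\infty}$ from Proposition~\ref{proposition:dense} plus continuity is exactly what bridges this gap.

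For the final claim, if $r \in \R$ and $s \in \dom \widehat\varphi \cap \Loc(\R)$ satisfy $r/s \in \R^\bd_\loc,$ then Lemma~\ref{lemma:equivalentReps} provides a continuous representative $r_s \colon \Open_{s<\infty} \to \RR$ of $\widehat r.$ Consequently $\dom \widehat r_{\max} \supseteq \Open_{s<\infty},$ and by Lemma~\ref{lemma:extendedCharakterEvaluation} the latter equals $\dom \widehat s_{\max}.$ Since $s \in \dom \widehat\varphi$ means $\varphi \in \dom \widehat s_{\max},$ I conclude $\varphi \in \dom \widehat r_{\max},$ i.e.~$r \in \dom \widehat\varphi.$
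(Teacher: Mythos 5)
Your proposal is correct and follows essentially the same route as the paper: closure of $\dom\widehat\varphi$ under the ring operations via the fact that sums/products of the maximal representatives on the intersection of their domains represent $\widehat{r+s}$ and $\widehat{rs}$, positivity via a nonnegative representative whose domain (an element of $\LocDomains(\R)$, dense by Proposition~\ref{proposition:dense}) is dense in $\dom\widehat r_{\max}$ combined with continuity, and the last claim via $\Open_{s<\infty}=\dom\widehat s_{\max}$ from Lemma~\ref{lemma:extendedCharakterEvaluation}. The only difference is cosmetic: you phrase the positivity step inside $\Stetig_\approx(\LocDomains(\R))$ and invoke Proposition~\ref{proposition:dense} directly, while the paper works in the $\AlEv(\KSpace(\R))$ picture where representatives already have dense domains.
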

\begin{proof}
  Clearly $1 \in {\dom \widehat \varphi}$ because $\widehat 1_{\max}$ is the constant $1$-function on $\KSpace(\R),$
  and $\widehat\varphi(1) = \widehat 1_{\max}(\varphi) = 1$.
  Consider $r,s \in {\dom \widehat \varphi}$ and write $A \coloneqq \dom \widehat r_{\max},$ $B \coloneqq \dom \widehat s_{\max},$
  then $\varphi \in A \cap B.$ Note that
  $\widehat{r}_{\max}\at{A \cap B} + \widehat{s}_{\max}\at{A\cap B} \in \Stetig(A\cap B)$ is a representative of
  $\widehat{r+s} = \widehat{r}+\widehat{s},$ and $\widehat{r}_{\max}\at{A \cap B} \widehat{s}_{\max}\at{A\cap B} \in \Stetig(A\cap B)$
  is a representative or $\widehat{rs} = \widehat{r}\widehat{s}.$ This shows that $A \cap B \subseteq \dom (\widehat{r+s})_{\max}$
  and $A \cap B \subseteq \dom (\widehat{rs})_{\max},$ so $r+s \in \dom \widehat\varphi$ and $rs \in \dom \widehat\varphi.$
  It also follows that 
  \begin{align*}
    \widehat{\varphi}(r+s)
    &=
    (\widehat{r+s})_{\max}(\varphi)
    =
    \bigl(\widehat{r}_{\max}\at{A \cap B} + \widehat{s}_{\max}\at{A\cap B}\bigr)(\varphi)
    =
    \widehat{r}_{\max}(\varphi) + \widehat{s}_{\max}(\varphi)
    =
    \widehat{\varphi}(r) + \widehat{\varphi}(s)
  \shortintertext{and}
    \widehat{\varphi}(rs)
    &=
    (\widehat{rs})_{\max}(\varphi)
    =
    \bigl(\widehat{r}_{\max}\at{A \cap B}  \widehat{s}_{\max}\at{A\cap B}\bigr)(\varphi)
    =
    \widehat{r}_{\max}(\varphi)  \widehat{s}_{\max}(\varphi)
    =
    \widehat{\varphi}(r) \widehat{\varphi}(s)
    .
  \end{align*}
  So $\dom \widehat\varphi$ is a subring of $\R$ and ${\widehat\varphi} \colon {\dom \widehat\varphi} \to \RR$ is a ring morphism.
  
  Next consider $r\in ({\dom\widehat\varphi})^+ = ({\dom\widehat\varphi}) \cap \R^+,$ then $\widehat r \in \AlEv(\KSpace(\R))^+$
  because the extended Gelfand transformation is positive by Proposition~\ref{proposition:widehat},
  so there exists a representative $f\in \widehat r$ that is pointwise positive on its domain $\dom f.$
  As $\dom f$ is a dense open subset of $\KSpace(\R)$ it is also dense in $\dom \widehat r_{\max},$ so 
  $\widehat r_{\max}(\psi) \ge 0$ for all $\psi \in \dom \widehat r_{\max}$ by continuity of $\widehat r_{\max}$
  and in particular $\widehat\varphi(r) = \widehat r_{\max}(\varphi) \ge 0.$
  
  Finally, assume two elements $r\in \R$ and $s\in {\dom \widehat\varphi} \cap \Loc(\R)$ fulfil $r/s \in \R^\bd_\loc.$
  Then $r_s \in \Stetig(\Open_{s<\infty})$ is a representative of $\widehat r$ and in particular $\Open_{s<\infty} \subseteq {\dom \widehat r_{\max}}.$
  The previous Lemma~\ref{lemma:extendedCharakterEvaluation} shows that
  $\Open_{s<\infty} = {\dom \widehat s_{\max}},$ so $\varphi \in {\dom \widehat s_{\max}} \subseteq {\dom \widehat r_{\max}},$
  i.e.~$r \in \dom \widehat\varphi.$
\end{proof}

\section{Applications} \label{sec:app}
In the $\sigma$-bounded or lattice-ordered case, the main Theorem~\ref{theorem:main} provides new proofs of known results.
The applications to partially ordered fields and to commutative operator algebras seem to be new.

\subsection{The \texorpdfstring{$\sigma$}{sigma}-bounded case}
We adapt \cite[Def.~3.1]{schoetz:gelfandNaimarkTheorems} to partially ordered commutative rings:
\begin{definition}
  A partially ordered commutative ring is \emph{$\sigma$-bounded} if there exists a sequence $(s_n)_{n\in \NN}$ in $\R^+$
  such that for all $r\in \R$ there is $n\in \NN$ for which $-s_n \le r \le s_n$ holds.
\end{definition}
Recall the construction of the maximal domain $\dom f_{\max} \subseteq X$ of an element $f \in \AlEv(X)$ from Proposition~\ref{proposition:maxDomain}
for $X$ any topological space.

\begin{theorem} \label{theorem:sigma}
  Let $\R$ be a $\sigma$-bounded localizable archimedean partially ordered commutative ring, then  
  $\KSpaceFin(\R) \coloneqq \bigcap_{r\in \R} {\dom \widehat r_{\max}}$
  is a dense subset of $\KSpace(\R)$ and the map $\widehatFin{\argument} \colon \R \to \Stetig\bigl( \KSpaceFin(\R) \bigr),$
  \begin{equation*}
    r \mapsto \widehatFin{r} \coloneqq \widehat r_{\max}\at[\big]{\KSpaceFin(\R)}
  \end{equation*}
  is a positive ring morphism and an order embedding.
\end{theorem}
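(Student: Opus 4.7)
The plan is to reduce everything to Baire category applied in the compact Hausdorff space $\KSpace(\R)$, combined with the ``extension by density'' principle for $\widehat{r}_{\max}$.

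First I would make a standard reduction: since $\R$ is $\sigma$-bounded with bounding sequence $(s_n)_{n\in \NN}$, and since $\R$ is localizable, I can replace each $s_n$ by some $s_n' \in \Loc(\R)$ with $s_n \le s_n'$. This preserves the bounding property, so for every $r\in\R$ there is $n\in\NN$ with $-s_n' \le r \le s_n'$, and in particular $r/s_n' \in \R^\bd_\loc$. By Lemma~\ref{lemma:extendedCharakterEvaluation}, $\Open_{s_n'<\infty} = \dom \widehat{s_n'}_{\max}$, and by Proposition~\ref{proposition:extendedCharakterEvaluation} the implication ``$s_n' \in \dom \widehat\varphi$ and $r/s_n' \in \R^\bd_\loc$ imply $r\in \dom\widehat\varphi$'' gives $\Open_{s_n'<\infty} \subseteq \dom \widehat{r}_{\max}$ for every $r$ bounded by $s_n'$. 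Taking the intersection over $n$ and using that the $R_n \coloneqq \{r : -s_n' \le r \le s_n'\}$ cover $\R$ yields
\begin{equation*}
  \bigcap_{n\in\NN} \Open_{s_n'<\infty} \;\subseteq\; \bigcap_{r\in \R} \dom \widehat{r}_{\max} \;=\; \KSpaceFin(\R).
\end{equation*}

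For density, I would now invoke Baire: by Proposition~\ref{proposition:dense} each $\Open_{s_n'<\infty}$ is open and dense in $\KSpace(\R)$, and by Theorem~\ref{theorem:positivity} the space $\KSpace(\R)$ is compact Hausdorff, hence a Baire space. So the countable intersection above is dense, and a fortiori $\KSpaceFin(\R)$ is dense in $\KSpace(\R)$.

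For the map itself, observe that by construction $\KSpaceFin(\R) \subseteq \dom \widehat{r}_{\max}$ for every $r\in\R$, so $\widehatFin{r}$ is the restriction of a continuous $\RR$-valued function and therefore belongs to $\Stetig(\KSpaceFin(\R))$. That $\widehatFin{\argument}$ is a positive ring morphism is essentially a pointwise restatement of Proposition~\ref{proposition:extendedCharakterEvaluation}: for each $\varphi \in \KSpaceFin(\R)$ one has $\dom\widehat\varphi = \R$ and $\widehat\varphi$ is a positive ring morphism, and $\widehatFin{r}(\varphi) = \widehat\varphi(r)$.

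The order-embedding part is the step where the density really gets used. Suppose $\widehatFin{r} \ge 0$ pointwise on $\KSpaceFin(\R)$, i.e.~$\widehat{r}_{\max}(\varphi)\ge 0$ for all $\varphi$ in the dense subset $\KSpaceFin(\R)$ of $\KSpace(\R)$. Since $\KSpaceFin(\R) \subseteq \dom \widehat{r}_{\max}$ and the latter is open in $\KSpace(\R)$, $\KSpaceFin(\R)$ is also dense in $\dom \widehat{r}_{\max}$; continuity of $\widehat{r}_{\max}$ then forces $\widehat{r}_{\max} \ge 0$ on all of its domain, i.e.~$\widehat r \in \Stetig_\approx(\LocDomains(\R))^+$. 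Since $\R$ is archimedean and localizable, Theorem~\ref{theorem:main} tells us that the extended Gelfand transformation is an order embedding, so $r\in \R^+$ as required. The only subtle point worth double-checking is the reduction of the bounding sequence to localizable elements at the very start — everything else is essentially Baire plus continuous extension.
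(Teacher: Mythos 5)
Your proposal is correct and takes essentially the same route as the paper: the same replacement of the $\sigma$-bounding sequence by elements of $\Loc(\R)$, the same Baire-category argument in the compact Hausdorff space $\KSpace(\R)$ to get density of $\KSpaceFin(\R)$, and Proposition~\ref{proposition:extendedCharakterEvaluation} for the positive ring-morphism part. Your order-embedding step is just the direct form of the paper's contrapositive density-plus-continuity argument, again relying on Theorem~\ref{theorem:main}.
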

\begin{proof}
  By Proposition~\ref{proposition:extendedCharakterEvaluation} all the maps $\R \ni r \mapsto \widehatFin{r}(\varphi) = \widehat \varphi(r) \in \RR$
  with $\varphi \in \KSpaceFin(\R)$ are positive ring morphisms, so $\widehatFin{\argument} \colon \R \to \Stetig\bigl( \KSpaceFin(\R) \bigr)$
  is a positive ring morphism.
  
  By $\sigma$-boundedness of $\R$ there is a sequence $(s'_n)_{n\in \NN}$ in $\R^+$
  with the property that for all $r\in \R$ there exists $n\in \NN$ for which $-s'_n \le r \le s'_n$ holds.
  By localizability of $\R,$ for every $n\in \NN$ there is $s_n \in \Loc(\R)$ satisfying $s_n' \le s_n.$
  It follows that for all $r\in \R$ there is $n\in \NN$ such that $r / s_n \in \R^\bd_\loc,$ hence $\dom \widehat r_{\max} \supseteq\dom \bigl((\widehat {s_n})_{\max}\bigr)$
  by Proposition~\ref{proposition:extendedCharakterEvaluation}, and therefore
  $\KSpaceFin(\R) = \bigcap_{r\in \R} {\dom \widehat r_{\max}} = \bigcap_{n\in \NN} {\dom \bigl((\widehat {s_n})_{\max}\bigr)}.$
  As $\KSpace(\R)$ is a compact Hausdorff space by Theorem~\ref{theorem:positivity} and in particular a Baire space,
  and as $\dom \bigl( (\widehat {s_n})_{\max}\bigr)$ is a dense open subset of $\KSpace(\R)$ for all $n\in \NN$, it follows that $\KSpaceFin(\R)$ is
  dense in $\KSpace(\R).$
  
  Now consider $r\in \R \setminus \R^+,$ then $\widehat r \in \AlEv\bigl(\KSpace(\R)\bigr) \setminus \AlEv\bigl(\KSpace(\R)\bigr)^+$
  because the extended Gelfand transformation $\widehat{\argument}$ is an order embedding by Theorem~\ref{theorem:main}.
  So $\set{\varphi \in {\dom \widehat r_{\max}}}{\widehat r_{\max}(\varphi) < 0}$ is a non-empty open subset of $\KSpace(\R)$
  and has non-empty intersection with $\KSpaceFin(\R)$ by denseness of $\KSpaceFin(\R)$ in $\KSpace(\R).$
  Therefore $\widehat{r}_{\max} \at{\KSpaceFin(\R)} \in \Stetig\bigl(\KSpaceFin(\R)\bigr) \setminus \Stetig\bigl(\KSpaceFin(\R)\bigr)^+.$
  This shows that the positive ring morphism $\widehatFin{\argument} \colon \R \to \Stetig\bigl( \KSpaceFin(\R) \bigr)$ is an order embedding.
\end{proof}
This representation theorem for $\sigma$-bounded localizable archimedean partially ordered commutative rings is essentially
\cite[Thm.~4.24]{schoetz:gelfandNaimarkTheorems}.

\subsection{The lattice-ordered case}
A \emph{commutative $f$-ring} is a partially ordered commutative ring $\R$ such that for all $r,s\in \R$ the supremum
$r \vee s \coloneqq \sup\{r,s\} \in \R$ and infimum $r \wedge s \coloneqq \inf\{r,s\} \in \R$ exist (i.e.~$\R$ is lattice-ordered)
and such that the following condition is fulfilled: Whenever $r,s\in \R$ fulfil $r\wedge s = 0,$ then $(rt) \wedge s = 0$
for all $t\in \R^+.$ Note that, in contrast to the original definition \cite[Sec.~8]{birkhoff.pierce:latticeOrderedRings},
we here require rings to have a unit. Moreover, by Eq.~(18) in \cite[Sec.~8]{birkhoff.pierce:latticeOrderedRings},
the axiom of positivity of squares is actually redundant for $f$\=/rings.

\begin{theorem}
  Let $\R$ be an archimedean commutative $f$-ring (with multiplicative unit), then $\R$ is localizable and the extended
  Gelfand transformation $\widehat{\argument} \colon \R \to \AlEv\bigl( \KSpace(\R) \bigr)$
  is a positive ring morphism and an order embedding.
\end{theorem}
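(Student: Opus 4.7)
The extended Gelfand transformation $\widehat{\argument}$ is always a positive ring morphism by Proposition~\ref{proposition:widehat}, and once localizability of $\R$ is established, Theorem~\ref{theorem:main}, implication \refitem{item:main:archloc}~$\Rightarrow$~\refitem{item:main:orderembed}, together with Corollary~\ref{corollary:dense}, immediately yields the remaining claims and places the image in $\AlEv\bigl(\KSpace(\R)\bigr)$. So the whole task reduces to proving that $\R$ is localizable. My plan is in fact to prove the stronger statement $\Loc(\R) = 1+\R^+$. Since squares are positive in every $f$\=/ring (as noted just before the theorem), this gives strong localizability, and ordinary localizability then follows from Proposition~\ref{proposition:weakstronloc}.

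To prove $1+\R^+ \subseteq \Loc(\R)$, fix $s \in 1+\R^+$ and $r \in \R$ with $rs \in \R^+$; the goal is $r \in \R^+$. Using the standard decomposition in lattice-ordered abelian groups, write $r = r^+ - r^-$ with $r^\pm \coloneqq (\pm r) \vee 0 \in \R^+$ and $r^+ \wedge r^- = 0$. The $f$\=/ring axiom applied to this disjointness with the positive multiplier $s$ gives $(r^+ s) \wedge r^- = 0$; applying the axiom a second time with the two elements swapped (which is legitimate because $\wedge$ is commutative and $\R$ is a commutative ring) yields $(r^+ s) \wedge (r^- s) = 0$. Since $r^+ s$ and $r^- s$ are disjoint elements of $\R^+$ whose difference is $rs$, the usual identification of positive and negative parts gives $r^- s = (rs)^-$, and the hypothesis $rs \in \R^+$ forces $r^- s = 0$. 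Finally, $s - 1 \in \R^+$ and $r^- \in \R^+$ give $r^-(s-1) \in \R^+$, i.e.\ $r^- \le r^- s = 0$, hence $r^- = 0$ and $r = r^+ \in \R^+$.

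Having established $\Loc(\R) = 1+\R^+$, the proof concludes by the chain of implications described in the first paragraph. The only nontrivial ingredient is the two-step application of the $f$\=/ring axiom in the second paragraph together with the standard identification of $r^\pm$ with the positive and negative parts of $r$, and I do not anticipate a serious obstacle there. In fact, the argument essentially explains why the $f$\=/ring axiom is the natural condition to impose: it is exactly what is needed so that multiplication by a positive element preserves the disjointness of positive and negative parts, which is precisely the property that makes every element of $1+\R^+$ localizable.
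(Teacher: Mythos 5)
Your proposal is correct, and it follows the same overall architecture as the paper (establish strong localizability of the $f$\=/ring, then invoke Theorem~\ref{theorem:main} together with Corollary~\ref{corollary:dense}); the difference is in how the key input is obtained. The paper simply cites \cite[Prop.~26]{schoetz:arxivAssociativityAndCommutativityOfPartiallyOrderedRings} for the fact that $f$\=/rings are strongly localizable, whereas you prove $1+\R^+ \subseteq \Loc(\R)$ directly and self-containedly: the decomposition $r = r^+ - r^-$ with $r^+ \wedge r^- = 0$, the two applications of the $f$\=/ring axiom to get $(r^+s)\wedge(r^-s)=0$, the identification $r^-s = (rs)^-$ (valid since disjoint positive elements whose difference is $rs$ must be its positive and negative parts, using $a\vee b = a+b-a\wedge b$), and finally $r^- \le r^-s = 0$ from $r^-(s-1)\in\R^+$ — all steps check out, including the implicit facts that $1+\R^+\subseteq\R^+$ and that $\R^+$ is closed under multiplication. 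Your argument buys a proof that is readable without consulting the external reference and makes transparent exactly why the $f$\=/ring axiom is the right hypothesis (positive multipliers preserve disjointness of $r^+$ and $r^-$); the paper's citation buys brevity and covers the possibly more general setting treated in that reference. One small bookkeeping point: strong localizability also requires $r^2\in\R^+$, which you correctly note is automatic (and is in any case part of the paper's definition of a commutative $f$\=/ring as a partially ordered commutative ring), and $\Loc(\R)\subseteq 1+\R^+$ holds by definition, so your inclusion indeed gives $\Loc(\R)=1+\R^+$ as claimed.
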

\begin{proof}
  In \cite[Prop.~26]{schoetz:arxivAssociativityAndCommutativityOfPartiallyOrderedRings} it was shown that all $f$-rings 
  (in particular commutative $f$-rings with multiplicative unit) are strongly localizable, so Theorem~\ref{theorem:main} applies.
\end{proof}
This representation theorem for (unital) archimedean commutative $f$-rings is essentially \cite[Thm.~2.3]{henriksen.johnson:structureOfArchimedeanLatticeOrderedAlgebras}.

\subsection{Partially ordered fields}
We define a \emph{partially ordered field} as a partially ordered commutative ring $\FF$ in which every element $f\in \FF \setminus \{0\}$
has a multiplicative inverse $f^{-1}.$ Note that $f^{-1} = (f^{-1})^2 f \in \FF^+$ for all $f\in \FF^+ \setminus \{0\}.$
As $-1 \notin \FF^+$ because $1 \in \FF^+,$ this in particular means that $(1+f)^{-1} \in \FF^+$ exists for all $f\in \FF^+,$
and therefore every partially ordered field is strongly localizable. Applying the main Theorem~\ref{theorem:main} yields:

\begin{theorem}
  Let $\FF$ be an archimedean partially ordered field, then $\FF$ is localizable and the extended
  Gelfand transformation $\widehat{\argument} \colon \FF \to \AlEv\bigl( \KSpace(\FF) \bigr)$ is a positive
  ring morphism and an order embedding.
\end{theorem}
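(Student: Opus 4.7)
The plan is to reduce this statement to a direct application of the main Theorem~\ref{theorem:main}, since the key structural observation has already been made in the paragraph preceding the statement. What needs to be assembled is: (a) a verification that $\FF$ is strongly localizable, (b) deduction of localizability, and (c) invocation of the main theorem together with Corollary~\ref{corollary:dense}.

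For step (a) I would first note that for any $f \in \FF^+$ the element $s \coloneqq 1+f \in 1+\FF^+$ is nonzero: if $s = 0$ then $-1 = f \in \FF^+$, contradicting $1 \in \FF^+$ together with antisymmetry of the partial order on a nontrivial partially ordered ring. Hence $s^{-1}$ exists in $\FF$, and the identity $s^{-1} = (s^{-1})^2 s$ exhibits $s^{-1}$ as a product of two elements of $\FF^+$ (the square is positive, and $s = 1+f \in \FF^+$), so $s^{-1} \in \FF^+$. Now given $r \in \FF$ with $rs \in \FF^+$, we have $r = (rs) s^{-1} \in \FF^+ \cdot \FF^+ \subseteq \FF^+$. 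This shows $s \in \Loc(\FF)$, and since $s$ was arbitrary in $1+\FF^+$, we conclude $\Loc(\FF) = 1+\FF^+$, which combined with the positivity of squares (built into the definition of a partially ordered commutative ring) is exactly strong localizability.

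For step (b), Proposition~\ref{proposition:weakstronloc} immediately upgrades this to localizability. For step (c), $\FF$ is now archimedean (by hypothesis) and localizable, so Theorem~\ref{theorem:main}, specifically the implication \refitem{item:main:archloc}~$\implies$~\refitem{item:main:orderembed}, tells us that the extended Gelfand transformation $\widehat{\argument} \colon \FF \to \Stetig_\approx\bigl(\LocDomains(\FF)\bigr)$ is a positive ring morphism and an order embedding. Corollary~\ref{corollary:dense} (as also restated in the final clause of Theorem~\ref{theorem:main}) allows us to regard its codomain as a subring of $\AlEv\bigl(\KSpace(\FF)\bigr)$, yielding the claimed order embedding $\widehat{\argument} \colon \FF \to \AlEv\bigl(\KSpace(\FF)\bigr)$.

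Since the entire argument is an assembly of previously proven results, there is no substantive obstacle; the only point requiring a little care is the verification that $s = 1+f$ is nonzero so that $s^{-1}$ makes sense, which uses only $1 \in \FF^+$ and $-1 \notin \FF^+$. No further delicate estimates or constructions are needed.
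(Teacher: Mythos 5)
Your proposal is correct and follows essentially the same route as the paper: the paper likewise observes that $(1+f)^{-1} = \bigl((1+f)^{-1}\bigr)^2(1+f) \in \FF^+$ exists for all $f \in \FF^+$ (using $-1 \notin \FF^+$), concludes that every partially ordered field is strongly localizable, and then applies Theorem~\ref{theorem:main} together with Corollary~\ref{corollary:dense}. Your only addition is spelling out explicitly the deduction $rs \in \FF^+ \Rightarrow r = (rs)s^{-1} \in \FF^+$, which the paper leaves implicit.
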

A typical example of a partially ordered field that is a subring of the partially ordered commutative ring $\AlEv(X)$
for some compact Hausdorff space $X$ is the field of rational functions on $\RR$ (or on the $1$-point compactification of $\RR$).

\subsection{Commutative operator algebras}
Consider a dense linear subspace $\Dom$ of a complex Hilbert space $\Hilb,$ with inner product denoted by $\skal{\argument}{\argument} \colon \Hilb\times\Hilb \to\CC$
(antilinear in the first, linear in the second argument). An \emph{adjointable endomorphism} of $\Dom$ is a (necessarily linear)
map $a\colon \Dom \to \Dom$ for which there exists another (necessarily linear) map $a^* \colon \Dom \to \Dom$ such that
$\skal{\phi}{a(\psi)} = \skal{a^*(\phi)}{\psi}$ holds for all $\phi,\psi \in \Dom.$ This map $a^*,$ if it exists, is uniquely determined, adjointable,
and $(a^*)^* = a.$ We write $\Adbar(\Dom)$ for the set of all adjointable endomorphisms of $\Dom,$ then $\Adbar(\Dom)$
with pointwise addition, pointwise scalar multiplication, and composition as multiplication, is a complex algebra with unit
$\Unit \coloneqq \id_\Dom \colon \Dom \to \Dom,$ $\phi \mapsto \id_\Dom(\phi) \coloneqq \phi.$
The map $\argument^* \colon \Adbar(\Dom) \to \Adbar(\Dom),$ $a\mapsto a^*$ is an antilinear involution fulfilling $(ab)^* = b^* a^*$
for all $a,b\in \Adbar(\Dom).$ In other words, $\Adbar(\Dom)$ is a $^*$\=/algebra. A \emph{$^*$\=/subalgebra} of $\Adbar(\Dom)$
is a subalgebra $\A$ of $\Adbar(\Dom)$ with $\Unit \in \A$ that is stable under the $^*$\=/involution, i.e.\ $a^* \in \A$
for all $a\in \A.$

Now let $\A$ be a commutative $^*$\=/subalgebra of $\Adbar(\Dom),$ then its space of \emph{hermitian elements} $\A_\hermitian \coloneqq \set{a\in \A}{a^* = a}$
is a commutative ring (even a commutative real algebra) and carries a canonical translation-invariant partial order $\le,$
the \emph{operator order}: for $a,b\in \A_\hermitian,$ the operator order is given by
\begin{equation}
  a \le b\quad\quad :\Longleftrightarrow \quad\quad \skal{\phi}{a(\phi)} \le \skal{\phi}{b(\phi)} \text{ for all $\phi\in \Dom.$}
\end{equation}
This operator order is indeed an antisymmetric relation as a consequence of the polarization identity
$\skal{\phi}{a(\psi)} = \frac{1}{4} \sum_{k=0}^3 \I^{-k} \skal{\phi+\I^k \psi}{a(\phi+\I^k \psi)}$ for $\phi,\psi \in \Dom$
and $a\in \Adbar(\Dom).$ Clearly $\Unit \in \A_\hermitian^+$ and $a^2 b = a^* b a \in \A_\hermitian^+$ for $a\in \A_\hermitian,$ $b\in \A_\hermitian^+$,
so $\A_\hermitian^+$ is stable under multiplication with squares as in Proposition~\ref{proposition:sqrt}.
Moreover, $\A_\hermitian$ is archimedean:
If $a,b\in \A_\hermitian$ fulfil $ka + b \in \A_\hermitian^+$ for all $k\in \NN,$ then
\begin{equation}
  \skal{\phi}{a(\phi)} = \skal{\phi}{a(\phi)} +  \lim_{k\to\infty} \frac{\skal{\phi}{b(\phi)}}{k} = \lim_{k\to\infty}  \frac{\skal{\phi}{(ka+b)(\phi)} }{k} \ge 0
\end{equation}
for all $\phi \in \Dom$ and therefore $a\in \A_\hermitian^+.$ Note that this means that Proposition~\ref{proposition:sqrt}
applies to $\A_\hermitian$ whenever $\A_\hermitian$ is localizable.
Every element of $\A_\hermitian$ is, by definition, a densely defined symmetric (but in general unbounded) operator on the
surrounding Hilbert space $\Hilb.$ It therefore makes sense to ask whether or not an element of $\A_\hermitian$ is (essentially) selfadjoint.

\begin{theorem}
  Let $\Dom$ be a dense linear subspace of a complex Hilbert space $\Hilb$ and $\A$ a commutative $^*$\=/subalgebra of $\Adbar(\Dom).$
  Assume that all elements of $\Unit + \A_\hermitian^+$ are essentially selfadjoint.
  Then $\A_\hermitian$ with the operator order is a localizable archimedean partially ordered commutative ring, and the
  extended Gelfand transformation $\widehat \argument \colon \A_\hermitian \to \AlEv\bigl( \KSpace(\A_\hermitian)\bigr)$
  is a positive ring morphism and an order embedding.
\end{theorem}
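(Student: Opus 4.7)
The preliminary remarks before the statement already establish that $\A_\hermitian$ is archimedean and that $\A_\hermitian^+$ is stable under multiplication with squares. In view of Proposition~\ref{proposition:sqrt}, once I show that $\A_\hermitian$ is localizable, it follows that $\A_\hermitian$ is a bona fide partially ordered commutative ring, and the Main Theorem~\ref{theorem:main} will immediately yield both the archimedean-localizable form of the assumption and the order-embedding property of $\widehat{\argument}$. Thus the entire task reduces to verifying localizability.

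Given $r\in\A_\hermitian$, I take the candidate $s \coloneqq \Unit+r^2 \in \Unit+\A_\hermitian^+.$ The bounds $-s\le r\le s$ follow from the identity $s\pm r = \frac{1}{2}(\Unit\mp r)^2 + \frac{1}{2}(\Unit+r^2)\in\A_\hermitian^+.$ It remains to show $s$ is localizable: suppose $r'\in\A_\hermitian$ satisfies $r's\in\A_\hermitian^+$; I must derive $\skal{\chi}{r'\chi}\ge 0$ for every $\chi\in\Dom.$ Since the algebra is commutative, $r$ and $r'$ commute on $\Dom$ and are symmetric there, and $\Unit+r^2+(r')^2 \in \Unit+\A_\hermitian^+$ is essentially selfadjoint by hypothesis, hence so is $r^2+(r')^2$ on $\Dom.$ I now invoke Nelson's commutativity theorem: for two commuting symmetric operators on a common invariant domain whose sum of squares is essentially selfadjoint, the closures are selfadjoint and strongly commute. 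This gives me $\bar r, \bar{r'}$ selfadjoint and strongly commuting, so $\bar s = \Unit+\bar r^2$ is selfadjoint with $\bar s\ge\Unit,$ and $\bar s^{1/2}, \bar s^{-1/2}$ commute with $\bar{r'}$ in the spectral-calculus sense.

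The key auxiliary element is $T \coloneqq (\Unit+r^2)(\Unit+(r')^2) = \Unit+r^2+(r')^2+(rr')^2,$ which lies in $\Unit+\A_\hermitian^+$ (each summand is a square or $\Unit$) and is therefore essentially selfadjoint by assumption. For $\phi\in\Dom$ a direct computation gives
\begin{equation*}
\skal{\phi}{T\phi} = \|\phi\|^2+\|r\phi\|^2+\|r'\phi\|^2+\|rr'\phi\|^2 = \|\bar s^{1/2}\phi\|^2+\|\bar s^{1/2}r'\phi\|^2,
\end{equation*}
so (extending by continuity, using strong commutation) the form norm of $\bar T^{1/2}$ on its form domain equals $\|\bar s^{1/2}\phi\|^2+\|\bar s^{1/2}\bar{r'}\phi\|^2.$ Because $\Dom$ is an operator core for $\bar T$ it is also a form core for $\bar T^{1/2}.$ For a fixed $\chi\in\Dom,$ set $\psi\coloneqq \bar s^{-1/2}\chi;$ by strong commutation, $\bar s^{1/2}\psi = \chi,$ $\bar{r'}\psi = \bar s^{-1/2}r'\chi,$ and $\bar s^{1/2}\bar{r'}\psi = r'\chi,$ so $\psi$ lies in the form domain of $\bar T^{1/2}.$ Choose $\phi_n\in\Dom$ approaching $\psi$ in this form norm, which produces simultaneously $\phi_n\to\psi,$ $\bar s^{1/2}\phi_n\to\chi,$ and $\bar s^{1/2}r'\phi_n\to r'\chi$ in $\Hilb.$ By the hypothesis, commutativity, and symmetry on $\Dom,$
\begin{equation*}
0 \le \skal{\phi_n}{r's\phi_n} = \skal{r'\phi_n}{s\phi_n} = \skal{\bar s^{1/2}r'\phi_n}{\bar s^{1/2}\phi_n} \xrightarrow{n\to\infty} \skal{r'\chi}{\chi} = \skal{\chi}{r'\chi},
\end{equation*}
giving $\skal{\chi}{r'\chi}\ge 0.$ Hence $r'\in\A_\hermitian^+,$ so $s$ is localizable and $\A_\hermitian$ is localizable.

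The principal obstacle is the application of Nelson's commutativity theorem to obtain strong commutation of $\bar r$ and $\bar{r'}$: essential selfadjointness of $r^2+(r')^2$ on $\Dom$ is exactly the hypothesis that triggers this classical result, but it must be explicitly invoked (and one must be careful that subtracting the bounded selfadjoint operator $\Unit$ preserves essential selfadjointness, and that the resulting closures have a joint spectral measure enabling the spectral manipulations of $\bar s^{\pm 1/2}$). Once this and the form-core argument are in place, the remaining steps are bookkeeping and an appeal to Proposition~\ref{proposition:sqrt} and Theorem~\ref{theorem:main}.
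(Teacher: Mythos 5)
Your argument is correct, but it follows a genuinely different route from the paper. You prove localizability by showing that $s \coloneqq \Unit + r^2$ is itself a localizable element, and for this you invoke Nelson's commutativity theorem (applied to $r,r'$ via the essential selfadjointness of $\Unit + r^2 + (r')^2$), the resulting joint spectral calculus for $\bar r,\bar{r'}$, and a form-core argument for $\bar T$ with $T = (\Unit+r^2)(\Unit+(r')^2)$ to approximate $\bar s^{-1/2}\chi$ from $\Dom$ in the form norm; the limit $\skal{\chi}{r'(\chi)} = \lim_n \skal[\big]{\bar s^{1/2}r'(\phi_n)}{\bar s^{1/2}\phi_n} \ge 0$ then gives $r'\in\A_\hermitian^+$. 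All the spectral-theoretic steps you flag (passing from essential selfadjointness on $\Dom$ to $\bar s = \Unit + \bar r^2$ and $\bar T = \bar s(\Unit+\bar{r'}^2)$, the identity $\norm{\bar T^{1/2}\xi}^2 = \norm{\bar s^{1/2}\xi}^2 + \norm{\bar s^{1/2}\bar{r'}\xi}^2$ on the form domain) do go through, so the proof is sound; only the sign in $s\pm r = \frac12(\Unit\mp r)^2 + \frac12(\Unit+r^2)$ is a harmless typo. The paper's proof is much more elementary: it uses only that an essentially selfadjoint operator $\ge \Unit$ has dense range, and a direct $\epsilon$-estimate on $\skal{\psi - a(\phi)}{(\Unit+b^2)(\psi-a(\phi))}$ to show the contrapositive that $b \notin \A_\hermitian^+$ forces $a^2 b \notin \A_\hermitian^+$; this yields $a^2 \in \Loc(\A_\hermitian)$ for all $a \in \Unit + \A_\hermitian^+$, and dominating localizable elements of the form $(\Unit + (a/2)^2)^2$ then give localizability, after which both proofs conclude identically via Proposition~\ref{proposition:sqrt} and Theorem~\ref{theorem:main}. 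What your route buys is a sharper intermediate statement (the elements $\Unit + r^2$ are themselves localizable, a step in the direction of strong localizability) and a conceptual link to strong commutation; what it costs is substantially heavier machinery — Nelson's theorem and the joint spectral measure — where the paper's self-contained Cauchy--Schwarz argument needs neither.
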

\begin{proof}
  Consider $b\in \A_\hermitian \setminus \A_\hermitian^+$ and $a\in \Unit+\A_\hermitian^+,$ then there exists $\psi \in \Dom$
  such that $\skal{\psi}{b(\psi)} < 0.$ Set
  \begin{align*}
    \epsilon &\coloneqq \min\Bigl\{ 1, \frac{1}{2}\bigl( 1 + \norm{\psi} + \norm{b(\psi)} \bigr)^{-1} \abs[\big]{\skal{\psi}{b(\psi)}} \Bigr\} \in {]0,1]}
  \shortintertext{and}
    s &\coloneqq \bigl(\Unit + b^2\bigr)a \in \Unit+\A^+_\hermitian .
  \end{align*}
  As $s$ is essentially selfadjoint by assumption and $\Unit \le s,$ the image of $s$ is dense in $\Hilb$ and in particular
  there exists $\phi \in \Dom$ such that $\norm{(\Unit+b^2)\psi-s(\phi)} \le \epsilon$ (see e.g.~\cite[Prop.~3.9]{schmuedgen:UnboundedSelfAdjointOperatorsOnHilbertSpace}).
  Then
  \begin{align*}
    \norm[\big]{\psi-a(\phi)}^2 + \norm[\big]{b\bigl(\psi-a(\phi)\bigr)}^2
    &=
    \skal[\big]{\psi-a(\phi)}{ \bigl(\Unit+b^2\bigr) \bigl(\psi-a(\phi)\bigr)}
    \\
    &\le
    \norm[\big]{\psi-a(\phi)} \,\norm[\big]{\bigl(\Unit+b^2\bigr) \bigl(\psi-a(\phi)\bigr)}
    \\
    &\le
    \epsilon \norm[\big]{\psi-a(\phi)}
  \end{align*}
  holds. So, firstly, $\norm{\psi-a(\phi)}^2 \le \epsilon \norm{\psi-a(\phi)}$
  shows that $\norm{\psi-a(\phi)} \le \epsilon,$ and then, secondly, $\norm{b(\psi-a(\phi))}^2 \le \epsilon \norm{\psi-a(\phi)} \le \epsilon^2$
  shows that $\norm{b(\psi-a(\phi))} \le \epsilon.$ It follows that
  \begin{align*}
    \abs[\big]{ \skal{\psi}{b(\psi)} - \skal{\phi}{a^2 b(\phi)} }
    &=
    \abs[\big]{ \skal{\psi}{b(\psi)} - \skal{a(\phi)}{ba(\phi)} }
    \\
    &=
    \abs[\Big]{ \skal[\big]{\psi-a(\phi)}{b(\psi)} + \skal[\big]{a(\phi) - \psi }{b\bigl(\psi - a(\phi)\bigr)} + \skal[\big]{\psi}{b\bigl(\psi - a(\phi)\bigr)} }
    \\
    &\le
    \norm{\psi-a(\phi)} \, \norm{b(\psi)} + \norm{a(\phi) - \psi} \, \norm[\big]{ b\bigl(\psi - a(\phi)\bigr) } + \norm{\psi} \, \norm[\big]{ b\bigl(\psi - a(\phi)\bigr) }
    \\
    &\le
    \epsilon \, \bigl( \norm{b(\psi)}  + \epsilon + \norm{\psi}\bigr)
    \\
    &\le
    \frac{1}{2}\abs[\big]{\skal{\psi}{b(\psi)}}
  \end{align*}
  holds. As $\skal{\psi}{b(\psi)} < 0$ this shows that $\skal{\phi}{a^2 b(\phi)} < 0,$ and in particular $a^2 b \notin \A_\hermitian^+.$
  
  We have thus proven that $a^2 \in \Loc(\A_\hermitian)$ for all $a\in \Unit+\A_\hermitian^+.$ This shows that $\A_\hermitian$
  is localizable: Indeed, given any $a\in \A_\hermitian$ and set $b \coloneqq \Unit + (a/2)^2 \in \Unit+\A_\hermitian^+,$
  then $-b \le a \le b$ because $b+a = (\Unit+a/2)^2$ and $b-a = (\Unit-a/2)^2,$ and then $b\le b + (a/2)^2b = b^2 \in \Loc(\A_\hermitian)$
  and $-b^2 \le a \le b^2$ hold. So $\A_\hermitian$ is archimedean, localizable, and by Proposition~\ref{proposition:sqrt}, a partially 
  ordered commutative ring. Therefore the main Theorem~\ref{theorem:main} applies
  and shows that the extended Gelfand transformation $\widehat \argument \colon \A_\hermitian \to \AlEv\bigl( \KSpace(\A_\hermitian)\bigr)$
  is a positive ring morphism and an order embedding.
\end{proof}


{
	\normalfont\footnotesize

}

\end{onehalfspace}
\end{document}